\setlist[itemize]{noitemsep}
\setlist[enumerate]{noitemsep}
\algrenewcommand\algorithmicrequire{\textbf{Input:}}
\algrenewcommand\algorithmicensure{\textbf{Output:}}
\pgfplotsset{compat=1.15}
\numberwithin{equation}{section}
\theoremstyle{plain}
\newtheorem{theorem}{Theorem}[section]
\newtheorem{proposition}[theorem]{Proposition}
\newtheorem{lemma}[theorem]{Lemma}
\newtheorem{claim}{Claim}
\theoremstyle{definition}
\newtheorem{definition}[theorem]{Definition}
\newtheorem{remark}[theorem]{Remark}
\newtheorem{example}[theorem]{Example}
\newtheorem{question}{Question}
\newtheorem*{questione}{Question}
\newtheorem{subquestion}{Question}
\newtheorem{conjecture}[theorem]{Conjecture}
\newcommand\restr[2]{{
  \left.\kern-\nulldelimiterspace 
  #1 
  \vphantom{\big|} 
  \right|_{#2} 
  }}
\def\mathcenterto#1#2{\mathclap{\phantom{#1}\mathclap{#2}}\phantom{#1}}
\let\old@widetilde\widetilde
\def\widetildeto#1#2{\mathcenterto{#2}{\old@widetilde{\mathcenterto{#1}{#2\,}}}}
\let\old@widehat\widehat
\def\widehatto#1#2{\mathcenterto{#2}{\old@widehat{\mathcenterto{#1}{#2\,}}}}
\newcommand{\size}[1]{\left| #1 \right|} 
\newcommand{\geqhyp}{\preceq}
\newcommand{\minabove}[1]{\textup{max}_{
\scalebox{.7}{\vspace*{.7em}$\,<$}}\!\left(#1\right)}
\newcommand{\minabovehyp}[1]{\textup{max}_{
\scalebox{.7}{\vspace*{.75em}$\,\preceq$}}\!\left(#1\right)}
\newcommand{\minposet}[1]{\textup{max}\left\{#1\right\}}
\newcommand*\closure[1]{\overline{#1}}
\newcommand{\Ccal}{\mathcal{C}}
\newcommand{\Pcal}{\mathcal{P}}
\newcommand{\ABCD}[2][M]{\mathcal{S}_{#2}(#1)}
\newcommand{\vs}{\vspace*{.5em}}
\DeclareMathOperator{\rank}{rk}
\DeclareMathOperator{\CC}{\mathbb{C}}
\DeclareMathOperator{\Dual}{M_{\textup{Fano}}^{\ast}}
\DeclareMathOperator{\K}{M_{3,3}}
\title{\vspace*{-2em} Efficient Algorithms for Maximal Matroid Degenerations and Irreducible Decompositions of Circuit Varieties}
\author[1]{Emiliano Liwski}
\author[2]{Fatemeh Mohammadi}
\author[3]{Rémi Prébet}
\affil[1,2]{\small Department of Mathematics, KU Leuven, Belgium}
\affil[2]{\small Department of Computer Sciences, KU Leuven, Belgium}
\affil[3]{\small Inria, CNRS, ENS de Lyon, Université Claude Bernard Lyon 1, LIP, UMR 5668, 69342, Lyon cedex 07, France}
\date{\vspace*{-.5em}\today\vspace*{-1.5em}}
\begin{document}
\maketitle

\begin{abstract}
\noindent Matroid theory provides a unifying framework for studying dependence across combinatorics, geometry, and applications ranging from rigidity to statistics. In this work, we study circuit varieties of matroids, defined by their minimal dependencies, which play a central role in modeling determinantal varieties, rigidity problems, and conditional independence relations. We introduce an efficient computational strategy for decomposing the circuit variety of a given matroid $M$, based on an algorithm that identifies its maximal degenerations. These degenerations correspond to the largest matroids lying below $M$ in the weak order. Our framework yields explicit and computable decompositions of circuit varieties that were previously out of reach for symbolic or numerical algebra systems. We apply our strategy to several classical configurations, including the Vámos matroid, the unique Steiner quadruple system $S(3,4,8)$, projective and affine planes, the dual of the Fano matroid, and the dual of the graphic matroid of $K_{3,3}$. In each case, we successfully compute the minimal irreducible decomposition of their circuit varieties.
\end{abstract}

\section{Introduction}

\subsection{Motivation} 
A matroid provides a combinatorial framework for capturing linear dependence in vector spaces~\cite{whitney1992abstract, Oxley, piff1970vector}. 
Given a finite collection of vectors, the linearly dependent subsets determine a matroid. 
When this process can be reversed, meaning that a given matroid $M$ corresponds to a collection of vectors, we refer to such a collection as a realization of~$M$. 

Let $M$ be a matroid of rank $n$ on $[d]$. 
A realization of $M$ is a tuple of vectors $\gamma=(\gamma_{1},\ldots,\gamma_{d}) \in \CC^{n}$ such that
    $\{i_{1},\ldots,i_{p}\}$ is dependent in $M$ 
    if and only if 
    $\{\gamma_{i_{1}},\ldots,\gamma_{i_{p}}\}$ is linearly dependent.
Equivalently, $\gamma$ may be regarded as an $n \times d$ matrix with columns $\gamma_{1},\ldots,\gamma_{d}$.  
The \emph{realization space} of $M$ is
\begin{eqnarray}\label{def: real}
\Gamma_{M} \;=\; \{ \gamma \in \CC^{nd} : \text{the column matroid of $\gamma$ realizes $M$} \},
\end{eqnarray}
where $\CC^{nd}$ has coordinate ring $S = \CC[x_{ij} \mid 1 \leq i \leq n,\ 1 \leq j \leq d]$.  
Thus a point of $\Gamma_M$ may be viewed either as a matrix in $\CC^{n \times d}$ or as a tuple of $d$ vectors in $\CC^{n}$.  
A matroid is called \emph{realizable} if $\Gamma_{M}$ is nonempty.  
The \emph{matroid variety} $V_{M}$ is the Zariski closure of $\Gamma_{M}$ in $\CC^{nd}$.

This notion of matroid variety differs from the more common one where varieties are embedded in the Grassmannian $\mathrm{Gr}(d,n)$.
Introduced in \cite{gelfand1987combinatorial}, matroid varieties in the affine setting have been extensively studied~\cite{clarke2021matroid, Vakil, sidman2021geometric, liwski2025solvable, sturmfels1989matroid, feher2012equivariant, knutson2013positroid}.  

In this work, we focus on the \emph{circuit variety} $V_{\Ccal(M)}$ associated with a matroid $M$, where $\Ccal(M)$ denotes its set of circuits, i.e., the minimal dependent sets.

\begin{definition}\normalfont\label{cir} 
Let $M$ be a matroid of rank $n$ on $[d]$. 
Consider the $n \times d$ matrix $X = (x_{i,j})$ of indeterminates. 
The \emph{circuit ideal} of $M$ is
\[
 I_{\Ccal(M)} \;=\; \{ [A|B]_X : B \in \Ccal(M),\ A \subseteq [n],\ |A| = |B| \}\subset S,
\]
where $[A|B]_X$ denotes the minor of $X$ with row set $A$ and column set $B$.  

A tuple of vectors $\gamma = (\gamma_{1}, \ldots, \gamma_{d}) \in \CC^{n}$ is said to \emph{include the dependencies} of $M$ if
\[
\{i_{1}, \ldots, i_{k}\}\ \text{is dependent in $M$} 
\;\;\Longrightarrow\;\; \{\gamma_{i_{1}}, \ldots, \gamma_{i_{k}}\}\ \text{is linearly dependent}.
\]  

The \emph{circuit variety} of $M$ is
\[
V_{\Ccal(M)} \;=\; V(I_{\Ccal(M)}) 
= \{ \gamma \in \CC^{nd} : \text{$\gamma$ includes the dependencies of $M$}\}.
\]
\end{definition}



Note that the circuit variety contains the matroid variety, and then the realization space. 
Our guiding question is the following:

\begin{question}\label{mainquest}
 How to decompose circuit varieties into irreducible, non-redundant varieties?
\end{question}

While matroid varieties are mainly studied in algebraic geometry for their rich structure, circuit varieties and their decompositions arise more naturally in applications such as determinantal varieties \cite{bruns2003determinantal, clarke2020conditional, clarke2021matroid, herzog2010binomial, pfister2019primary, ene2013determinantal}, rigidity theory \cite{jackson2024maximal, whiteley1996some, graver1993combinatorial, maximum}, and conditional independence models \cite{Studeny05:Probabilistic_CI_structures, DrtonSturmfelsSullivant09:Algebraic_Statistics, hocsten2004ideals, clarke2020conditional, clarke2022conditional, caines2022lattice, mohammadi2018prime}. Circuit varieties may strictly contain matroid varieties, and when the latter are irreducible, they appear as components in the decomposition of the former. This makes circuit varieties especially relevant in contexts centered on minimal matroid dependencies.

This problem is highly challenging. For example, \cite{pfister2019primary} presents an algorithm for decomposing the circuit variety of the $3 \times 4$ grid configuration, which has 16 circuits of size 3. Using {\tt Singular}, they showed that this variety has two components. However, the computations push the limits of current algebra systems, and the resulting components lack a combinatorial interpretation. In contrast, the methods developed here apply to broader classes of matroids and yield a combinatorial and geometric description of the decomposition.

\medskip
Note that, to answer Question~\ref{mainquest}, one could determine the primary decomposition of the associated circuit ideal. However, this would include identifying defining equations of matroid varieties, a very difficult problem resolved only in a few cases \cite{liwski2025solvable,pfister2019primary}. Because of these difficulties, our focus here is instead on decomposing the circuit variety itself. 


\subsection{Outline and our results}

In this work, we introduce an efficient method for computing the irreducible decomposition of circuit varieties, utilizing a new algorithm that identifies its maximal matroid degenerations. We now explain this concept. Consider matroids defined on a common ground set, ordered by the dependency relation $M \geq N$, where every dependent set of $M$ is also dependent in $N$. This ordering is known as the {\em weak order}~\cite{Oxley}. Our primary focus is on identifying the maximal matroid degenerations of a given matroid $M$, which are the largest matroids that are less than $M$ in the weak order. 

\medskip\noindent{\bf Strategy outline.} Our strategy to decomposing the circuit variety of $M$ begins with a reduction to smaller circuit varieties (Proposition~\ref{deco circ}), each associated with a maximal matroid degeneration of $M$. For each such variety, we determine whether it is irreducible; if not, we recursively apply the same decomposition process. 
A key ingredient in this strategy is thus an algorithm for identifying the maximal matroid degenerations of $M$; see Section~\ref{general rank} and Algorithm~\ref{211}. 
The algorithm relies on the submodularity of the rank function to derive rank constraints on various subsets, thereby identifying forced dependencies and providing a natural termination condition.




\medskip\noindent{\bf Limitations.} We now comment on why decomposing the circuit variety $V_{\mathcal{C}(M)}$ is difficult.

\smallskip
\noindent $\blacktriangleright$ The \textbf{initial step} involves determining the maximal matroid degenerations of $M$, which requires constructing a poset of potential candidates and developing methods to prune the search set effectively. From the standpoint of enumerative combinatorics, this is a challenging problem. A key difficulty lies in the fact that introducing a single dependency, and subsequently all those enforced by it, can result in the same matroid arising from many different initial choices, complicating the enumeration.
This relates to a classical question in rigidity theory to determine when a given family of matroids has a unique maximal element. This is particularly relevant in the study of maximal abstract rigidity matroids \cite{whiteley1996some,graver91rigidity,graver1993combinatorial}, maximal $H$-matroids \cite{maximum}, and $\mathcal{X}$-matroids~\cite{jackson2024maximal}. 

\smallskip

\noindent $\blacktriangleright$ The \textbf{second step} involves addressing the geometric aspects required to decompose $V_{\mathcal{C}(M)}$. Once the maximal matroid degenerations of $M$ have been identified, one must determine which ones give rise to irreducible circuit varieties. This is a subtle and generally difficult problem~\cite{clarke2021matroid,Vakil,sidman2021geometric,sturmfels1989matroid}. For those matroids whose associated circuit variety is reducible, the decomposition process must be recursively applied, further increasing the complexity of the problem. 
Iterating through the algorithm multiple times leads to considerable redundancy, as the same matroid may appear repeatedly, arising from distinct sequences of added dependencies. This creates a significant combinatorial and enumeration challenge. One must determine whether newly obtained matroids are isomorphic to any of those which have already been seen, in which case further iterations from that point can be avoided. This problem extends the classical graph isomorphism problem~\cite{babai2016graph}.

\medskip\noindent{\bf Effectiveness.} In practice, when applying our algorithm to large families of examples, we have observed that the maximal matroid degenerations that arise are very structured, such as being nilpotent or inductively connected (see \cite{Fatemeh5} and Definitions~\ref{nilpotent} and~\ref{induc}). For these families, we can apply Theorem~\ref{teo ir} to decompose their associated circuit varieties, hence not needing to run the algorithm repeatedly. In particular, when the original matroid has a large automorphism group, indicating a high degree of symmetry, the resulting maximal degenerations fall into fewer classes, which are also classified up to symmetry. For such matroids, the algorithm performs especially efficiently, as shown in Section~\ref{examples}. For example, matroids from Steiner systems have such symmetries. Moreover, for the subfamily arising from affine and projective planes, we conjecture in Section~\ref{furt}, based on our computations, that exactly four types of maximal matroid degenerations occur.

\medskip

\noindent{\bf Termination.}
A general termination criterion for our strategy is extremely difficult to obtain.
First, the poset of matroid degenerations below a fixed matroid can grow
super-exponentially, even in rank four.  Distinct initial choices of additional
dependencies often lead to the \emph{same} degeneration after closure under
submodularity, making it hard to determine whether the degeneration poset is
finite.  In fact, this already subsumes long-standing open problems in rigidity
theory, such as the uniqueness of maximal abstract rigidity matroids and $\mathcal{X}$-matroids.
Second, completeness of the decomposition requires detecting when two
degenerations are isomorphic.  This task is at least as hard as the graph
isomorphism problem, and no effective structural classification is known.  For
this reason, a full characterization of the matroids for which the algorithm is
guaranteed to terminate appears to be beyond the reach of current theory.

\medskip
Despite these intrinsic limitations, the algorithm performs very well in
practice.  
To further illustrate the effectiveness of our approach, we apply it to decompose the circuit varieties of several rank-four matroids, including the Vámos matroid, the unique Steiner system $S(3,4,8)$, the Fano dual, and the dual of the graphic matroid $M(K_{3,3})$. 

\begin{example}
Consider the graphic matroid $M(K_{3,3})$ associated with the bipartite graph $K_{3,3}$, and let $\K$ denote its dual. In Subsection~\ref{k33}, we show the following.
\begin{itemize}
\item The matroid $\K$ has exactly $34$ maximal matroid degenerations.
\item The circuit variety of $\K$ has precisely two minimal components: the matroid variety of $\K$ itself, and that of its truncation, known as the $3\times 3$ grid. 
\end{itemize}
\end{example}

\noindent These decompositions were previously unknown, and current symbolic or numerical algebra systems cannot perform the necessary computations. Our decomposition strategy also connects to conditional independence models in statistics~\cite{clarke2021matroid,ollie_fatemeh_harshit}. An open-source, Python-optimized implementation of our algorithms is available at:      
\url{https://github.com/rprebet/maximal_matroids}.

\medskip
\noindent
{\bf Outline.} Section~\ref{sec 2} provides an overview of key concepts, including matroids and their realization spaces. In Section~\ref{sec 3}, we introduce the notion of maximal matroid degenerations and provide a decomposition strategy for computing the irreducible components of circuit varieties, which relies on an algorithm for identifying maximal degenerations, detailed in Section~\ref{general rank}. Section~\ref{algorithm} presents an optimized version of this algorithm for rank-four matroids and its implementation. In Section~\ref{examples}, we apply this strategy to compute the irreducible decompositions of circuit varieties for several classical rank-four matroids. In Section~\ref{furt}, we formulate a conjecture on maximal matroid degenerations of affine and projective planes of arbitrary order. Finally, Section~\ref{appen} discusses techniques for identifying redundant matroid varieties and provides proofs for the technical lemmas in Section~\ref{examples}.

\medskip
\noindent{\bf Acknowledgement.} 
F.M. would like to thank Hugues Verdure for helpful discussions. She also gratefully acknowledges the hospitality of the Mathematics Department at Stockholm University during her research visit to Samuel Lundqvist, where part of this work was carried out. The authors were partially supported by the FWO grants G0F5921N (Odysseus) and G023721N, and the grant iBOF/23/064 from KU Leuven. E.L. was supported by PhD fellowship 1126125N.

\vspace{-3mm}

\section{Preliminaries}\label{sec 2}

We begin by recalling key properties of matroids; see \cite{Oxley, gelfand1987combinatorial, liwski2025solvable, Fatemeh5} for details. For positive integers $d,n$, we write $[d] = \{1,\ldots,d\}$ and $\textstyle \binom{[d]}{n}$ for the set of $n$-element subsets of $[d]$.

\vspace{-4mm}

\subsection{Matroids}

We first present some preliminary results about matroids; see \cite{Oxley, piff1970vector} for more details. 
Recall first the definition of a matroid in terms of its independent sets.

\begin{definition}\label{def:matroidcircuit}\normalfont 
A matroid $M$ consists of a ground set $[d]$ together with a collection $\mathcal{I}(M)$ of subsets of $[d]$, called \emph{independent sets}, satisfying: 
\begin{enumerate}[label=(\roman*)]
\item $\emptyset \in \mathcal{I}$;\vs
\item if $I \in \mathcal{I}$ and $I' \subseteq I$, then $I' \in \mathcal{I}$;\vs
\item if $I_1, I_2 \in \mathcal{I}$ with $|I_1| < |I_2|$, then there exists $e \in I_2 \setminus I_1$ such that $I_1 \cup \{e\} \in \mathcal{I}$.
\end{enumerate}
\end{definition}

There are multiple equivalent ways to define a matroid, including descriptions in terms of independent sets, the rank function, or bases. We now introduce these concepts and refer to \cite{Oxley} for a comprehensive discussion on these equivalent definitions.

\begin{definition}\label{def:dependant}\normalfont
Let $M$ be a matroid on the ground set $[d]$. 
\vspace*{-.5em}

\hspace*{-1cm}\parbox{\linewidth}{
\begin{itemize}[label=$\blacktriangleright$]

\item A subset of $[d]$ that is not independent is called \emph{dependent}. The set of all dependent sets of $M$ is denoted by $\mathcal{D}(M)$.\vs

\item A \emph{circuit} is a minimally dependent subset of $[d]$. The set of all circuits is denoted by $\mathcal{C}(M)$.
We denote by $\Ccal_{i}(M)$ the collection of circuits of size $i$ of $M$.\vs
\item The \emph{rank} of a subset $F \subseteq [d]$, denoted $\rank(F)$, is the size of the largest independent set contained in $F$. The rank of $M$, denoted $\rank(M)$, is defined to be the rank of $[d]$.
\item Let $F\subset [d]$. The {\em closure} $\closure{F}$ of $F$, is the set of all $x\in [d]$ such that $\rank(F\cup \{x\})=\rank(F)$.\vs
\item $F$ is called a {\em flat} if $F=\closure{F}$, and is a \emph{cyclic flat} if it is also a union of circuits.\vs

\item Let $x\in [d]$, if $\rank(\{x\})=0$ then $x$ is called a {\em loop}. Conversely, if $x$ is a {\em coloop} if it does not belong to any circuit of $M$.
A subset $\{x,y\}\subset [d]$ is called a {\em double point} if $\rank(\{x,y\})=1$. Finally, a matroid without loops or double points is called {\em simple}.
\end{itemize}
}
\end{definition}

\begin{proposition}[\textup{\cite[Lemma~1.3.1]{Oxley}}]\label{prop:submodularity}
The rank function of a matroid is submodular, meaning that for any subsets $A$ and $B$ of the ground set, the following inequality holds:
\[\rank(A)+\rank(B)\geq \rank(A\cup B)+\rank(A\cap B).\]
\end{proposition}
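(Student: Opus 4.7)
The plan is to leverage the equivalence between the rank function and the size of maximal independent subsets, together with the basis augmentation property of matroids (which follows, for instance, from the independent-set axioms). First I would pick a basis $I$ of the intersection, so that $|I| = \rank(A \cap B)$. Since $I$ remains independent inside the larger set $A \cup B$, I can extend it to a basis $J$ of $A \cup B$; this gives $I \subseteq J$ and $|J| = \rank(A \cup B)$.

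Next I would split $J$ according to $A$ and $B$ by setting $J_A = J \cap A$ and $J_B = J \cap B$. Two structural observations will drive the argument: on the one hand, because $J \subseteq A \cup B$, every element of $J$ lies in $A$ or $B$, so $J = J_A \cup J_B$; on the other hand, the intersection $J_A \cap J_B$ equals $J \cap (A \cap B)$, which is an independent subset of $A \cap B$ that contains $I$.

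The counting conclusion is then immediate. The sets $J_A$ and $J_B$ are independent in $A$ and in $B$ respectively, so $|J_A| \leq \rank(A)$ and $|J_B| \leq \rank(B)$, and the inclusion-exclusion identity for sizes yields
\[
\rank(A) + \rank(B) \;\geq\; |J_A| + |J_B| \;=\; |J_A \cup J_B| + |J_A \cap J_B| \;=\; |J| + |I| \;=\; \rank(A\cup B) + \rank(A\cap B).
\]

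The main—indeed only—delicate step is justifying the equality $J_A \cap J_B = I$, since the inequality above would fail if this intersection were strictly larger than $I$. This is where maximality of $I$ as an independent subset of $A \cap B$ is used: any independent subset of $A \cap B$ that contains a basis of $A \cap B$ must coincide with it. Once this point is settled, the rest of the proof reduces to bookkeeping on cardinalities.
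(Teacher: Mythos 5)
The paper does not prove this statement; it simply cites it as Lemma 1.3.1 of Oxley, so there is no in-paper argument to compare against. Your proof is the standard textbook one and is correct: you pick a basis $I$ of $A\cap B$, extend it to a basis $J$ of $A\cup B$, split $J$ into $J_A = J\cap A$ and $J_B = J\cap B$, and use inclusion--exclusion on cardinalities together with the bounds $|J_A|\le\rank(A)$, $|J_B|\le\rank(B)$.

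One small remark on your commentary, though not on the mathematics. You flag $J_A\cap J_B = I$ as the step where the argument "would fail if this intersection were strictly larger than $I$." That is backwards: your chain only uses $|J_A\cap J_B|\ge|I|$ (which follows immediately from $I\subseteq J$ and $I\subseteq A\cap B$), and a strictly larger $J_A\cap J_B$ would make the displayed chain an honest strict inequality, still implying the desired $\ge$. In fact the equality you prove does hold (an independent subset of $A\cap B$ containing the basis $I$ must equal $I$ by maximality), so nothing is wrong, but the proof does not actually depend on it and your worry is misplaced. Everything else — the extension of $I$ to $J$ via independence augmentation, the identity $J = J_A\cup J_B$, the identification $J_A\cap J_B = J\cap(A\cap B)$, and the final counting — is exactly right.
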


We now review the concepts of {\em restriction} and {\em deletion}: 

\begin{definition}
\normalfont \label{subm}
Let $M$ be a matroid of rank $n$ on the ground set $[d]$ and $S\subseteq [d]$. 

{
\begin{itemize}[label=$\blacktriangleright$]
\item The {\em restriction of $M$ to $S$} is the matroid 
 on $S$ whose rank function is given by

 \[
  \rank(A)=\rank_{M}(A) \text{\quad for any $A\subset S$},
\] 
where $\rank_{M}$ is the rank function on $M$. This matroid is called a {\em submatroid} of $M$ and is denoted by $M|S$, or simply $S$ when the context is clear.\vs
\item The {\em deletion} of $S$, denoted $M\setminus S$, corresponds to the restriction
$M|([d]\setminus S)$.\vs
\end{itemize}
}
\end{definition}

\begin{definition}\normalfont\label{uniform 3}
The uniform matroid $U_{n, d}$ on the ground set $[d]$ of rank $n$ is  
the one whose independent sets are the subsets of size at most $n$. See Figure~\ref{uniform}.
\end{definition}

\begin{figure}[H]
    \centering
    \begin{subfigure}[b]{0.28\textwidth}
    \centering
    \includegraphics[width=\textwidth, trim=0pt 0pt 180pt 0pt, clip]{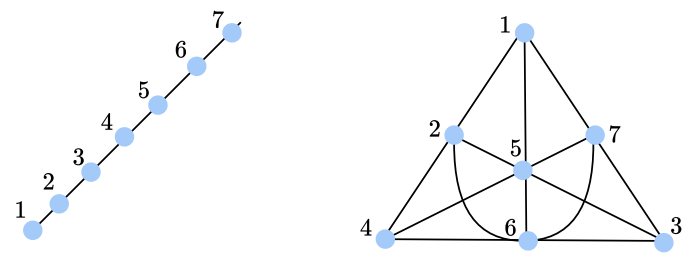}
    \caption{Uniform matroid $U_{2,7}$}
    \label{uniform}
    \end{subfigure}
    \hspace*{1cm}
    \begin{subfigure}[b]{0.28\textwidth}
        \centering
        \includegraphics[width=\textwidth, trim=180pt 0pt 0pt 0pt, clip]{Figure1.png}
        \caption{Fano plane}
        \label{fig:right}
        \label{fano}
    \end{subfigure}
    \caption{Uniform matroid $U_{2,7}$ and Fano plane.}
    \label{figure uniform and fano}
\end{figure}

\begin{definition}\normalfont\label{general}
Let $M$ be a matroid of rank $n$ on $[d]$, with elements, referred to as points. We define an equivalence relation on the circuits of $M$ of size less than $n+1$: 
\begin{equation}\label{equiv}C_{1}\sim C_{2} \Longleftrightarrow \closure{C_{1}}=\closure{C_{2}}.\end{equation}
We adopt the following terminology and notation.\vspace*{-.5em}

\hspace*{-1cm}\parbox{\linewidth}{
\begin{itemize}[label=$\blacktriangleright$] 
\item A {\em subspace} of $M$ is an equivalence class $l$. We say that $\rank(l)=k$ if $\rank(C)=k$ for any circuit $C\in l$. We denote by $\mathcal{L}_{M}$ the set of all subspaces of $M$.\vs
\item A point $p\in [d]$ is said to belong to the subspace $l$, if $p\in C$ for some circuit $C\in l$. For each $p\in [d]$, let $\mathcal{L}_{p}$ denote the set of all the subspaces of $M$ containing $p$. 
The {\em degree} of $p$, is defined as $\deg(p) = \size{\mathcal{L}_{p}}$. 
\end{itemize} 
}
\end{definition}

\begin{example}
Consider the quadrilateral set configuration $\text{QS}$ shown in Figure~\ref{Quadrilateral set}. This represents a rank-$3$ matroid on $[6]$, with the following circuits of size at most three: 
\[\Ccal = \{\{1,2,3\}, \{1,5,6\}, \{3,4,5\}, \{2,4,6\}\}.\]
The subspaces of $\text{QS}$ coincide with $\Ccal$, and each point has degree two.
\end{example}

\begin{figure}[H]
    \centering
    \begin{subfigure}[b]{0.25\textwidth}
        \centering
        \begin{tikzpicture}[x=0.75pt,y=0.75pt,yscale=-1,xscale=1]

\draw [line width=0.75]    (247.65,123.92+60) -- (201.96,207.53+60) ;
\draw [line width=0.75]    (247.65,123.92+60) -- (291.68,207.53+60) ;
\draw [line width=0.75]    (219.23,174.98+60) -- (291.68,207.53+60) ;
\draw [line width=0.75]    (274.41,174.98+60) -- (201.96,207.53+60) ;
\draw [fill={rgb, 255:red, 173; green, 216; blue, 230}, fill opacity=1] (244.87,123.92+60) .. controls (244.87,125.68+60) and (246.12,127.11+60) .. (247.65,127.11+60) .. controls (249.19,127.11+60) and (250.44,125.68+60) .. (250.44,123.92+60) .. controls (250.44,122.15+60) and (249.19,120.73+60) .. (247.65,120.73+60) .. controls (246.12,120.73+60) and (244.87,122.15+60) .. (244.87,123.92+60) -- cycle ;
\draw [fill={rgb, 255:red, 173; green, 216; blue, 230}, fill opacity=1] (244.31,187.1+60) .. controls (244.31,188.87+60) and (245.56,190.29+60) .. (247.1,190.29+60) .. controls (248.64,190.29+60) and (249.88,188.87+60) .. (249.88,187.1+60) .. controls (249.88,185.34+60) and (248.64,183.91+60) .. (247.1,183.91+60) .. controls (245.56,183.91+60) and (244.31,185.34+60) .. (244.31,187.1+60) -- cycle ;
\draw [fill={rgb, 255:red, 173; green, 216; blue, 230}, fill opacity=1] (216.45,174.98+60) .. controls (216.45,176.74+60) and (217.69,178.17+60) .. (219.23,178.17+60) .. controls (220.77,178.17+60) and (222.02,176.74+60) .. (222.02,174.98+60) .. controls (222.02,173.21+60) and (220.77,171.79+60) .. (219.23,171.79+60) .. controls (217.69,171.79+60) and (216.45,173.21+60) .. (216.45,174.98+60) -- cycle ;
\draw [fill={rgb, 255:red, 173; green, 216; blue, 230}, fill opacity=1] (271.62,174.98+60) .. controls (271.62,176.74+60) and (272.87,178.17+60) .. (274.41,178.17+60) .. controls (275.94,178.17+60) and (277.19,176.74+60) .. (277.19,174.98+60) .. controls (277.19,173.21+60) and (275.94,171.79+60) .. (274.41,171.79+60) .. controls (272.87,171.79+60) and (271.62,173.21+60) .. (271.62,174.98+60) -- cycle ;
\draw [fill={rgb, 255:red, 173; green, 216; blue, 230}, fill opacity=1] (288.9,207.53+60) .. controls (288.9,209.29+60) and (290.14,210.72+60) .. (291.68,210.72+60) .. controls (293.22,210.72+60) and (294.47,209.29+60) .. (294.47,207.53+60) .. controls (294.47,205.77+60) and (293.22,204.34+60) .. (291.68,204.34+60) .. controls (290.14,204.34+60) and (288.9,205.77+60) .. (288.9,207.53+60) -- cycle ;
\draw [fill={rgb, 255:red, 173; green, 216; blue, 230}, fill opacity=1] (199.17,207.53+60) .. controls (199.17,209.29+60) and (200.42,210.72+60) .. (201.96,210.72+60) .. controls (203.49,210.72+60) and (204.74,209.29+60) .. (204.74,207.53+60) .. controls (204.74,205.77+60) and (203.49,204.34+60) .. (201.96,204.34+60) .. controls (200.42,204.34+60) and (199.17,205.77+60) .. (199.17,207.53+60) -- cycle ;

\draw (243.29,107.07+60) node [anchor=north west][inner sep=0.75pt]   [align=left] {{\scriptsize $1$}};
\draw (200.77,165.67+60) node [anchor=north west][inner sep=0.75pt]   [align=left] {{\scriptsize $2$}};
\draw (184.94,205.77+60) node [anchor=north west][inner sep=0.75pt]   [align=left] {{\scriptsize $3$}};
\draw (241.88,169.35+60) node [anchor=north west][inner sep=0.75pt]   [align=left] {{\scriptsize $4$}};
\draw (281.98,164.77+60) node [anchor=north west][inner sep=0.75pt]   [align=left] {{\scriptsize $5$}};
\draw (299.84,206.21+60) node [anchor=north west][inner sep=0.75pt]   [align=left] {{\scriptsize $6$}};
\end{tikzpicture}
        \caption{Quadrilateral set}
      \label{Quadrilateral set}
    \end{subfigure}
    \hspace*{1.5cm}
    \begin{subfigure}[b]{0.35\textwidth}
        \centering
\tikzset{every picture/.style={line width=0.75pt}} 
        \begin{tikzpicture}[x=0.75pt,y=0.75pt,yscale=-1,xscale=1]

\tikzset{every picture/.style={line width=0.75pt}} 

\draw    (81.69,116.61) -- (191.16,174.3) ;
\draw    (77,131.88) -- (224,131.88) ;
\draw    (80.13,150.55) -- (191.16,79.27) ;
\draw [fill={rgb, 255:red, 173; green, 216; blue, 230}, fill opacity=1]
(107.34,131.2) .. controls (107.34,133.07) and (108.63,134.58) .. (110.23,134.58) .. controls (111.83,134.58) and (113.12,133.07) .. (113.12,131.2) .. controls (113.12,129.33) and (111.83,127.81) .. (110.23,127.81) .. controls (108.63,127.81) and (107.34,129.33) .. (107.34,131.2) -- cycle ;
\draw [fill={rgb, 255:red, 173; green, 216; blue, 230}, fill opacity=1]
(142.62,149.93) .. controls (142.62,151.8) and (143.91,153.32) .. (145.51,153.32) .. controls (147.11,153.32) and (148.4,151.8) .. (148.4,149.93) .. controls (148.4,148.06) and (147.11,146.55) .. (145.51,146.55) .. controls (143.91,146.55) and (142.62,148.06) .. (142.62,149.93) -- cycle ;
\draw [fill={rgb, 255:red, 173; green, 216; blue, 230}, fill opacity=1]
(173.7,166.79) .. controls (173.7,168.66) and (174.99,170.18) .. (176.59,170.18) .. controls (178.19,170.18) and (179.48,168.66) .. (179.48,166.79) .. controls (179.48,164.92) and (178.19,163.41) .. (176.59,163.41) .. controls (174.99,163.41) and (173.7,164.92) .. (173.7,166.79) -- cycle ;
\draw [fill={rgb, 255:red, 173; green, 216; blue, 230}, fill opacity=1] (201.42,131.2) .. controls (201.42,133.07) and (202.71,134.58) .. (204.31,134.58) .. controls (205.91,134.58) and (207.2,133.07) .. (207.2,131.2) .. controls (207.2,129.33) and (205.91,127.81) .. (204.31,127.81) .. controls (202.71,127.81) and (201.42,129.33) .. (201.42,131.2) -- cycle ;
\draw [fill={rgb, 255:red, 173; green, 216; blue, 230}, fill opacity=1] (167.82,131.2) .. controls (167.82,133.07) and (169.11,134.58) .. (170.71,134.58) .. controls (172.31,134.58) and (173.6,133.07) .. (173.6,131.2) .. controls (173.6,129.33) and (172.31,127.81) .. (170.71,127.81) .. controls (169.11,127.81) and (167.82,129.33) .. (167.82,131.2) -- cycle ;
\draw [fill={rgb, 255:red, 173; green, 216; blue, 230}, fill opacity=1] (177.06,87.18) .. controls (177.06,89.05) and (178.35,90.56) .. (179.95,90.56) .. controls (181.55,90.56) and (182.84,89.05) .. (182.84,87.18) .. controls (182.84,85.31) and (181.55,83.79) .. (179.95,83.79) .. controls (178.35,83.79) and (177.06,85.31) .. (177.06,87.18) -- cycle ;
\draw [fill={rgb, 255:red, 173; green, 216; blue, 230}, fill opacity=1] (145.14,105.91) .. controls (145.14,107.78) and (146.43,109.3) .. (148.03,109.3) .. controls (149.63,109.3) and (150.92,107.78) .. (150.92,105.91) .. controls (150.92,104.04) and (149.63,102.52) .. (148.03,102.52) .. controls (146.43,102.52) and (145.14,104.04) .. (145.14,105.91) -- cycle ;

\draw (166.25,71.57) node [anchor=north west][inner sep=0.75pt]   [align=left] {{\scriptsize $1$}};
\draw (206.28,115.51) node [anchor=north west][inner sep=0.75pt]   [align=left] {{\scriptsize $3$}};
\draw (138.91,153.35) node [anchor=north west][inner sep=0.75pt]   [align=left] {{\scriptsize $6$}};
\draw (170.08,173.62) node [anchor=north west][inner sep=0.75pt]   [align=left] {{\scriptsize $5$}};
\draw (173.44,116.45) node [anchor=north west][inner sep=0.75pt]   [align=left] {{\scriptsize $4$}};
\draw (106.8,111.36) node [anchor=north west][inner sep=0.75pt]   [align=left] {{\scriptsize $7$}};
\draw (136.66,90.59) node [anchor=north west][inner sep=0.75pt]   [align=left] {{\scriptsize $2$}};
\end{tikzpicture}
       \caption{Three concurrent lines}
       \label{three concurrent lines}
    \end{subfigure}
\caption{Examples of rank 3 matroids.}
    \label{fig:combined}
\end{figure}

\subsection{Paving, nilpotent and inductively connected matroids}

In this section, we present families of matroids, from \cite{Fatemeh5,Fatemeh6}, that will play an important role in this work. As we will see in the next section, these will form base cases for our strategy. In the following, consider $M$, a matroid of rank $n$ on $[d]$.

\begin{definition}\normalfont \label{pav}
A matroid $M$ of rank $n$ is called a {\em paving matroid} if every circuit of $M$ has a size either $n$ or $n+1$. In this case, we refer to $M$ as an $n$-paving matroid. We also introduce the following terminology.
\begin{itemize}
\item The set of subspaces $\mathcal{L}_{M}$, as defined in Definition~\ref{general}, corresponds to the collection of {\em dependent hyperplanes} of $M$. These are maximal subsets of points, of size at least $n$, in which every subset of $n$ points forms a circuit.
\item When $n=3$, these dependent hyperplanes are simply called {\em lines}, and $M$ is referred to as a {\em point-line configuration}. 
\end{itemize}
\end{definition}

\begin{example}
The matroid of rank $3$ depicted in Figure~\ref{three concurrent lines} is a point-line configuration with points $[7]$ and lines given by $\mathcal{L}=\{\{1,2,7\}, \{3,4,7\}, \{5,6,7\}\}$. 
\end{example}

\begin{example}
The following collection of subsets of $[7]$:
\[\{1,2,4\},\{1,3,7\},\{1,5,6\},\{2,3,5\},\{4,5,7\},\{2,6,7\},\{3,4,6\},\] 
defines a point-line configuration, where each subset corresponds to a line. The associated matroid is known as the \emph{Fano plane}, see Figure~\ref{fano}.
\end{example}

For the following definition, recall Definition~\ref{general}.

\begin{definition}
\label{nilpotent}
Let $S_M=\{p \in [d] \mid \text{deg}(p) > 1\}$.
The \textit{nilpotent chain} of $M$ is defined as the following sequence of submatroids of $M$: 
$$M_{0}=M,\quad M_{1}=M|S_{M},\quad \text{and }\ M_{j+1}=M|S_{M_{j}} \ \text{ for every $j\geq 1$.}$$
We say that $M$ is \textit{nilpotent} if $M_{j}=\emptyset$ for some $j$.
\end{definition}

A \emph{basis} of $M$ is a maximal independent subset of $[d]$ with respect to inclusion. All bases of $M$ have the same size, and the collection of all bases is denoted by $\mathcal{B}(M)$.\vs

\begin{definition}
\label{induc}
We say that $M$ is \emph{inductively connected} if there exists a permutation $w=(j_{1},\ldots,j_{d})$ of $[d]$ such that:
\begin{enumerate}[label=(\roman*)]
\item the first $n$ elements $j_{1},\ldots,j_{n}$ form a basis of $M$;\vs
\item for each $i \in \{n+1,\dotsc,d\}$, we have $\deg(j_{i}) \leq 2$ within 
$M|\{j_{1},\ldots,j_{i}\}$. 
\end{enumerate}
\end{definition}

\begin{example}
The configuration in Figure~\ref{Quadrilateral set} is not nilpotent, while the one in Figure~\ref{three concurrent lines} is. Both matroids are inductively connected.
\end{example}

\noindent Note that, from the definition, it follows that nilpotent matroids are inductively connected. 

\section{Decomposing using maximal matroid degenerations}\label{sec 3}

As outlined earlier, our main approach is to first decompose a circuit variety into smaller ones, then either apply known results or recursively repeat the process. In this section, we introduce an efficient method for decomposing the circuit variety of a given matroid $M$. This method is based on an algorithm for identifying the maximal matroid degenerations of $M$, which we will detail in the following subsections.

\subsection{Reduction to the maximal matroid degenerations}\label{sec deco}

We first introduce an order relation on matroids. 
This 
is the {\em weak order} commonly studied in the literature~\cite{Oxley}.
\begin{definition}\label{dependency}
Let $N_{1}$ and $N_{2}$ be matroids on $[d]$. We say that $N_{2}\leq N_{1}$ if $\mathcal{D}(N_{2}) \supseteq \mathcal{D}(N_{1})$. This partial order is referred to as the {\em 
weak order}
on matroids. 
\end{definition}

\noindent We can now present the main object of interest in this work.

\begin{definition}\label{def min}
The set of all maximal matroid degenerations of a matroid $M$ is:
\[
    \minabove{M}=\minposet{N:N<M}.
\] 
\end{definition}
We recall the following result from \textup{\cite[Proposition~4.1]{liwski2025minimal}}, which establishes a relationship between the circuit variety of $M$ and those of its maximal matroid degenerations.

\begin{proposition}
\label{deco circ}
Let $M$ be a matroid. Then $V_{\Ccal(M)}=\cup_{N\in \minabove{M}}V_{\Ccal(N)}\ \cup \ V_{M}$.
\end{proposition}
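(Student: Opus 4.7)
The plan is to establish the two inclusions set-theoretically, exploiting the bijective correspondence between a point $\gamma \in V_{\Ccal(M)}$ and its associated linear matroid. More precisely, to each tuple $\gamma = (\gamma_1, \ldots, \gamma_d) \in \CC^{nd}$ I would associate the matroid $M_\gamma$ on $[d]$ whose dependent sets are exactly the linearly dependent subsets among $\{\gamma_1, \ldots, \gamma_d\}$. Then the defining condition of the circuit variety translates directly into the poset condition $M_\gamma \geq M$.

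For the inclusion $(\supseteq)$, I would argue that $V_M \subseteq V_{\Ccal(M)}$ holds because $\Gamma_M \subseteq V_{\Ccal(M)}$ by definition and the circuit variety is Zariski-closed, so it contains the closure $V_M$. For any $N \in \minabove{M}$, the relation $N > M$ gives $\mathcal{D}(M) \subseteq \mathcal{D}(N)$, hence any $\gamma$ including the dependencies of $N$ also includes those of $M$, so $V_{\Ccal(N)} \subseteq V_{\Ccal(M)}$.

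For the nontrivial inclusion $(\subseteq)$, I would fix $\gamma \in V_{\Ccal(M)}$ and split on the value of $M_\gamma$. Since $\gamma$ includes the dependencies of $M$, we have $M_\gamma \geq M$. In the case $M_\gamma = M$, the point $\gamma$ lies in $\Gamma_M \subseteq V_M$ and we are done. Otherwise $M_\gamma > M$, and the set
\[
    \mathcal{F}_\gamma = \{N' : M < N' \leq M_\gamma\}
\]
is nonempty. Since the set of matroids on $[d]$ is finite, $\mathcal{F}_\gamma$ contains at least one minimal element $N$. I would then observe that any such $N$ actually belongs to $\minabove{M}$: if there existed $N''$ with $M < N'' < N$, then $N'' \leq N \leq M_\gamma$ would place $N''$ in $\mathcal{F}_\gamma$, contradicting the minimality of $N$ there. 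Finally, since $M_\gamma \geq N$, the point $\gamma$ includes all dependencies of $N$, hence $\gamma \in V_{\Ccal(N)}$.

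The proof is mostly bookkeeping, and the only subtle point is the poset-minimality argument, namely checking that minimality inside the filtered set $\mathcal{F}_\gamma$ upgrades to membership in $\minabove{M}$; no geometric input beyond $\Gamma_M \subseteq V_M$ is needed.
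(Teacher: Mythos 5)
The paper does not prove this statement itself but cites it from an earlier reference \cite[Proposition~4.1]{liwski2025minimal}, so there is no in-text proof to compare against. Your argument is correct: assigning to each $\gamma$ the vector matroid $M_\gamma$, observing that $\gamma\in V_{\Ccal(M)}$ iff $M_\gamma\geq M$, and then either landing in $\Gamma_M$ (when $M_\gamma=M$) or picking a minimal element of the nonempty finite set $\{N' : M < N' \leq M_\gamma\}$ and upgrading its minimality to membership in $\minabove{M}$, is the natural and complete argument. The only phrasing to be careful about is calling $\gamma\mapsto M_\gamma$ a bijective correspondence (it is many-to-one), and one should implicitly keep all the circuit varieties $V_{\Ccal(N)}$ in the common ambient space $\CC^{nd}$ with $n=\rank(M)$, which is unproblematic since every $N\geq M$ has rank at most $n$ and hence circuits of size at most $n+1$; neither of these affects the validity of the proof.
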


\noindent Building on the result above, the first step toward Question~\ref{mainquest} is to address the following.

\begin{question}\label{quest} 
How can we compute $\minabove{M}$ given a matroid $M$?
\end{question}

Sections~\ref{general rank} and \ref{algorithm} will be devoted to fully answering Question~\ref{quest}. Assuming we know how to perform such a task, the following key results from \cite{Fatemeh6,Fatemeh5,liwski2025solvable} give tools to conclude on some special cases that will constitute our base cases.

\begin{theorem}\label{teo ir}
If $M$ is a paving matroid without points of degree greater than two, then: 
\begin{enumerate}[label=\rm(\roman*)]
\item if $M$ is nilpotent, then $V_{\Ccal(M)}=V_{M}$;
\item if every proper submatroid of $M$ is nilpotent, then $V_{\Ccal(M)}=V_{M}\cup V_{U_{n-1,d}}$;
\item if $M$ is inductively connected and realizable, then $V_{M}$ is an irreducible variety.
\end{enumerate}
\end{theorem}

\begin{remark}
Theorem~\ref{teo ir} identifies two broad families of matroids for which our strategy is
guaranteed to terminate and to recover the full irreducible decomposition:
nilpotent matroids and inductively connected realizable matroids.  These serve
as the base cases of our recursive approach, and in all examples of Section~\ref{examples}
the degenerations produced by Algorithms~\ref{211} and \ref{alg:computeleaf} fall into these families after
one or two iterations.
\end{remark}

\begin{example}\label{ej quad}
Consider the point-line configuration $\text{QS}$ illustrated in Figure~\ref{Quadrilateral set}. Since every proper submatroid of $\text{QS}$ is nilpotent, it follows that $V_{\Ccal(\text{QS})}=V_{\text{QS}}\cup V_{U_{2,6}}$. Noting that both $\text{QS}$ and the uniform matroid $U_{2,6}$ are inductively connected, so that their matroid varieties are irreducible. It follows that this decomposition gives the irreducible components of the circuit variety of $\text{QS}$.
\end{example}

Finally, the last ingredient in our strategy involves identifying redundant irreducible components, which boils down to answering the following question.

\begin{question}\label{question}
Given two realizable matroids $M$ and $N$ on the same ground set, under what conditions does the inclusion $V_N \subseteq V_M$ hold between their associated varieties?
\end{question}

We will partially answer this question in Section~\ref{appen} but, for the time being, assume that we know how to answer it. We outline our strategy for determining the irreducible decomposition of circuit varieties in the following strategy. We emphasize that this strategy is not guaranteed to terminate nor to provide the full irreducible decomposition of $V_{\Ccal(M)}$ in all cases. In particular, we do not always reach one of the termination cases (1 or 2). Moreover, even when these cases are satisfied, the final two steps do not offer a complete strategy for determining the irreducibility and redundancy of all the obtained components. Therefore, alternative and adapted methods may be required for such matroid varieties; see Section~\ref{examples} for several examples.

\begin{algorithm}[H]
\caption*{Decomposition strategy}\label{alg:decompstrategy}
\begin{algorithmic}[1]
 \Require A matroid $M$\vs
 \Ensure A list of matroids $L_M=(M_1,\dotsc,M_k)$ such that\vspace*{-.3em}
 \[
    V_{\Ccal(M)} = V_{M_1} \cup \cdots \cup V_{M_k},\vspace*{-.3em}
 \]
 and this is a potential irreducible decomposition.\vs
\State {\bf Case~1:} if $M$ is a nilpotent paving matroid with no points of degree greater 
than two then \Return the list $(M)$. \hfill \scalebox{.9}{(Thm\,\ref{teo ir})}\vs
\State {\bf Case~2:} if $M$ is a paving matroid where all points have degree at most two, 
and all proper submatroids are nilpotent, then \Return the list $(M,U_{2,d})$. \hfill \scalebox{.9}{(Thm\,\ref{teo ir})}\vs 
\State Else, compute the set $\minabove{M}$ of maximal matroid degenerations of $M$. \hfill \scalebox{.9}{(Question\,\ref{quest})}\vs 
\State For each $N \in \minabove{M}$ compute a list $L_N$, such that $V_{\Ccal(N)} = \cup_{N' \in L_N} V_{N'}$

\noindent by applying recursively this strategy to $N$. 

\noindent Note $L_M$ the list containing $M$ and the concatenation of all $(L_N)_{N \in \minabove{M}}$.\vs

\State\label{step:redun} For each $N \in L_M$, attempt to determine the irreducibility of $V_N$ e.g. by 
identifying the associated inductively connected matroids. \hfill \scalebox{.9}{(Thm\,\ref{teo ir})}\vs
\State Remove the $N \in L_M$ corresponding to redundant irreducible components. \hfill
\scalebox{.9}{(Question\,\ref{question})}\vs
\State \textbf{Return} the list $L_M$.
\end{algorithmic}
\end{algorithm}

\begin{example}\label{example Fano}
Consider the Fano plane, which we denote $M_{\textup{Fano}}$ and is depicted in Figure~\ref{fano}. This is the point-line configuration on $[7]$ with the following set of lines:
\[\{1,2,4\},\{1,3,7\},\{1,5,6\},\{2,3,5\},\{4,5,7\},\{2,6,7\},\{3,4,6\}.\]
Using the decomposition strategy (see also~\cite{clarke2021matroid}, where this decomposition is provided), we obtain the irreducible decomposition of $V_{\Ccal(M_{\textup{Fano}})}$: 
\begin{equation}\label{dec fano 2}
V_{\Ccal(M_{\text{Fano}})}=V_{U_{2,7}}\bigcup_{i=1}^{7}V_{M_{\text{Fano}}(i)}\bigcup_{j=1}^{7}V_{A_{j}\rq}\bigcup_{k=1}^{7}V_{B_{k}\rq},\end{equation}
where the matroids $M_{\textup{Fano}}(i),A_{j}\rq$ and $B_{k}\rq$ consist of the following matroids (see 
Figure~\ref{figure 4}):
\begin{itemize}
\item The matroids $M_{\textup{Fano}}(i)$, for $i \in [7]$, obtained from $M_{\textup{Fano}}$ by declaring the point $i$ to be a loop.
\item The matroids $A_{j}'$, for $j\in [7]$, consisting of a line of $M_{\text{Fano}}$, with the remaining four points coinciding outside this line;
\item The matroids $B_{k}'$, for $k \in [7]$, consist of a line containing three double points together with a free point $k$ outside this line, where points lying on a common line of $M_{\textup{Fano}}$ with $k$ form a double point.
\end{itemize}

\begin{figure}[H]
    \centering

\begin{subfigure}[b]{0pt}
  \phantomsubcaption
  \label{fig:3a}
\end{subfigure}%
\begin{subfigure}[b]{0pt}
  \phantomsubcaption
  \label{fig:3b}
\end{subfigure}%
\begin{subfigure}[b]{0pt}
  \phantomsubcaption
  \label{fig:3c}
\end{subfigure}
\begin{subfigure}[b]{0pt}
  \phantomsubcaption
  \label{fig:3d}
\end{subfigure}%
    \includegraphics[width=0.8\textwidth, trim=0 0 0 0, clip]{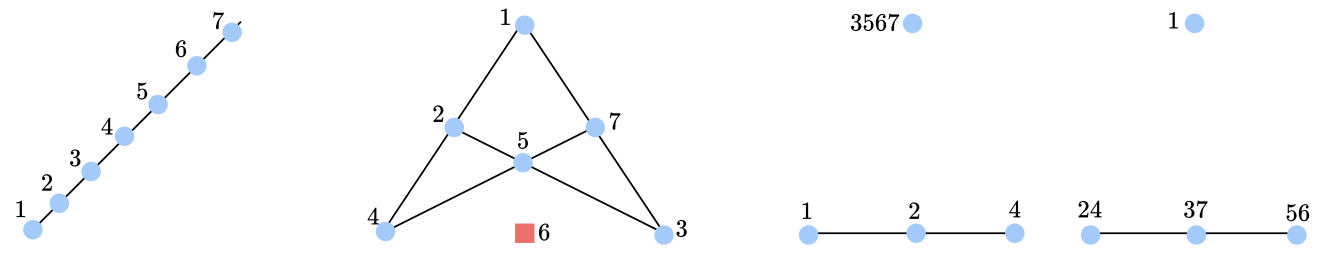}

\caption{(a) Uniform matroid $U_{2,7}$; (b) Matroid $M_{\textup{Fano}}(6)$; (c) $A_{1}'$; (d) $B_{1}'$.}

    \label{figure 4}
\end{figure}
\end{example}

\begin{remark}
This strategy can be significantly improved as follows. On each call, we first reduce $M$ to a simple matroid $M_{\textup{red}}$ by removing loops and identifying double points (see Subsection~\ref{ssec:remove2loops}). The advantage of this is that while the Theorem~\ref{teo ir} cannot be applied to the non-simple matroid $M$ in cases~1 and~2, it can be applied to its simple reduction $M_{\textup{red}}$. Thus, we speed up the termination of the strategy and significantly extend the range of problems that can be addressed. We use this approach in Section~\ref{examples}.

The drawback of this optimization is that the list of matroids given as output does not directly provide the components of the decomposition. However, these can be determined by converting certain simple points back to double points, which is done by adding identical copies of them and also adding back the loops. At the level of irreducible components of circuit varieties, this corresponds to introducing identical copies, up to non-zero scalars, of the variables associated with double points and adding zero vectors for the loops. This operation preserves the irreducibility.
\end{remark}

\subsection{Reduction to labeled hypergraphs}\label{ssec:reduchyper}

Our approach to decomposing the circuit variety of $M$ begins by reducing the problem 
to that of designing an algorithm for computing the maximal matroid degenerations of $M$ 
(see Question~\ref{quest}).
The first step of this algorithm is to degenerate $M$ by declaring a new subset to be dependent. 
Such a declaration, however, typically forces additional unintended dependencies, 
so that the resulting structure may fail to satisfy the matroid axioms.  
The task is therefore to identify the maximal matroids that include all these
induced dependencies. These dependencies admit a natural encoding in terms of a hypergraph, 
i.e. a collection of subsets of the ground set.  
This leads to the problem of determining the maximal matroids whose dependencies contain 
those prescribed by a fixed hypergraph $\Delta$. 

To address this, we refine the problem using labeled hypergraphs, where each subset is assigned a number indicating a bound on its rank. This definition depends on a fixed integer $n$, which we assume to be constant throughout this section.
Using this notion, we reduce Question~\ref{quest} to the  more elementary Questions~\ref{quest:compare} and~\ref{quest 21}.


\begin{definition}\label{def:hyper}
A {\em labeled hypergraph} $\Delta$ on the vertex set $[d]$ is a collection of subsets of $[d]$, called edges, satisfying the following properties:
\begin{enumerate}[label=(\roman*)]
\item\label{def:hyper1} each edge is assigned a label: Type~$i$, for some $0\leq i\leq n-1$;
\item\label{def:hyper2} no pair of edges, $e_{1}, e_{2} \in \Delta$, of the same type satisfy $e_{1} \subsetneq e_{2}$.
\item\label{def:hyper3} if an edge $e$ is of Type~$i$, then $\size{e}\geq i+1$.
\end{enumerate}
The elements of $\Delta$ are called edges,
and $\Delta_{i}$ denotes the collection of edges of Type~$i$. For simplicity, we will typically refer to a labeled hypergraph as a hypergraph.
\end{definition}

This provides a more compact representation of dependencies compared to standard hypergraphs, as illustrated in the following examples.

\begin{example}
To encode that every 3-element subset of $\{1,\ldots,7\}$ is dependent, a hypergraph would require an explicit listing of these $35$ subsets. In contrast, this can be encoded by the labeled hypergraph consisting of the single edge $\{1,\ldots,7\}$ labeled with Type $2$. This simply records that the entire set has rank at most two. 
\end{example}

This refinement significantly reduces the number of candidate matroids to consider and makes the decomposition process more efficient.

\begin{definition}\label{induced hypergraph}
Let $\Delta$ be a collection of subsets of $[d]$ satisfying property~\ref{def:hyper1} of Definition~\ref{def:hyper}. The {\em hypergraph induced} by $\Delta$, denoted $\Delta_{\text{ind}}$, is the labeled hypergraph obtained by removing, for each $0\leq i\leq n-1$, the following sets of edges: 
\[\{e\in \Delta_{i}: \ \text{there exists $e'\in \Delta_{i}$ with $e\subsetneq e'$}\}, \quad \text{and} \quad \{e\in \Delta_{i}:\size{e}\leq i\}.\]
\end{definition}

\begin{definition}\label{def:orderhyper}
Let $\Delta$ be a labeled hypergraph and $N$ a matroid on the same ground set. We write $N\geqhyp \Delta$ if, for each $0\leq i\leq n-1$, 
the following equivalent conditions hold:
\begin{enumerate}[label=(\roman*)]
\item every $(i+1)$-subset of an edge in $\Delta_{i}$ is dependent in $N$;
\item for all $e\in \Delta_{i}$, we have $\rank_{N}(e)\leq i$.
\end{enumerate}
\end{definition}

\begin{example}\label{example: hypergraph delta}
Consider the labeled hypergraph $\Delta = \Delta_{1} \cup \Delta_{2}$ on $[7]$, where
\[
\Delta_{1} = \{\{3,7\}\}, \quad 
\Delta_{2} = \{\{1,2,4\}, \{1,3,7\}, \{1,5,6\}, \{2,3,5\}, \{4,5,7\}, \{3,4,6\}, \{2,6,7\}\}.
\]
The matroids $A_{1}'$ and $B_{1}'$ from Example~\ref{example Fano} (see Figures~\ref{fig:3c} and~\ref{fig:3d}) both satisfy
\[
A_{1}' \geqhyp \Delta \quad \text{and} \quad B_{1}' \geqhyp \Delta.
\]
\end{example}

The following definition shows how to encode a matroid as a labeled hypergraph.

\begin{definition}\label{deltam}
Let $M$ be a matroid of rank $n$ on $[d]$. The labeled hypergraph $\Delta_{M}$ on $[d]$ associated to $M$ is defined such that:
\begin{itemize}
\item for each $0\leq i\leq n-1$, the edges of Type~$i$ of $\Delta_{M}$ are precisely the cyclic flats of $M$ of rank $i$.
\end{itemize}
\end{definition}

\begin{example}
Consider the Fano plane $M_{\textup{Fano}}$ from Example~\ref{example Fano}. The corresponding labeled hypergraph $\Delta_{M_{\textup{Fano}}}$ is
\[
\Delta_{M_{\textup{Fano}}} = \{\{1,2,4\}, \{1,3,7\}, \{1,5,6\}, \{2,3,5\}, \{4,5,7\}, \{2,6,7\}, \{3,4,6\}\},
\]
where each edge of the hypergraph is of Type~2.
\end{example}

The following lemma gives an equivalent characterization for the matroid degenerations of a given matroid $M$, in terms of its labeled hypergraph $\Delta_{M}$.

\begin{lemma}\label{delta}
A matroid $N$ satisfies $N\leq M$ if and only if $N\geqhyp \Delta_{M}$.
\end{lemma}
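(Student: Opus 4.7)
My plan is to establish the two directions separately. For the forward implication $N \geq M \Rightarrow N \geqhyp \Delta_M$, I will take any Type-$i$ edge $e$ of $\Delta_M$, which by Definition~\ref{deltam} is precisely a cyclic flat of $M$ of rank $i$. Since $\rank_M(e) = i$, every $(i+1)$-subset of $e$ is dependent in $M$, and therefore also dependent in $N$ by the hypothesis $\mathcal{D}(M) \subseteq \mathcal{D}(N)$. Invoking the equivalence of conditions~(i) and~(ii) in Definition~\ref{def:orderhyper} then yields $\rank_N(e) \leq i$ for every $e \in \Delta_M$, that is, $N \geqhyp \Delta_M$.

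For the converse $N \geqhyp \Delta_M \Rightarrow N \geq M$, I will verify that every dependent set of $M$ is dependent in $N$; since dependent sets contain circuits, it suffices to handle circuits. The central idea is, for each circuit $C$ of $M$, to consider its closure $F := \closure{C}$, a flat of rank $|C|-1$. If $F$ turns out to be a cyclic flat, then $F$ belongs to $\Delta_M$ as a Type-$(|C|-1)$ edge, and the hypothesis $N \geqhyp \Delta_M$ gives $\rank_N(F) \leq |C|-1 < |C|$; since $C \subseteq F$, this forces $C$ to be dependent in $N$, as required. To pass from "$C$ dependent in $N$" to "every $D \in \mathcal{D}(M)$ dependent in $N$", I simply use that any such $D$ contains some circuit of $M$ and supersets of dependent sets remain dependent.

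The main obstacle is to verify that $F = \closure{C}$ is indeed a cyclic flat of $M$, i.e., a union of circuits of $M$. Points of $C$ are trivially covered by the circuit $C \subseteq F$. For $p \in F \setminus C$, the definition of closure gives $\rank_M(C \cup \{p\}) = \rank_M(C) = |C|-1$ while $|C \cup \{p\}| = |C|+1$; since $p$ lies in the closure of $C$, a standard matroid argument (applying the circuit axiom~(iii) of Definition~\ref{def:matroidcircuit} to $C$ and a circuit contained in $(C\setminus\{c\}) \cup \{p\}$ for some $c \in C$) produces a circuit $C_p$ of $M$ with $p \in C_p \subseteq C \cup \{p\} \subseteq F$. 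Letting $p$ range over $F \setminus C$ then exhibits $F$ as a union of circuits of $M$, completing the argument.
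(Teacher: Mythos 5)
Your proof is correct and follows essentially the same route as the paper: the forward direction unwinds the rank inequality $\rank_N(e)\leq\rank_M(e)$ for cyclic flats $e$, and the converse finds, for each circuit $c$ of $M$, an edge of $\Delta_M$ of rank $|c|-1$ containing it. The only substantive difference is that you explicitly identify that edge as $\closure{c}$ and verify it is a cyclic flat (which the paper merely asserts without proof); that verification is sound, though the circuit elimination axiom you invoke is not actually needed — it suffices to observe that $(C\setminus\{c\})\cup\{p\}$ is dependent of rank $|C|-1$ and so contains a circuit through $p$.
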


\begin{proof}
If $N\leq M$, then for any edge $e\in (\Delta_{M})_{i}$ and $0\leq i\leq n-1$, it holds that $\rank_{N}(e)\leq \rank_{M}(e)$. Since $\rank_{M}(e)=i$, it follows that $\rank_{N}(e)\leq i$, which implies $N\geqhyp \Delta_{M}$. 

Conversely, suppose $N\geqhyp \Delta_{M}$, and let $c$ be an arbitrary circuit of $M$. Let $i=\size{c}$. Then, $c$ is contained within a cyclic flat of $M$ of rank $i-1$, so there exists $e\in (\Delta_{M})_{i-1}$ with $c\subset e$. Since $N\geqhyp \Delta_{M}$, we have $\rank_{N}(c)\leq \rank_{N}(e)\leq i-1$, implying that $c$ is dependent in $N$. Since $c$ is an arbitrary circuit, this demonstrates that $N\leq M$.
\end{proof}

\noindent We now exploit this 
characterization to formulate Question~\ref{quest} in term of 
hypergraphs.
\begin{definition}
\noindent The set of all maximal matroid degenerations of a 
hypergraph $\Delta$ is:
\[
    \minabovehyp{\Delta}=\minposet{N:N\geqhyp \Delta \ \text{and $\rank(N)\leq n$}}.
\]
\end{definition}

\begin{example}
Consider the labeled hypergraph $\Delta$ from Example~\ref{example: hypergraph delta}. The matroids $A_{1}'$ and $B_{1}'$ from Example~\ref{example Fano} (see Figures~\ref{fig:3c} and~\ref{fig:3d}) both belong to $\minabovehyp{\Delta}$.
\end{example}

\begin{lemma}\label{lema}
Let $M$ be a matroid on $[d]$. For $e\subset [d]$
denote by $\Delta_{e}$ the hypergraph $\Delta_{M}\cup \{e\}$, where $e$ is assigned Type~$(\size{e}-1)$.
Then, the following holds:
\begin{equation}\label{equality}
\minabove{M}=\minposet{
\bigcup_{e\not \in \mathcal{D}(M)}
\minabovehyp{\Delta_{e}} 
}.\end{equation}
\end{lemma}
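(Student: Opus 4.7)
The plan is to deduce \eqref{equality} from the following elementary poset fact: \emph{if $S\subseteq T$ and every element of $T$ dominates some element of $S$ (i.e.\ for each $t\in T$ there exists $s\in S$ with $s\leq t$), then $\minposet{S}=\minposet{T}$.} The verification is routine in both directions. I would apply it with $T=\set{N \text{ matroid on } [d] : N>M}$, so that $\minposet{T}=\minabove{M}$ by definition, and with $S=\bigcup_{e\notin\mathcal{D}(M)}\minabovehyp{\Delta_e}$, the right-hand side of~\eqref{equality}.

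For the inclusion $S\subseteq T$, I would take any $N\in\minabovehyp{\Delta_e}$ with $e\notin\mathcal{D}(M)$. Since $\Delta_M\subseteq\Delta_e$, one has $N\geqhyp\Delta_M$, hence $N\geq M$ by Lemma~\ref{delta}. Moreover $e\in\Delta_e$ has Type $|e|-1$, so $\rank_N(e)\leq|e|-1$, i.e.\ $e\in\mathcal{D}(N)$; combined with $e\notin\mathcal{D}(M)$ this gives $N>M$.

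For the domination step, given $N\in T$ I would choose $e$ of minimum cardinality in $\mathcal{D}(N)\setminus\mathcal{D}(M)$. A short argument shows that $e$ is a circuit of $N$: any proper subset $e'\subsetneq e$ dependent in $N$ would, by minimality of $|e|$, satisfy $e'\in\mathcal{D}(M)$, and then $e\supseteq e'$ would itself lie in $\mathcal{D}(M)$, a contradiction. Thus $\rank_N(e)=|e|-1$, and together with $N\geqhyp\Delta_M$ this yields $N\geqhyp\Delta_e$. Finally any basis of $N$ is independent in $M$ (since $\mathcal{D}(M)\subseteq\mathcal{D}(N)$), so $\rank(N)\leq n$; therefore $N$ sits in the poset over which $\minabovehyp{\Delta_e}$ is taken, and this provides some $N'\in\minabovehyp{\Delta_e}\subseteq S$ with $N'\leq N$.

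Combining these two observations with the poset fact yields $\minposet{S}=\minposet{T}=\minabove{M}$, which is exactly~\eqref{equality}. The only subtle point is the choice of $e$ in the domination step: taking $e$ to be a minimum-size element of $\mathcal{D}(N)\setminus\mathcal{D}(M)$ simultaneously ensures that $e$ is a circuit of $N$ and that $e\notin\mathcal{D}(M)$, which is what makes the inclusion into the correct $\minabovehyp{\Delta_e}$ work; the rest of the argument is bookkeeping through Lemma~\ref{delta}.
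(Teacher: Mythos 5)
Your proof is correct and follows essentially the same route as the paper: both hinge on Lemma~\ref{delta} and on identifying a circuit of $N$ that is independent in $M$ (your minimum-cardinality $e\in\mathcal{D}(N)\setminus\mathcal{D}(M)$ is precisely such a circuit, and the paper invokes such a $C$ directly). The only difference is organizational — you package the argument into a general $\minposet{S}=\minposet{T}$ poset fact applied with $T=\{N:N>M\}$, whereas the paper proves the two inclusions of~\eqref{equality} directly, using a short contradiction for $\supseteq$; the mathematical content is the same.
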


\begin{proof}

\smallskip
To prove the inclusion $\subseteq$ in~\eqref{equality}, let $N\in \minabove{M}$. Since $N<M$, there exists a circuit $C$ of $N$ that is independent in $M$. By Lemma~\ref{delta}, we know $N\geqhyp \Delta_{M}$, which implies $N\geqhyp \Delta_{C}$. Furthermore, since $N\in \minabove{M}$, it follows that $N\in \minabovehyp{\Delta_{C}}$. This establishes the inclusion $\subseteq$.

To prove the other inclusion, let $N$ be a matroid belonging to the right-hand side of~\eqref{equality}, say $N\in \minabovehyp{\Delta_{e}}$. 
Since $N\geqhyp \Delta_{M}$, it follows that $N\leq M$. Additionally, since $e\in \mathcal{D}(N)$ and $e\not \in \mathcal{D}(M)$, we have $N<M$. To conclude, we must show that $N\in \minabove{M}$. Assume for the sake of contradiction, that there exists $N\rq\in \minabove{M}$ such that $N<N\rq<M$. Using the previous argument, $N\rq$ belongs to the set on the right-hand side of Equation~\eqref{equality}. However, since both $N$ and $N\rq$ are maximal matroids in this set, this leads to a contradiction. Consequently, $N\in \minabove{M}$, as required. 
\end{proof}
In conclusion, according to Lemma~\ref{lema}, an answer to Question~\ref{quest} can be derived from one of the following two ones. Indeed, combining these allows one to directly compute the right-hand side of  \eqref{equality}.
\begin{subquestion}\label{quest:compare}
Given two matroids $M$ and $M'$, design an algorithm to decide if $M' \leq M$?
\end{subquestion}


\begin{subquestion}\label{quest 21}
Given a labeled hypergraph $\Delta$, design an algorithm to compute $\minabovehyp{\Delta}$?
\end{subquestion}

Solving these two problems will be the focus of Sections~\ref{general rank} and \ref{algorithm}. In the remainder of this section, we present useful reductions that will be extensively used in our algorithms.

\vspace{-2mm}
\subsection{Reduction to simple matroids by removing loops and double points}\label{ssec:remove2loops}

In this section, we describe a reduction to Question~\ref{quest 21} that allows us to assume 
the labeled hypergraph $\Delta$ satisfies $\Delta_{0}=\Delta_{1}=\emptyset$.  
The motivation for this reduction is explained in the following remark.

\begin{remark}
The hypergraph $\Delta$ can be reduced to a simpler labeled hypergraph $\Delta_{\mathrm{red}}$ 
by removing edges of size one and identifying elements of the ground set that lie in an edge of size two.  
This yields a smaller hypergraph, thereby accelerating the termination of the strategy.
\end{remark}

\begin{definition}\label{def:designloop}
For each $k\in [d]$, let $M(k)$ denote the matroid obtained by designating $k$ as a loop. The circuits of this matroid are given by
$\Ccal(M(k))=\Ccal(M\backslash k)\cup \{\{k\}\}$.

Similarly, for a labeled hypergraph $\Delta$ on $[d]$ and $k\in [d]$, we denote by $\Delta \setminus \{k\}$ the labeled hypergraph on $[d]\setminus \{k\}$ obtained by removing $k$, that is whose edges are given by 
\[
    \big\{ e - \{k\} : \; e \in \Delta_i \text{ \; and \;} 
    \left|e - \{k\}\right| \geq i+1 \big\}
\]
that are assigned Type~$i$, for all $0\leq i \leq n-1$.
\end{definition}

\begin{lemma}\label{lem:removeloops}
Let $\Delta$ be a labeled hypergraph and $k\in \Delta_{0}$. There is a bijection, preserving both order and rank, between the sets
\[
 \{N:N\geqhyp \Delta\}, \quad \text{and}
\quad \{N':N'\geqhyp \Delta \setminus \{k\}\}.
\]
\end{lemma}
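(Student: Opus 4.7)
The plan is to exhibit an explicit bijection between the two sets. The key observation is that since $k \in \Delta_0$, the edge $\{k\}$ has Type~$0$, so Definition~\ref{def:orderhyper} forces $\rank_N(\{k\}) \leq 0$ for every $N \geqhyp \Delta$, i.e., $k$ is a loop in $N$. Thus the natural candidate is $\varphi : N \mapsto N \setminus \{k\}$, with inverse $\psi$ that takes a matroid $N'$ on $[d] - \{k\}$ and extends it to $[d]$ by declaring $k$ a loop.

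First, I would verify that $\varphi$ lands in the target set. Every edge of $\Delta \setminus \{k\}$ has the form $e - \{k\}$ where $e \in \Delta_i$ and $|e - \{k\}| \geq i+1$. Since $k$ is a loop of $N$, one has $\rank_N(e - \{k\}) = \rank_N(e) \leq i$, and this rank coincides with $\rank_{N \setminus \{k\}}(e - \{k\})$ since the set is disjoint from $k$. Hence $\varphi(N) \geqhyp \Delta \setminus \{k\}$.

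Next, I would verify that $\psi$ lands in $\{N : N \geqhyp \Delta\}$. Pick any $e \in \Delta_i$. If $k \notin e$, then $e$ itself is an edge of $\Delta \setminus \{k\}$ of Type~$i$ (since $|e| \geq i+1$), so $\rank_{\psi(N')}(e) = \rank_{N'}(e) \leq i$. If $k \in e$, then because $k$ is a loop in $\psi(N')$, $\rank_{\psi(N')}(e) = \rank_{N'}(e - \{k\})$; if $|e - \{k\}| \geq i+1$ this is bounded by $i$ since $e - \{k\} \in (\Delta \setminus \{k\})_i$, and otherwise the cardinality bound $\rank_{N'}(e - \{k\}) \leq |e - \{k\}| \leq i$ suffices. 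This second subcase is the only mildly delicate point and is the main obstacle; it is why property~\ref{def:hyper3} of Definition~\ref{def:hyper} is built into the construction of $\Delta \setminus \{k\}$ in Definition~\ref{def:designloop}.

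Finally, I would check that $\varphi$ and $\psi$ are mutual inverses, which is immediate: deleting a loop and then reinstating it gives back the original matroid, and vice versa. For order preservation, note that $N_1 \geq N_2$ iff $\mathcal{D}(N_2) \subseteq \mathcal{D}(N_1)$; since $k$ is a loop on both sides, the dependent sets containing $k$ are automatic, so the order is determined entirely by dependent sets in $[d] - \{k\}$, which are preserved by both $\varphi$ and $\psi$. Rank preservation follows because removing or adding a loop leaves the size of any basis unchanged, so $\rank(N) = \rank(\varphi(N))$ and $\rank(\psi(N')) = \rank(N')$.
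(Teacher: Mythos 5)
Your proof is correct and follows essentially the same approach as the paper's one-line argument, which simply asserts that $N\geqhyp \Delta$ if and only if the loop-deleted matroid satisfies $\geqhyp \Delta\setminus\{k\}$; you merely spell out the bijection $N\mapsto N\setminus\{k\}$ and its inverse in detail and verify the small-cardinality subcase carefully. In particular, your observation that the edges with $|e-\{k\}|\leq i$ are discarded from $\Delta\setminus\{k\}$ but harmlessly so (since rank is bounded by cardinality) is exactly the point that justifies the construction in Definition~\ref{def:designloop}.
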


\begin{proof}
First note that, according to Definition~\ref{def:orderhyper}, the matroids in the second set have ground set $[d]\setminus \{k\}$.
Moreover, any matroid $N$ satisfying $N\geqhyp \Delta$ must have $\{k\}$ as a loop. Then the result follows from the fact that $N\geqhyp \Delta$ if and only if $N(k)\geqhyp \Delta\setminus \{k\}$.
\end{proof}

By applying the previous lemma and after removing all vertices of $\Delta_0$ from the ground set, one can address Question~\ref{quest 21} assuming that $\Delta_{0}=\emptyset$. We now proceed similarly for $\Delta_1$ by introducing the concept of the {\em reduction} of a labeled hypergraph. 

\begin{definition}\label{redu}
Let $\Delta$ be a labeled hypergraph on $[d]$ with $\Delta_{0}=\emptyset$. The reduction $\Delta_{\text{red}}$ of $\Delta$ is defined as follows.\vspace*{-.5em}
\begin{enumerate}[label=\bf\footnotesize\arabic*:]
\item Define an equivalence relation $\sim_{\Delta}$ on $[d]$ as: $i\sim_{\Delta} j$ if $\{i,j\}\subset x$ for some $x\in \Delta_{1}$. Let $\mathcal{Q}$ be the set of minimal representatives in each equivalence class, denoted by $\overline{i}$.
\vs 
\item The reduced hypergraph $\Delta_{\text{red}}$ is constructed on the vertex set $\mathcal{Q}$ by modifying the edges of $\Delta$ as follows. 
For each $2\leq i\leq n$,
\begin{itemize}
\item for\,each $e=\{j_{1},\ldots,j_{k}\}\in \Delta_{i}$, compute its representative $\overline{e}=\{\overline{j_{1}},\ldots,\overline{j_{k}}\}$ in $\mathcal{Q}$;
\item \vspace*{.25em} if $\size{\overline{e}}\geq i+1$, include $\overline{e}$ in $\Delta_{\text{red}}$ and assign to it Type~$i$.
\end{itemize}
\end{enumerate}
Observe that $\Delta_{\text{red}}$ contains no edges of Type~$0$ or Type~$1$.
\end{definition}

\begin{example}
Consider the labeled hypergraph $\Delta$ from Example~\ref{example: hypergraph delta}. The reduced hypergraph $\Delta_{\text{red}}$ on $[6]$, obtained by identifying $3$ with $7$, is
\[
\Delta_{\text{red}} = \{\{1,2,4\}, \{1,5,6\}, \{2,3,5\}, \{3,4,5\}, \{3,4,6\}, \{2,3,6\}\},
\]
where each edge of the hypergraph is of Type~2.
\end{example}

We have the following lemma for the reduction of a labeled hypergraph.

\begin{lemma}\label{lem:remove2points}
Let $\Delta$ be a labeled hypergraph with $\Delta_{0}=\emptyset$. There is a bijection, preserving both order and rank, between the sets  
\begin{equation}\label{sets}
\{N: \Ccal_{1}(N)=\emptyset,\, N\geqhyp \Delta\},\quad  
\text{and} 
\quad \{N\rq: \Ccal_{1}(N\rq)=\emptyset,\, 
N\rq \geqhyp \Delta_{\textup{red}}\}.
\end{equation}
Following Definitions~\ref{def:orderhyper} and \ref{redu}, the matroids in the latter set have ground set $\mathcal{Q}$.
\end{lemma}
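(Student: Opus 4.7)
The plan is to exhibit explicit inverse maps between the two sets, via restriction to $\mathcal{Q}$ on one side and duplication of parallel points on the other, and then verify that these maps preserve order and rank.

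The key preliminary observation is the following: if $N$ belongs to the left-hand set in \eqref{sets} and $i \sim_{\Delta} j$, then $i$ and $j$ form a double point in $N$. Indeed, by definition there exists $x \in \Delta_{1}$ with $\{i,j\} \subseteq x$, so $\rank_{N}(\{i,j\}) \leq \rank_{N}(x) \leq 1$ by Definition~\ref{def:orderhyper}, and the no-loop hypothesis $\mathcal{C}_{1}(N) = \emptyset$ forces equality. In particular, for any subset $A \subseteq [d]$, the closure of $A$ in $N$ coincides with the closure of $\bar{A} := \{\bar{i} : i \in A\}$, and $\rank_{N}(A) = \rank_{N}(\bar{A})$.

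Next I would define the forward map $\phi(N) := N|\mathcal{Q}$ and the inverse map $\psi(N')$ as the matroid on $[d]$ with rank function $A \mapsto \rank_{N'}(\bar{A})$; the latter is indeed a matroid, obtained from $N'$ by adding parallel copies, and obviously has no loops since $N'$ has none. For well-definedness of $\phi$, I would check $N|\mathcal{Q} \geqhyp \Delta_{\textup{red}}$: an edge $\bar{e} \in (\Delta_{\textup{red}})_{i}$ comes from some $e \in \Delta_{i}$ with $i \geq 2$ and $|\bar{e}| \geq i+1$, and by the parallel observation $\rank_{N|\mathcal{Q}}(\bar{e}) = \rank_{N}(\bar{e}) = \rank_{N}(e) \leq i$. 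For $\psi$, one checks $\psi(N') \geqhyp \Delta$ edge by edge: for $e \in \Delta_{1}$ all members of $e$ lie in the same $\sim_{\Delta}$-class, so $\bar{e}$ is a singleton and $\rank_{\psi(N')}(e) = \rank_{N'}(\bar{e}) \leq 1$; for $e \in \Delta_{i}$ with $i \geq 2$, either $|\bar{e}| \leq i$ and the bound is automatic, or $|\bar{e}| \geq i+1$ and then $\bar{e} \in (\Delta_{\textup{red}})_{i}$, giving $\rank_{\psi(N')}(e) = \rank_{N'}(\bar{e}) \leq i$.

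Finally I would verify $\psi \circ \phi = \mathrm{id}$ and $\phi \circ \psi = \mathrm{id}$: the first uses the parallel observation to rewrite $\rank_{\psi(\phi(N))}(A) = \rank_{N}(\bar{A}) = \rank_{N}(A)$; the second is immediate because $\bar{A} = A$ whenever $A \subseteq \mathcal{Q}$. Order preservation in both directions follows directly from the characterization of $\leq$ via dependent sets in Definition~\ref{dependency}, combined with the fact that a set $A \subseteq \mathcal{Q}$ is dependent in $N$ iff it is dependent in $N|\mathcal{Q}$, and that $A \subseteq [d]$ is dependent in $\psi(N')$ iff $\bar{A}$ is dependent in $N'$. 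Rank preservation is immediate since every element of $[d]$ is parallel to one in $\mathcal{Q}$, so $\rank(N) = \rank(N|\mathcal{Q})$.

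The main subtlety to get right is the bookkeeping with edge types in the definition of $\Delta_{\textup{red}}$: Type-$1$ edges of $\Delta$ collapse to singletons and therefore disappear from $\Delta_{\textup{red}}$, while higher-type edges $e$ with $|\bar{e}| \leq i$ are also dropped, so the translation of constraints between $\Delta$ and $\Delta_{\textup{red}}$ must be done carefully in both directions. Once the parallel observation is in hand, these checks are routine.
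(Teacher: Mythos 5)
Your proof is correct and follows essentially the same approach as the paper's (much terser) proof: both observe that $N\geqhyp\Delta$ together with looplessness forces all $\sim_\Delta$-related points to be parallel in $N$, and then set up the bijection by identifying/duplicating these parallel points. The paper delegates the well-definedness, inverse, and order/rank-preservation checks to ``straightforward to verify''; you have simply carried them out explicitly, including the careful bookkeeping of which edges survive in $\Delta_{\textup{red}}$.
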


\begin{proof}
Let $N$ be a matroid belonging to the set on the left-hand side of~\eqref{sets}. Since $N\geqhyp \Delta$ and $N$ has no loops, it follows that every pair of elements within an edge of $\Delta_{1}$ forms a circuit by  Definition~\ref{def:orderhyper}.(i).
Consequently, $N$ uniquely determines a matroid $N\rq$ with $N\rq \geqhyp \Delta_{\text{red}}$ by identifying double points in $N$. It is straightforward to verify that this assignment is bijective and preserves both order and rank.
\end{proof}

In conclusion to the above two lemmas, after removing all vertices of $\Delta_0$ from the ground set, and then reduction, one can address Question~\ref{quest 21} assuming that $\Delta_{0}=\Delta_1=\emptyset$.


\section{Algorithm for identifying maximal matroid degenerations}\label{general rank}

We introduce an algorithm to answer Question~\ref{quest} and determine the maximal matroid degenerations of a given matroid $M$, on the ground set $[d]$ and of rank $n$.

\vspace{-3mm}\subsection{Comparing matroids}

As outlined in Subsection~\ref{ssec:reduchyper}, the first step in addressing Question~\ref{quest} is to answer Question~\ref{quest:compare}. We therefore present Algorithm~\ref{alg:algocompare5} for comparing matroids.

\begin{algorithm}[h]
\caption{Comparison of matroids}\label{alg:algocompare5}
\begin{algorithmic}[1]
\Require A pair of matroids $M$ and $M'$ on the same ground set, both of rank at most $n$.
\Ensure \texttt{True} if $M' \leq M$, else \texttt{False}.\vs\vs
\State If any of the following test fails, immediately return \texttt{False}, else continue.\vs
\State \emph{Compute} $\Delta_M$ and $\Delta_{M'}$ as defined in Definition~\ref{deltam}, and denote the resulting labeled hypergraphs by $\Delta$ and $\Delta'$, respectively.\vs
\State\label{step:compare:3}\emph{Check} that 
        $\Delta_0 \subset\Delta_0'.$\vs
\State \emph{Reduce} both $\Delta$ and $\Delta'$ with respect to the loops in $\Delta_0'$ (see Definition~\ref{def:designloop}), and denote the resulting labeled hypergraphs on $[d] \setminus \Delta_0'$ by the same symbols.\vs
\State\label{step:compare:5}\emph{Check} that for every $x \in \Delta_1$, there exists $y \in \Delta'_1$ such that $x \subset y$.\vs
\State\label{step:compare:6}\emph{Reduce} both $\Delta$ and $\Delta'$ with respect to the double points in $\Delta_1'$ (see Definition~\ref{redu}), and denote the resulting labeled hypergraphs by $\mathcal{Q}$ and $\mathcal{Q}'$, respectively.\vs

\State\label{step:compare:7}\emph{Check} that for all $2\leq i\leq n-1$ and all
$x \in \Delta_i$,
    $$ 
    \exists A \subset x \ \text{with $\size{A}\leq i$}\ \text{and} \  \exists y \in \Delta_{i-\size{A}}' \ \text{such that} \ (x\setminus A) \subset y.$$
\State If all tests have been passed, return \texttt{True}.
\end{algorithmic}
\end{algorithm}

\begin{example}
In this example, we illustrate Algorithm~\ref{alg:algocompare5} for comparing matroids. 
Consider the matroids $A_{1}'$ and $B_{1}'$ from Example~\ref{example Fano} (see Figures~\ref{fig:3c} and~\ref{fig:3d}). Recall that their corresponding matroid varieties occur as components of $V_{\Ccal(M_{\mathrm{Fano}})}$, as established in Example~\ref{example Fano}. We can easily see that these matroids are not comparable in the weak order. 
Indeed, they agree through tests~1–4, but fail at test~5: the set of double 
points of neither matroid is contained in that of the other. Hence, they 
are not comparable.
\end{example}

The remainder of this subsection is devoted to proving the correctness of Algorithm~\ref{alg:algocompare5}. 
The crucial step of Algorithm~\ref{alg:algocompare5} is step~\ref{step:compare:7}, which determines whether the rank of a certain subset $X$ of the ground set of a matroid $N$ is at most $i$, using only the information encoded in the collection of cyclic flats $\Delta_{N}$. 
As the following lemma shows, this can be decided by checking whether $X$ is contained in a cyclic flat of rank $i-j$ after removing a subset of size $j$, which corresponds precisely to the verification performed in step~\ref{step:compare:7}. 
We make this formal in Lemma~\ref{lemma cyclic flats 2}.


\begin{lemma}\label{lemma cyclic flats 2}
Let $N$ be a matroid of rank at most $n$ on $[d]$, and let $\Delta$ denote the hypergraph $\Delta_{N}$. Then, for any $2 \leq i \leq n-1$ and any subset $X \subseteq [d]$ with $\lvert X \rvert \geq i+1$, the following conditions are equivalent:
\begin{itemize}
\item[{\rm (i)}] $\rank(X)\leq i$.
\item[{\rm (ii)}] 
There exists a subset $A\subset X$ with $\size{A}\leq i$ and an element $y\in \Delta_{i-\size{A}}$ such that ($X\setminus A)\subset y$.
\end{itemize}
\end{lemma}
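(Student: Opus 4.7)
My plan is to build a cyclic flat of $N$ by ``purifying'' the closure of $X$ (i.e., removing its coloops), and then take $A$ to be the elements of $X$ lying outside this cyclic flat, padded to the required size.

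Concretely, I would let $F := \closure{X}$, a flat of rank $r := \rank(X) \leq i$ containing $X$; since $|X| \geq i+1 > r$, the set $X$ (and hence $F$) is dependent. Let $Z \subset F$ denote the set of coloops of $N|F$, and set $Y := F \setminus Z$. The central structural claim---and the main point to verify carefully---is that $Y$ is a cyclic flat of $N$ of rank $s := r - |Z|$. Cyclicity is immediate: any non-coloop $y$ of $N|F$ lies in a circuit $C \subset F$ whose elements are themselves non-coloops, hence $C \subset Y$. Flatness follows because any $x \in \closure{Y} \setminus Y$ would lie in $Z$ (as $\closure{Y} \subset F$) while also belonging to a circuit inside $Y \cup \{x\} \subset F$, contradicting $x$ being a coloop of $N|F$. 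The rank identity $\rank(Y) = r - |Z|$ follows from $Z$ being independent, disjoint from $\closure{Y} = Y$, and contained in every basis of $F$.

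With $Y$ in hand, the choice of $A$ is a short counting step: since $Z$ is independent, $|X \cap Z| \leq |Z| = r - s \leq i - s$, and since $|X| \geq i+1 > i - s$, I can extend $X \cap Z$ to a subset $A \subset X$ with $|A| = i - s$. This gives $|A| \leq i$ and $X \setminus A \subset X \cap Y \subset Y$. Finally, to conclude $Y \in \Delta_{i-|A|} = \Delta_s$, I would invoke Definition~\ref{deltam}, together with the size requirement of Definition~\ref{def:hyper}\ref{def:hyper3}: $Y$ is non-empty (because $F$ is dependent, hence contains a circuit that sits inside $Y$), and being cyclic it is itself dependent, forcing $|Y| > \rank(Y) = s$.

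The main obstacle is the structural matroid lemma characterizing $Y$ as a cyclic flat of the correct rank; once that is established, the rest reduces to routine book-keeping. One mild case to watch for is $s = 0$, where $Y$ consists of loops of $N$ and one must ensure $Y$ is non-empty in order to fit the Type~$0$ size condition.
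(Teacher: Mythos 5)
Your proof is correct and rests on the same underlying mechanism as the paper's---strip coloops to obtain a union of circuits sitting inside a cyclic flat---but the technical route is noticeably different, and in one respect your version is more complete. The paper takes $A$ to be the coloops of $N|X$ and observes that $X\setminus A$ is a union of circuits, hence contained in ``a cyclic flat of rank at most $i-|A|$''; it then stops. As written this leaves a small gap: an edge $y\in\Delta_{i-|A|}$ must be a cyclic flat of rank \emph{exactly} $i-|A|$, so when $\rank(X)<i$, or when the cyclic flat drops further in rank, the set $A$ must be enlarged to compensate. You instead work with $F=\closure{X}$, remove the coloops $Z$ of $N|F$ to obtain a cyclic flat $Y$ of rank $s$, and then \emph{explicitly} pad $X\cap Z$ to a set $A\subset X$ of size exactly $i-s$; this makes the rank bookkeeping airtight. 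The only thing to double-check in your argument, which you did correctly, is that $|X\cap Z|\le |Z|=r-s\le i-s$ and $|X|\ge i+1>i-s$, so the padding is always possible, and that $Y$ is nonempty (hence $|Y|\ge s+1$, meeting the size constraint of Definition~\ref{def:hyper}\ref{def:hyper3}), which follows because $F$ contains a circuit and every circuit avoids $Z$. In short: same key idea, different choice of the ambient flat to purify, and your padding step repairs the imprecision in the paper's final sentence.
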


\begin{proof}
It is clear that (ii) implies (i). For the converse, consider the submatroid $N|X$, and let $A \subset X$ denote the set of coloops in $N|X$. Since $\rank(X) \leq i$, we have $\size{A} \leq i$. Furthermore, removing the coloops yields $\rank(X \setminus A) \leq i - \size{A}$.
Note that $N|(X \setminus A)$ has no coloops, so every element lies in a circuit. Thus, $X \setminus A$ is a union of circuits, and therefore contained in a cyclic flat of rank at most $i - \size{A}$, completing the proof.
\end{proof}

\noindent{\bf Correctness of Algorithm~\ref{alg:algocompare5}.} First, observe that if the tests in Steps~\ref{step:compare:3} and \ref{step:compare:5} fail, then $M$ contains 
a loop and a double point that are not present in $M'$, hence 
$M' \not\leq M$.

Now, suppose that both tests pass, so we are after step~\ref{step:compare:6}. According to Subsection~\ref{ssec:remove2loops}, the resulting hypergraphs correspond to matroids $N$ and $N'$, which are free of loops and double points. By Lemmas~\ref{lem:removeloops} and~\ref{lem:remove2points}, we know that $M' \leq M$ holds if and only if $N' \leq N$.
The latter condition is equivalent to the following:
\begin{itemize}
\item For every $2 \leq i \leq n-1$ and each $x \in \Delta_i$, we have $\rank_{N'}(x) \leq i$.
\end{itemize}

\noindent By applying Lemma~\ref{lemma cyclic flats 2}, the above condition is equivalent to the following:
\begin{itemize}
\item For every $2 \leq i \leq n-1$ and each $x \in \Delta_i$, there exists a subset $A \subset x$ with $\size{A} \leq i$ and an element $y \in \Delta_{i - \size{A}}$ such that $(x \setminus A) \subset y$.
\end{itemize}
This condition corresponds exactly to the check at step~\ref{step:compare:7}. Thus, we conclude that $M' \leq M$ if and only if all tests in the algorithm pass, completing the proof of correctness.\qed

\subsection{Algorithm for identifying $\minabovehyp{\Delta}$}\label{min delta}
In this subsection, we present an algorithm to answer Question~\ref{quest 21}.
All labeled hypergraphs considered here are defined on $[d]$ and constructed with respect to a fixed integer $n$. 

\medskip
We now introduce the valuation $v_{\Delta}$, which provides an upper bound on the rank function of any matroid $N \geqhyp \Delta$.

\begin{definition}
Let $\Delta$ be a labeled hypergraph. We define the {\em valuation} $v_{\Delta}:2^{[d]}\rightarrow \mathbb{Z}$ as:
\[v_{\Delta}(A)=\min\{\size{A},\ n,\ \size{A\setminus e}+i:\ \text{$0\leq i\leq n-1$ and $e\in \Delta_{i}$}\}.\]
\end{definition}

\medskip 

The following lemma plays a key role in the development of 
Algorithm~\ref{algo 21}.

\begin{lemma}\label{def m}
Let $\Delta$ be a labeled hypergraph, and assume that for all $0\leq i,j\leq n-1$ and any $e_{1}\in \Delta_{i},e_{2}\in \Delta_{j}$, the following condition holds:
\begin{equation}\label{condition}i+j\geq v_{\Delta}(e_{1}\cap e_{2})+v_{\Delta}(e_{1}\cup e_{2}).\end{equation}
Then, the set 
\begin{equation}\label{min}\Ccal=\min \big(\bigcup_{0\leq i\leq n-1} \cup_{e\in \Delta_{i}}\textstyle \binom{e}{i+1}\cup \binom{[d]}{n+1}\big)\end{equation}
forms the circuits of a matroid $M_{\Delta}$, where 
$\min$ denotes the inclusion-minimal subsets.
\end{lemma}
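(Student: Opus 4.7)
My plan is to prove the lemma by establishing that $v_{\Delta}$ is itself the rank function of a matroid $M_{\Delta}$ on $[d]$. Once this is done, the circuits of $M_{\Delta}$ are by definition the inclusion-minimal subsets $A\subseteq [d]$ with $v_{\Delta}(A)<\size{A}$, and a direct inspection of the definition of $v_{\Delta}$ shows that this condition is equivalent to $A$ containing some element of $\bigcup_{0\leq i\leq n-1}\bigcup_{e\in \Delta_{i}}\binom{e}{i+1}$ or some $(n+1)$-subset of $[d]$; taking the minimal such $A$ then recovers exactly $\Ccal$.

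The first step is to check the elementary properties of $v_{\Delta}$: namely $v_{\Delta}(\emptyset)=0$, the bounds $0\leq v_{\Delta}(A)\leq \min(\size{A},n)$, $v_{\Delta}(\{x\})\leq 1$, and monotonicity $A\subseteq B\Rightarrow v_{\Delta}(A)\leq v_{\Delta}(B)$ (since $A\subseteq B$ implies $\size{A\setminus e}+i\leq \size{B\setminus e}+i$ for every $e\in \Delta_{i}$). All of these are immediate from the definition of $v_{\Delta}$.

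The key step, and main obstacle, is to prove the submodularity of $v_{\Delta}$, namely $v_{\Delta}(A)+v_{\Delta}(B)\geq v_{\Delta}(A\cup B)+v_{\Delta}(A\cap B)$. I will split into cases according to which term attains the minimum defining $v_{\Delta}(A)$ and $v_{\Delta}(B)$. When $\size{A}$, $\size{B}$, or $n$ achieves one of these minima, the desired inequality follows directly from monotonicity and the universal bound $v_{\Delta}\leq n$. The nontrivial case is $v_{\Delta}(A)=\size{A\setminus e_{1}}+i_{1}$ and $v_{\Delta}(B)=\size{B\setminus e_{2}}+i_{2}$ for some $e_{1}\in \Delta_{i_{1}}$ and $e_{2}\in \Delta_{i_{2}}$, which is precisely where the hypothesis~\eqref{condition} comes in.

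In this case, I will first apply the identity $\size{X}+\size{Y}=\size{X\cup Y}+\size{X\cap Y}$ with $X=A\cap e_{1}$ and $Y=B\cap e_{2}$, together with the inclusions $X\cup Y\subseteq (A\cup B)\cap(e_{1}\cup e_{2})$ and $X\cap Y=(A\cap B)\cap(e_{1}\cap e_{2})$, to derive the purely set-theoretic inequality
\[
\size{A\setminus e_{1}}+\size{B\setminus e_{2}}\;\geq\; \size{(A\cup B)\setminus (e_{1}\cup e_{2})}+\size{(A\cap B)\setminus (e_{1}\cap e_{2})}.
\]
Then I will establish the auxiliary bound $v_{\Delta}(X\cup Y)\leq \size{X\setminus Y}+v_{\Delta}(Y)$ for arbitrary $X,Y\subseteq [d]$, directly from the definition using that $(X\cup Y)\setminus e\subseteq (X\setminus Y)\cup (Y\setminus e)$. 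Applied with $Y=e_{1}\cup e_{2}$ and $Y=e_{1}\cap e_{2}$, combined with monotonicity, this yields upper bounds on $v_{\Delta}(A\cup B)$ and $v_{\Delta}(A\cap B)$ in terms of $v_{\Delta}(e_{1}\cup e_{2})$ and $v_{\Delta}(e_{1}\cap e_{2})$. Assembling all these ingredients and invoking the hypothesis $i_{1}+i_{2}\geq v_{\Delta}(e_{1}\cap e_{2})+v_{\Delta}(e_{1}\cup e_{2})$ then yields submodularity, which, combined with the elementary properties above, shows that $v_{\Delta}$ is a matroid rank function and completes the proof.
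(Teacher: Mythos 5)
Your proposal is correct and takes a genuinely different route from the paper. The paper proceeds by directly verifying the circuit elimination axiom for $\Ccal$: given distinct $C_1,C_2\in\Ccal$ and $x\in C_1\cap C_2$, it exhibits parent edges $e_1,e_2$, proves the auxiliary claim $v_\Delta(e_1\cap e_2)\geq\size{C_1\cap C_2}$ (Claim~\ref{cla:vlowbound}), combines this with~\eqref{condition} to bound $v_\Delta(e_1\cup e_2)$, and then splits into two cases to locate a circuit inside $(C_1\cup C_2)\setminus\{x\}$. You instead show that $v_\Delta$ is itself a matroid rank function (monotone, bounded by cardinality, submodular), and read off the circuits as the minimal dependent sets. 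The heart of your argument — the set-theoretic identity with $X=A\cap e_1$, $Y=B\cap e_2$, the auxiliary bound $v_\Delta(X\cup Y)\leq\size{X\setminus Y}+v_\Delta(Y)$, and the invocation of~\eqref{condition} — is sound, and the case split over which term achieves each $\min$ is exhaustive (the $|A|$ and $n$ cases both follow from the auxiliary bound together with monotonicity). Your identification of the dependent sets as exactly those containing an $(i{+}1)$-subset of some $e\in\Delta_i$ or an $(n{+}1)$-subset of $[d]$ is also correct, and recovers $\Ccal$ as the minimal such sets. One advantage of your route is that it proves strictly more than the lemma asserts: it shows $v_\Delta$ is precisely the rank function of $M_\Delta$, not merely that $\Ccal$ satisfies the circuit axioms, which is structurally informative. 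The trade-off is that verifying submodularity requires more bookkeeping than the paper's direct elimination-axiom check, and the paper's Claim~\ref{cla:vlowbound} is an ingredient you avoid entirely.
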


\begin{proof}
Observe that, by Definition~\ref{def:hyper}.\ref{def:hyper2}, no element of $\Ccal$ is properly contained in another. Therefore, to verify that $\Ccal$ defines the set of circuits of a matroid, it suffices to check that $\Ccal$ satisfies the circuit elimination axiom:  
for any distinct $C_1, C_2 \in \Ccal$ and any element $x \in C_1 \cap C_2$, there exists a circuit $C_3 \in \Ccal$ such that  
$C_3 \subset (C_1 \cup C_2) \setminus \{x\}.$

Since $C_1, C_2 \in \Ccal$, there exist $e_1, e_2 \in \Delta$ such that $C_1 \subset e_1$ and $C_2 \subset e_2$. Let $0 \leq i, j < n$ such that $e_1 \in \Delta_i$ and $e_2 \in \Delta_j$. Furthermore, let $r = \size{C_1 \cap C_2}$.

\begin{claim}\label{cla:vlowbound}
    $v_{\Delta}(e_1 \cap e_2) \geq r$.
\end{claim}

\begin{proof}
Suppose, by contradiction, that $v_{\Delta}(e_1 \cap e_2) < r$. Since $\size{e_1 \cap e_2} \geq \size{C_1 \cap C_2} = r$, it follows that there exists $0 \leq k < n$ and some $e \in \Delta_k$ such that
\[
    \size{(e_1 \cap e_2) \setminus e} + k < r, \quad \text{so that} \quad \size{C_1 \cap C_2 \setminus e} < r - k.
\]
Consequently, $C_1 \cap C_2$ must contain a $(k+1)$-subset of $e$. Since $k + 1 \leq n$, this contradicts the minimality of $C_1$ and $C_2$ as sets in \eqref{min}.
\end{proof}

Using Claim~\ref{cla:vlowbound} and \eqref{condition}, we have that
    $v_{\Delta}(e_1 \cup e_2) \leq i + j - r$.
On the other hand, we have
    $\size{(C_1 \cup C_2) \setminus \{x\}} = i + j - r + 1$,
which implies
\[
    \size{e_1 \cup e_2} \geq \size{(C_1 \cup C_2) \setminus \{x\}} = i + j - r + 1.
\]
From this, we consider two cases:
\begin{itemize}
    \item \textbf{Case 1:} Assume $i + j - r \geq n$. In this case, $(C_1 \cup C_2) \setminus \{x\}$ contains an $(n+1)$-subset of $[d]$. Consequently, it must include an element of $\Ccal$.
    
    \item \textbf{Case 2:} Assume $i + j - r < n$. Here, there exists $0 \leq k < n$ and $e \in \Delta_k$ such that
        $\size{(e_1 \cup e_2) \setminus e} + k \leq i + j - r$.
    Consequently,
        $\size{((C_1 \cup C_2) \setminus \{x\}) \setminus e} \leq i + j - r - k$, 
    which implies that $(C_1 \cup C_2) \setminus \{x\}$ contains at least $k+1$ elements from $e$. Hence, it includes an element of $\Ccal$.
\end{itemize}
This completes the proof.
\end{proof}

By Lemma~\ref{unico}, Question~\ref{quest 21} becomes straightforward for labeled hypergraphs that satisfy the conditions of Lemma~\ref{def m}.

\begin{lemma}\label{unico}
Let $\Delta$ be a labeled hypergraph as in Lemma~\ref{def m}. Then, $M_{\Delta}$ is the unique maximal matroid degeneration in $\minabovehyp{\Delta}$.
\end{lemma}

\begin{proof}
Let $N \in \minabovehyp{\Delta}$. For each $0 \leq i \leq n-1$, we know that $\rank_{N}(e) \leq i$ for every $e \in \Delta_{i}$. From the description of the circuits of $M_{\Delta}$ in Lemma~\ref{def m}, we deduce that $\mathcal{D}(N) \supseteq \Ccal(M_{\Delta})$, which implies that $N \leq M_{\Delta}$.
Furthermore, since $M_{\Delta} \geqhyp \Delta$, we conclude that $M_{\Delta}$ is indeed the unique maximal matroid degeneration of $\Delta$ in $\minabovehyp{\Delta}$.
\end{proof}

Recall that a {\em stack} is an abstract data type that represents a collection of elements with two primary operations: {\em push}, which adds an element to the collection, and {\em pop}, which removes the most recently added element.
We now present Algorithm~\ref{algo 21}, which provides a solution to Question~\ref{quest 21}.

\begin{algorithm}[h]
\caption{Maximal matroid degenerations of a hypergraph}\label{algo 21}
\begin{algorithmic}[1]
\Require A labeled hypergraph $\Lambda$.\vs
\Ensure The set $Z = \minposet{N : N \geqhyp \Lambda}$.\vs
\State Initialize a stack $L$ and push the hypergraph $\Lambda$, and create an empty list $Y$.
\State \textbf{While} $L$ is not empty \textbf{do}
\begin{enumerate}[label=(\alph*)]
\item Pop the top hypergraph $\Delta$ from $L$.
\item Iterate through all distinct pairs of edges $e_{1},e_{2}\in \Delta$ until one of the following cases first occurs:

\item 
{\bf Case~1:} $e_{1}\in \Delta_{i},e_{2}\in \Delta_{j}$ with $e_{1}\subset e_{2}$ and $i>j$.
Then push onto $L$ the hypergraph induced by $\Delta\cup \{e_{1}\}$, where $e_{1}$ is assigned Type~$j$, see Definition~\ref{induced hypergraph}.\vs

\item 
{\bf Case~2:} $e_{1}\in \Delta_{i},e_{2}\in \Delta_{j}$ with 
$e_{1}\subset e_{2}$ and $j>i+\size{e_{2}\setminus e_{1}}$.
Then push onto $L$ the hypergraph induced by $\Delta\cup \{e_{2}\}$, where $e_{2}$ is assigned Type~$i+\size{e_{2}\setminus e_{1}}$.\vs

\item 
{\bf Case~3:} $e_{1}\in \Delta_{i},e_{2}\in \Delta_{j}$
with $v_{\Delta}(e_{1}\cap e_{2})+v_{\Delta}(e_{1}\cup e_{2})>i+j$.
Then, set  
\[
    s=v_{\Delta}(e_{1}\cap e_{2})+v_{\Delta}(e_{1}\cup e_{2})-i-j,
\]
and push onto the stack $L$ the following two labeled hypergraphs:
\begin{itemize}
    \item The hypergraph induced by $\Delta\cup \{e_{1}\cup e_{2}\}$, where $e_{1}\cup e_{2}$ is assigned Type~$v_{\Delta}(e_{1}\cup e_{2})-\lceil \textstyle\frac{s}{2} \rceil$;
    \item The hypergraph induced by $\Delta\cup \{e_{1}\cap e_{2}\}$, where $e_{1}\cap e_{2}$ is assigned Type~$v_{\Delta}(e_{1}\cap e_{2})-\lceil \textstyle\frac{s}{2} \rceil$.
\end{itemize}\vs
\item\label{step:genhyper:f} If we finish visiting all pairs of distinct edges $e_{1},e_{2}\in \Delta$ and none of these cases occurs, add the matroid $M_{\Delta}$ to $Y$.
\end{enumerate}

\State  Return the set $Z$ of maximal matroids among $Y$ using Algorithm~\ref{alg:algocompare5}.
\end{algorithmic}
\end{algorithm}
\vspace{-5mm}
\paragraph{Termination of Algorithm~\ref{algo 21}.}
Consider the partial order on hypergraphs on $[d]$ defined by $\Delta_{1}\geq \Delta_{2}$ if and only if for every $e\in (\Delta_{2})_{i}$, there exists $e\rq\in (\Delta_{1})_{j}$ with $j\leq i$ and $e\subset e\rq$. Under this ordering, the sequence of hypergraphs in the stack increases strictly at each step. Since the number of hypergraphs is finite, the process must eventually terminate. 
\vspace{-3mm}
\paragraph{Correctness of Algorithm~\ref{algo 21}.} 
We proceed by establishing two successive claims.

\begin{claim}\label{cla:further}
    Suppose that at a certain step, $\Delta$ is the top hypergraph of $L$ and let $N\geqhyp \Delta$. Then, either $N$ satisfies the conditions of Lemma~\ref{def m} or there exists a hypergraph $\widetilde{\Delta}$ that will be visited in the future, with $N\geqhyp \widetilde{\Delta}$. 
\end{claim}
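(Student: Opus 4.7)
The plan is a case analysis on what happens during the inner loop of step~(b) of the current iteration. If no pair of edges of $\Delta$ triggers Case~1, 2 or 3, then in particular Case~3 fails for every pair $e_1 \in \Delta_i$, $e_2 \in \Delta_j$, which is literally the statement $i + j \geq v_{\Delta}(e_1 \cap e_2) + v_{\Delta}(e_1 \cup e_2)$, i.e.\ condition~\eqref{condition}. Thus $\Delta$ meets the hypothesis of Lemma~\ref{def m} and the first alternative of the claim holds. Otherwise some pair triggers a case, and I would verify that $N$ still extends at least one of the hypergraphs pushed onto $L$; by termination of the algorithm each pushed hypergraph is eventually popped, so it qualifies as being visited in the future.

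For Cases~1 and 2 the verification is a one-line rank computation. In Case~1 the pushed hypergraph only relabels $e_1$ with the smaller Type~$j$, and since $e_1 \subset e_2$ one has $\rank_N(e_1) \leq \rank_N(e_2) \leq j$. In Case~2 it relabels $e_2$ with Type~$i + |e_2 \setminus e_1|$, and subadditivity of the rank gives $\rank_N(e_2) \leq \rank_N(e_1) + |e_2 \setminus e_1| \leq i + |e_2 \setminus e_1|$. In both situations $N \geqhyp \widetilde{\Delta}$ follows from $N \geqhyp \Delta$ together with this additional rank bound, since the hypergraph is subsequently induced (Definition~\ref{induced hypergraph}), an operation that only discards redundant edges.

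The main work lies in Case~3. Applying submodularity of $\rank_N$ (Proposition~\ref{prop:submodularity}) to $N$ yields
\[
   \rank_N(e_1 \cup e_2) + \rank_N(e_1 \cap e_2) \leq \rank_N(e_1) + \rank_N(e_2) \leq i + j.
\]
Denoting by $\Delta^{\cup}$ and $\Delta^{\cap}$ the two hypergraphs pushed in Case~3, I would argue by contradiction: assuming $N \not\geqhyp \Delta^{\cup}$ and $N \not\geqhyp \Delta^{\cap}$ would force both
\[
   \rank_N(e_1 \cup e_2) \geq v_{\Delta}(e_1 \cup e_2) - \lceil s/2 \rceil + 1 \quad\text{and}\quad \rank_N(e_1 \cap e_2) \geq v_{\Delta}(e_1 \cap e_2) - \lceil s/2 \rceil + 1.
\]
Summing, substituting $s = v_{\Delta}(e_1 \cap e_2) + v_{\Delta}(e_1 \cup e_2) - i - j$, and using the elementary bound $2\lceil s/2 \rceil \leq s + 1$, gives $\rank_N(e_1 \cup e_2) + \rank_N(e_1 \cap e_2) \geq i + j + 1$, which contradicts the submodularity bound above. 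Hence at least one of $\Delta^{\cup}, \Delta^{\cap}$ admits $N$ as an extension.

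The only real subtlety is the arithmetic of Case~3 with the ceiling function; everything hinges on the tight inequality $2\lceil s/2 \rceil \leq s+1$, which makes the parity of $s$ irrelevant and closes the argument. Once this is in hand, the remaining cases are immediate consequences of monotonicity and subadditivity of the rank, and the claim follows.
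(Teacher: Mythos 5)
Your proof is correct and follows essentially the same route as the paper's argument: the case analysis on which of Cases~1--3 is triggered, the rank bounds from monotonicity and subadditivity in Cases~1 and~2, and submodularity plus the ceiling bound $2\lceil s/2\rceil \leq s+1$ in Case~3. The only cosmetic difference is that you present the Case~3 dichotomy via a contradiction argument while the paper states it directly, but the underlying arithmetic is identical.
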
 

Let ``$\rank$" denote the rank function of $N$. 
Either $\Delta$ satisfies the conditions of Lemma~\ref{def m} or, by iterating over all distinct pairs of edges $e_{1},e_{2}\in \Delta$, one of the following cases occurs.

\begin{description}
\item[Case 1:] 
Since $N\geqhyp \Delta$, it follows that
$j \geq \rank(e_{2})\geq \rank(e_{1})$. Therefore, $N\geqhyp \Delta \cup \{e_{1}\}$.

\item[Case 2:] 
Since $N\geqhyp \Delta$, we have $\rank(e_{1})\leq i$, which implies 
\[
    \rank(e_{2})\; \leq\; 
    \rank(e_{1})+\size{e_{2}\setminus e_{1}}\;\leq\;
    i+\size{e_{2}\setminus e_{1}}.
\] 
Therefore, $N\geqhyp \Delta \cup \{e_{2}\}$.

\item[Case 3:] 
Since $N\geqhyp \Delta_{1}$, it follows that $\rank(e_{1})\leq i$ and $\rank(e_{2})\leq j$. By the submodularity of the rank function, this implies 
\[\rank(e_{1}\cup e_{2})+\rank(e_{1}\cap e_{2})\leq \rank(e_{1})+\rank(e_{2})\leq i+j=v_{\Delta}(e_{1}\cup e_{2})+v_{\Delta}(e_{1}\cap e_{2})-s.\]
Therefore, either 
\[\rank(e_{1}\cup e_{2})\leq v_{\Delta}(e_{1}\cup e_{2})-\lceil \textstyle\frac{s}{2} \rceil, \quad \text{or} \quad \rank(e_{1}\cap e_{2})\leq v_{\Delta}(e_{1}\cap e_{2})-\lceil \textstyle\frac{s}{2} \rceil.\]
This implies that either $N\geqhyp \Delta \cup \{e_{1}\cup e_{2}\}$ or $N\geqhyp \Delta \cup \{e_{1}\cap e_{2}\}$.
\end{description}
This establishes the Claim~\ref{cla:further}. To conclude the correctness, 
we prove the following claim.

\begin{claim}~\label{cla:final}
Let $N\geqhyp \Lambda$. Then, there exists $N\rq\in Z$ such that $N\leq N\rq$.
\end{claim}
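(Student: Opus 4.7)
The plan is to track $N$ through the execution of Algorithm~\ref{algo 21}, maintaining the invariant that at every stage there is some hypergraph on the stack $L$ that is extended by $N$ in the sense of $\geqhyp$. Initially, $L = \{\Lambda\}$ and $N \geqhyp \Lambda$ by hypothesis. For the inductive step, I would apply Claim~\ref{cla:further} to the tracked hypergraph $\Delta$ once it is popped: either $\Delta$ satisfies the hypotheses of Lemma~\ref{def m}, so that processing $\Delta$ appends $M_\Delta$ to $Y$, or at least one of the hypergraphs pushed in Cases~1--3 is still extended by $N$, and I follow that branch as the new tracked hypergraph. Termination of Algorithm~\ref{algo 21}, already established via the partial order on hypergraphs, then guarantees that the first alternative must arise after finitely many iterations.

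Once we reach a tracked hypergraph $\Delta$ satisfying the hypotheses of Lemma~\ref{def m}, Lemma~\ref{unico} yields that $M_\Delta$ is the unique minimal element of $\minabovehyp{\Delta}$, so $N \geq M_\Delta$. The algorithm appends $M_\Delta$ to $Y$. Since $Z$ is by construction the set of minimal matroids of $Y$ (with respect to the dependency order, as tested using Algorithm~\ref{alg:algocompare5}), there exists $N' \in Z$ with $M_\Delta \geq N'$. By transitivity, $N \geq N'$, which proves the claim.

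The main delicate point is the bookkeeping in Case~3, where two hypergraphs $\widetilde{\Delta}_\cap$ and $\widetilde{\Delta}_\cup$ are simultaneously pushed onto $L$: Claim~\ref{cla:further} only guarantees that $N$ extends at least \emph{one} of them, so my induction relies on making a choice between the two branches at each such step. A secondary subtlety is that between the moment we choose to track a hypergraph $\widetilde{\Delta}$ and the moment it is actually popped from the LIFO stack, many other hypergraphs may be pushed and processed; the argument works because I only need to know that \emph{eventually} $\widetilde{\Delta}$ reaches the top of $L$, which is ensured by termination. Beyond this bookkeeping, the proof is a direct synthesis of Claim~\ref{cla:further}, Lemma~\ref{unico}, and the termination argument.
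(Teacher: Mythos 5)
Your proof is correct and follows essentially the same route as the paper's, which in a single sentence appeals to Claim~\ref{cla:further} and the termination criterion of step~\ref{step:genhyper:f} to conclude $N\geqhyp\Delta$ for some leaf $\Delta$ and then invokes Lemma~\ref{unico}. You have merely unpacked the inductive tracking argument that the paper compresses into ``by Claim~\ref{cla:further}, we have $N\geqhyp\Delta$'', and the bookkeeping points you flag (the branch choice in Case~3, the fact that a tracked hypergraph eventually resurfaces from the LIFO stack because the algorithm terminates) are legitimate but do not change the substance of the argument.
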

According to the termination criterion of step~\ref{step:genhyper:f}, we eventually visit a hypergraph $\Delta$ satisfying the conditions of Lemma~\ref{def m}. In addition, by Claim~\ref{cla:further}, we have $N\geqhyp \Delta$ and then $N\leq M_{\Delta}$, by Lemma~\ref{unico}. By construction, there exists $N'\in Z$ such that $M_{\Delta} \leq N'$ and the claim follows by transitivity.
\qed

\begin{example}
Consider the labeled hypergraph $\Delta$ from Example~\ref{example: hypergraph delta}. Applying Algorithm~\ref{algo 21} we find that $\minabovehyp{\Delta}$ consists of the four matroids depicted in Figure~\ref{figure 41}.
\end{example}

\begin{figure}[H]
    \centering
    \includegraphics[width=0.8\textwidth, trim=0 0 0 0, clip]{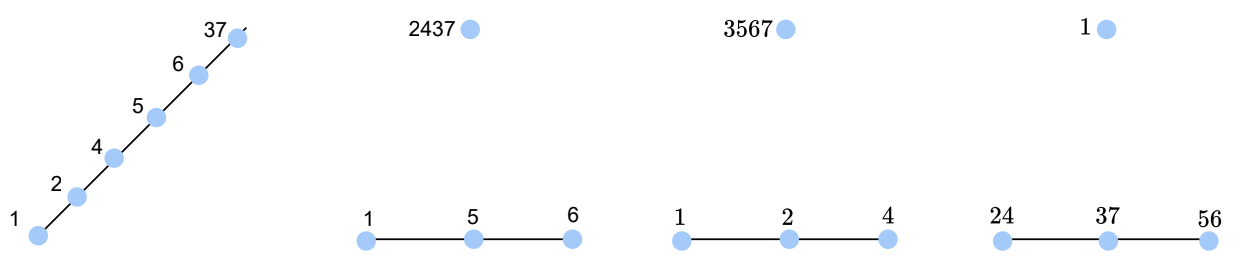}
    \caption{Matroids in $\minabovehyp{\Delta}$ for the hypergraph  $\Delta$ from Example~\ref{example: hypergraph delta}.}
    \label{figure 41}
\end{figure}

\subsection{Algorithm for identifying $\minabove{M}$}
\label{subsec:min_>}
Putting things together, we can now present an algorithm to answer Question~\ref{quest}. Its correctness is a direct consequence of Lemma~\ref{lema}.

\begin{algorithm}
\caption{Maximal matroid degenerations algorithm}\label{211}
\begin{algorithmic}[1]
\Require A matroid $M$;
\Ensure A set $Y = \minabove{M}$.\vs

\State Initialize $L_M$ as an empty set.
\For{$e\not \in \mathcal{D}(M)$}
\State define the labeled hypergraph $\Delta_{e}=\Delta_{M}\cup \{e\}$, where $e$ is assigned Type~$(\size{e}-1)$;

\State\label{step:genrank:update} 
update $L_M = L_M  \cup \minabovehyp{\Delta_{e}}$, using Algorithm~\ref{algo 21}.
\EndFor\vs
\State\label{step:genrank:compare} Using Algorithm~\ref{alg:algocompare5}, compare the matroids in the set $L_M$ to identify the maximal ones. Return the resulting set $Y$.\vs

\end{algorithmic}
\end{algorithm}

\begin{example}
Consider the Fano plane $M_{\textup{Fano}}$ from Example~\ref{example Fano}. Applying Algorithm~\ref{211}, we obtain that $\minabove{M_{\textup{Fano}}}$ consists of the $22$ matroids $U_{2,7}$, $M_{\textup{Fano}}(i)$, $A_{j}'$, and $B_{k}'$ defined in Example~\ref{example Fano}, for $i,j,k\in [7]$; see Figure~\ref{figure 4}.
\end{example}

\begin{remark}\label{limitation}
While Algorithm~\ref{211} is theoretically sound, it may fail to terminate in
practice. The main obstacle is the large amount of redundancy generated during
the search: in Step~\ref{step:genrank:update} many unnecessary candidate
matroids are produced, which forces Step~\ref{step:genrank:compare}, already
the computational bottleneck, to perform far more comparisons than needed.
A substantial part of this redundancy arises from the fact that different
sequences of added dependencies may lead to \emph{isomorphic} degenerations.
Detecting and removing such duplicates requires solving a matroid isomorphism
problem, a task at least as hard as graph isomorphism. This structural
difficulty makes a general termination criterion out of reach.

In the next section, we show how stratifying the search space drastically reduces
this redundancy. Even with these improvements, the complexity grows quickly with
the rank~$n$, which motivates our restriction to the case $n=4$. The following
section provides quantitative evidence supporting this choice.
\end{remark}

\begin{remark}\label{rem:limits}
As $n$ increases, the number of cases examined in Algorithm~\ref{algo 21} grows as $O(n^2)$, leading to an output space of size $O(2^{n^2})$ in the worst case. Moreover, since the algorithm is invoked $\textstyle O\left(\sum_{k=1}^n\binom{d}{k}\right)$ times, the final stages involve searching for maximal elements in a very large poset. If $N$ denotes the size of this poset, the concluding step requires $O(N^2)$ pairwise comparisons using Algorithm~\ref{alg:algocompare5}, each involving roughly $n^2$ tests.
\end{remark}

\section{Optimized algorithm for rank four}\label{algorithm}

To overcome the limitations of Algorithm~\ref{211} outlined in Remark~\ref{limitation}, we now present an optimized variant specifically designed for the rank-four case, along with its implementation.
Throughout, we fix a simple matroid $M$ of rank four on the ground set $[d]$, and denote its set of dependencies by $\mathcal{D}(M)$.
All labeled hypergraphs considered are defined with $n = 4$, and all matroids discussed are assumed to have rank at most four.

\subsection{Decomposition of the problem by stratification}\label{abcd}

The general problem of computing the maximal elements in the set $\{N : N < M\}$ quickly becomes intractable using the approach outlined in Subsection~\ref{subsec:min_>}. To address this, we propose partitioning this set into a stratification, which significantly reduces both the \emph{number of candidate matroids} and the \emph{number of matroid comparisons} required.

More precisely, we define the following subsets of $\{N : N < M\}$ for $i \geq 1$:
\[
    \ABCD{i} = \left\{ N < M : \forall\, 1 \leq j < i,\; \Ccal_j(N)=\Ccal_j(M) \text{\; and\; } \Ccal_i(N) \supsetneq \Ccal_i(M) \right\},
\]
where $\Ccal_i(N)$ denotes the set of circuits (i.e., minimal dependencies) of size $i$ in $N$ (see Definition~\ref{def:matroidcircuit}).
In our setting, where $M$ is a simple matroid of rank 4, these sets are:
\begin{equation*}
\begin{aligned}
&\ABCD{1}=\{N<M: \text{$\Ccal_{1}(N)\neq \emptyset$}\},\\
&\ABCD{2}=\{N<M: \text{$\Ccal_{1}(N)=\emptyset$ and $\Ccal_{2}(N)\neq  \emptyset$}\},\\
&\ABCD{3}=\{N<M: \text{$\Ccal_{1}(N)=\Ccal_{2}(N)= \emptyset$ and $\Ccal_{3}(N)\supsetneq \Ccal_{3}(M)$}\},\\
&\ABCD{4}=\{N<M:\text{$\Ccal_{1}(N)=\Ccal_{2}(N)= \emptyset,\  \Ccal_{3}(N)= \Ccal_{3}(M)$, and $\Ccal_{4}(N)\supsetneq \Ccal_{4}(M)$}\}.
\end{aligned}
\end{equation*}
Note that the decomposition
$\{N:N<M\}=\amalg_{i\geq 1}\, \ABCD{i}$ 
leads to the following equality: 
\begin{equation}\label{unio}
\minabove{M}=\minposet{\amalg_{i\geq 1}\, \minposet{\ABCD{i}}}.
\end{equation}
Therefore, the global problem is decomposed into smaller problems by stratifying $\{N : N < M\}$. In the following subsections, we focus on designing algorithms to solve each of these problems, by efficiently computing each $\minposet{\ABCD{i}}$. Afterward, we compute $\minabove{M}$ through a careful application of Algorithm~\ref{alg:algocompare5}, avoiding unnecessary comparisons.

\subsection{Lemmas from submodularity}
To compute the different strata introduced above, a direct application of Lemma~\ref{def m} is insufficient. Instead, we need to refine the criterion to leverage both the stratification framework and the assumption that the rank is 4. This refinement will allow us to effectively exploit the information specific to the subset $\ABCD{i}$ in question.

Throughout this subsection, we assume that all hypergraphs under consideration are reduced, consisting solely of edges of Type~$2$ and Type~$3$ (see Subsection~\ref{ssec:reduchyper}). We begin by reformulating the submodularity of the rank function in our setting, which is the inequality
\[
\rank(e_1 \cup e_2) + \rank(e_1 \cap e_2) \leq \rank(e_1) + \rank(e_2).
\]

\begin{lemma}\label{submo}
Let $\Delta$ be a labeled hypergraph, $e_{1},e_{2}\in \Delta$, and $N$ a loopless matroid satisfying $N\geqhyp \Delta$. The following properties hold for the rank function of $N$, denoted by $\rank$:
\begin{enumerate}[label=(\roman*)]
\item[{\rm (i)}] if $e_{1},e_{2}\in \Delta_{3}$, then either $\rank(e_{1}\cap e_{2})\leq 2$ or $\rank(e_{1}\cup e_{2})\leq 3$;
\item[{\rm (ii)}] if $e_{1}\in \Delta_{3}$ and $e_{2}\in \Delta_{2}$ then either $\rank(e_{1}\cap e_{2})\leq 1$ or $\rank(e_{1}\cup e_{2})\leq 3$;
\item[{\rm (iii)}] if $e_{1},e_{2}\in \Delta_{2}$, then  either $\rank(e_{1}\cap e_{2})\leq 1$ or $\rank(e_{1}\cup e_{2})\leq 2$;
\item[{\rm (iv)}] if $e_{1},e_{2}\in \Delta_{2}$ and $e_{1}\cap e_{2}\neq \emptyset$, then $\rank(e_{1}\cup e_{2})\leq 3$.
\end{enumerate}
\end{lemma}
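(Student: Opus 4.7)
The plan is to derive each of the four statements as a direct application of the submodular inequality
\[
    \rank(e_1 \cup e_2) + \rank(e_1 \cap e_2) \;\leq\; \rank(e_1) + \rank(e_2)
\]
from Proposition~\ref{prop:submodularity}, combined with the rank bounds on $e_1$ and $e_2$ given by the hypothesis $N \geqhyp \Delta$ (see Definition~\ref{def:orderhyper}.(ii)). For case~(iv), I will additionally use the hypothesis that $N$ is loopless.

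First I would handle the three dichotomy statements (i), (ii), and (iii) uniformly. In each case, summing the rank bounds $\rank(e_1) \leq i$ and $\rank(e_2) \leq j$ (with $(i,j) = (3,3), (3,2), (2,2)$ respectively) yields via submodularity that $\rank(e_1 \cup e_2) + \rank(e_1 \cap e_2)$ is at most $6$, $5$, and $4$. The negation of the first alternative in each item forces $\rank(e_1 \cap e_2)$ to exceed the threshold ($2$, $1$, or $1$), and substituting back immediately gives the second alternative ($\rank(e_1 \cup e_2) \leq 3$, $3$, or $2$). Each case therefore reduces to a one-line arithmetic check.

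For statement (iv), the key additional input is that $N$ is loopless, so any nonempty subset of the ground set has rank at least one; in particular $\rank(e_1 \cap e_2) \geq 1$ since $e_1 \cap e_2 \neq \emptyset$. Combining this with $\rank(e_1), \rank(e_2) \leq 2$ and submodularity directly yields $\rank(e_1 \cup e_2) \leq 2 + 2 - 1 = 3$, as required.

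There is no substantial obstacle here: each item follows from at most two applications of the rank hypothesis together with submodularity (and, for (iv), the loopless assumption). The only subtlety worth emphasizing in the write-up is that the dichotomies in (i)--(iii) are formulated as ``either/or'' precisely because the submodular sum can be split in two ways, and negating one alternative bounds the other.
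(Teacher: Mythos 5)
Your proof is correct and is the intended argument: the paper introduces Lemma~\ref{submo} explicitly as a reformulation of the submodularity inequality in this rank-four setting, and each of (i)--(iii) is exactly the contrapositive arithmetic you describe, while (iv) uses $\rank(e_1\cap e_2)\geq 1$ from looplessness just as you do. No substantive differences.
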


Building on the previous lemma, we now characterize the conditions under which a labeled hypergraph defines a matroid, refining Lemma~\ref{def m} in this context.

\begin{lemma}\label{condi matr}
Let $\Delta$ be a labeled hypergraph on $[d]$ and suppose that for any distinct pair of edges $e_{1},e_{2}\in \Delta$ the following conditions hold:
\begin{itemize}
\item[{\rm (i)}] If $e_{1},e_{2}\in \Delta_{3}$ and $\size{e_{1}\cap e_{2}}\geq 3$, then $e_{1}\cap e_{2}\in \Delta_{2}$.
\item[{\rm (ii)}] If $e_{1}\in \Delta_{3}$ and $e_{2}\in \Delta_{2}$ with $\size{e_{1}\cap e_{2}}\geq 2$, then $e_{2}\subset e_{1}$.
\item[{\rm (iii)}] If $e_{1},e_{2}\in \Delta_{2}$, then $\size{e_{1}\cap e_{2}}\leq 1$.
\item[{\rm (iv)}] If $e_{1},e_{2}\in \Delta_{2}$ and $\size{e_{1}\cap e_{2}}=1$, then $e_{1}\cup e_{2}\subset x$ for some $x\in \Delta_{3}$.
\end{itemize}
Then, the following collection of sets forms the circuits of a matroid $M_{\Delta}$ on $[d]$:
\begin{equation}\label{mini}\Ccal=\min (\cup_{e\in \Delta_{2}}\textstyle{\binom{e}{3}}\cup_{e\in \Delta_{3}}\textstyle \binom{e}{4}\cup \binom{[d]}{5}),\end{equation} 
where 
$\min$ denotes the inclusion-minimal subsets.
\end{lemma}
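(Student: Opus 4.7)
The strategy is to deduce this lemma as a direct consequence of Lemma~\ref{def m}, by verifying that condition~\eqref{condition}, namely
\[
    i+j \;\geq\; v_{\Delta}(e_{1}\cap e_{2}) + v_{\Delta}(e_{1}\cup e_{2}),
\]
holds for every pair of edges $e_{1}\in \Delta_i$, $e_{2}\in \Delta_j$. Once this is established, Lemma~\ref{def m} immediately implies that the collection
\(
\min\bigl(\bigcup_{e\in\Delta_2}\binom{e}{3}\cup \bigcup_{e\in\Delta_3}\binom{e}{4}\cup \binom{[d]}{5}\bigr)
\)
— which coincides with \eqref{mini} since $\Delta$ is reduced and only contains Type~2 and Type~3 edges — forms the circuits of a matroid $M_{\Delta}$.

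The plan is to split into the three possible type combinations and use hypotheses (i)-(iv) together with the trivial bound $v_{\Delta}\leq n=4$. First, when both $e_1,e_2\in\Delta_3$, we have $i+j=6$, and since $v_{\Delta}(e_1\cup e_2)\leq 4$ it suffices to check $v_{\Delta}(e_1\cap e_2)\leq 2$. If $|e_1\cap e_2|\leq 2$ this is immediate from the $|A|$ term in the definition of $v_{\Delta}$, and otherwise hypothesis (i) gives $e_1\cap e_2\in \Delta_2$ so that $v_{\Delta}(e_1\cap e_2)\leq 2$. Second, if $e_1\in\Delta_3$ and $e_2\in\Delta_2$, then $i+j=5$. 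When $|e_1\cap e_2|\leq 1$, we get $v_{\Delta}(e_1\cap e_2)\leq 1$ and $v_{\Delta}(e_1\cup e_2)\leq 4$; when $|e_1\cap e_2|\geq 2$, hypothesis (ii) forces $e_2\subset e_1$, giving $e_1\cup e_2=e_1\in\Delta_3$ hence $v_{\Delta}(e_1\cup e_2)\leq 3$, while $v_{\Delta}(e_1\cap e_2)\leq v_{\Delta}(e_2)\leq 2$.

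The main technical step is the case where $e_1,e_2\in\Delta_2$, which is where hypotheses (iii) and (iv) come into play. Here $i+j=4$ and (iii) yields $|e_1\cap e_2|\leq 1$, hence $v_{\Delta}(e_1\cap e_2)\leq 1$. If $e_1\cap e_2=\emptyset$, then $v_{\Delta}(e_1\cup e_2)\leq 4$ and the bound is automatic. Otherwise $|e_1\cap e_2|=1$ and (iv) provides some $x\in\Delta_3$ with $e_1\cup e_2\subset x$; applying the $|A\setminus e|+i$ term in the definition of $v_{\Delta}$ with $e=x$ and $i=3$ gives $v_{\Delta}(e_1\cup e_2)\leq |(e_1\cup e_2)\setminus x|+3=3$, yielding the required bound $1+3=4$.

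Having checked condition~\eqref{condition} in all three cases, Lemma~\ref{def m} applies and produces the matroid $M_{\Delta}$ whose circuits are exactly the collection $\Ccal$ in \eqref{mini}. The only delicate point is the case of two Type~2 edges sharing a vertex, where hypothesis (iv) is essential: without it, two such edges could generate a configuration of rank exceeding~$3$ on their union, which would break the submodularity inequality and prevent $\Ccal$ from satisfying the circuit elimination axiom.
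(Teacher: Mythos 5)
Your proof is correct, and it takes a genuinely different route from the paper's. The paper verifies the circuit elimination axiom directly for $\Ccal$, splitting into four cases according to the sizes of the two circuits $C_1,C_2$ (both of size $5$; both of size $4$; sizes $3$ and $4$; both of size $3$) and invoking hypotheses (i)--(iv) as needed in each case. You instead observe that $M_\Delta$ is exactly the matroid constructed in Lemma~\ref{def m} (since $\Delta$ is reduced with $\Delta_0 = \Delta_1 = \emptyset$ and $n=4$), and you reduce the whole statement to checking the submodularity-style inequality~\eqref{condition} for all pairs of edges, using the type-by-type case split. Both arguments are sound; your checks in the three type combinations are correct, and the one pair you do not mention ($e_1 = e_2$) is automatic since $v_\Delta(e) \leq i$ for $e \in \Delta_i$. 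Your approach is more economical because it reuses the already-proven Lemma~\ref{def m} rather than repeating the circuit-axiom verification, and it makes explicit the logical relationship between the two lemmas that the paper itself only gestures at immediately afterward (``Since $M_{\Delta}$ is defined identically as in Lemma~\ref{def m}, \ldots'') when deriving Lemma~\ref{un min}. The paper's direct proof, on the other hand, keeps the rank-four section self-contained. One small caveat on your closing remark: the danger in omitting hypothesis (iv) is not that some matroid would have $\rank(e_1\cup e_2) > 3$ (submodularity would forbid that in any matroid $N \geqhyp \Delta$); it is rather that the \emph{candidate} collection $\Ccal$ would then fail the circuit elimination axiom, so no matroid with circuit set $\Ccal$ would exist at all. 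Equivalently, in your framework, condition~\eqref{condition} would fail and Lemma~\ref{def m} would not apply.
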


\begin{proof}~
We must verify that $\Ccal$ satisfies the circuit elimination axiom. Specifically, for any distinct $C_{1},C_{2}\in \Ccal$ and $y\in C_{1}\cap C_{2}$, we need to show the existence of $C_{3}\in \Ccal$ with $C_{3}\subset (C_{1}\cup C_{2})\setminus \{y\}$. We consider the following cases:

\medskip
{\bf (1)} Suppose $\size{C_{1}}=\size{C_{2}}=5$. Then, we have $\size{(C_{1}\cup C_{2})\setminus \{y\}}\geq 5$, which implies that $(C_{1}\cup C_{2})\setminus \{y\}$ contains an element of $\Ccal$, as the latter contains $\textstyle \binom{[d]}{5}$.

\smallskip
{\bf (2)} Suppose $\size{C_{1}}=\size{C_{2}}=4$. If $\size{C_{1}\cap C_{2}}\leq 2$, then $\size{(C_{1}\cup C_{2})\setminus \{y\}}\geq 5$, and we conclude as in \textbf{(1)}.
If $\size{C_{1}\cap C_{2}}=3$, condition~(i) implies that $C_{1}\cap C_{2}$ is contained within an edge of $\Delta_{2}$, which contradicts the minimality of $C_{1}$ and $C_{2}$ in~\eqref{mini}.

\smallskip
{\bf (3)} Suppose $\size{C_{1}}=3$ and $\size{C_{2}}=4$. If $\size{C_{1}\cap C_{2}}=1$, then $\size{(C_{1}\cup C_{2})\setminus \{y\}}\geq 5$, and we conclude as in \textbf{(1)}.
Now suppose that $\size{C_{1}\cap C_{2}}= 2$. We know there exist $e_{1}\in \Delta_{3}$ and $e_{2}\in \Delta_{2}$ with $C_{1}\subset e_{2}$ and $C_{2}\subset e_{1}$. By condition~(ii), $e_{2}\subset e_{1}$, which implies $C_{1}\cup C_{2}\subset e_{1}$. Thus, any $4$-subset of $C_{1}\cup C_{2}$ contains an element of $\Ccal$. Since $\size{(C_{1}\cup C_{2})\setminus \{y\}}=4$, the claim follows.

\smallskip
{\bf (4)} Suppose $\size{C_{1}}=\size{C_{2}}=3$. By conditions~(iii) and~(iv), it follows that $C_{1}\cap C_{2}=\{y\}$ and $C_{1}\cup C_{2}\subset x$ for some $x\in \Delta_{3}$. In particular, the 4-subsets in $(C_{1}\cup C_{2})\setminus \{y\}$ are contained in an edge of $\Delta_{3}$, so the claim follows in this case.

\smallskip
This completes the proof.
\end{proof}

Since $M_{\Delta}$ is defined identically as in Lemma~\ref{def m}, the following characterization is a special case of Lemma~\ref{unico}.

\begin{lemma}\label{un min}
Let $\Delta$ be a labeled hypergraph as in Lemma~\ref{condi matr}. Then, $M_{\Delta}$ is the unique maximal matroid degeneration in $\minabovehyp{\Delta}$.
\end{lemma}

\subsection{Computing maximal matroid degenerations of a hypergraph}\label{min above}
Analogous to the general case, we present Algorithm~\ref{alg:computeleaf} to answer Question~\ref{quest 21}. The termination of the algorithm can be established similarly to that of Algorithm~\ref{algo 21}.

\begin{algorithm}[h] 
\caption{Maximal matroid degenerations of a hypergraph}\label{alg:computeleaf}
\begin{algorithmic}[1]
\Require A labeled hypergraph $\Lambda$ and $2 \leq v\leq 4$.
\Ensure The set $Z = \minposet{N \in \ABCD{v} : N \geqhyp \Lambda}$.\vs
\State Initialize a stack $L$ and push the hypergraph $\Lambda$, and create an empty list $Y$.\vs
\State \textbf{While} $L$ is not empty \textbf{do}
\begin{enumerate}[label=(\alph*)]
\item\label{step:rank4hyper:a} Pop the top hypergraph $\Delta$ from $L$.
\item Iterate through all distinct pairs of edges $e_{1},e_{2}\in \Delta$ until one of the following cases first occurs:\vs
\item \label{step:rank4hyper:c}
{\bf Case~1:} $e_{1},e_{2}\in \Delta_{3}$ with $\size{e_{1}\cap e_{2}}\geq 3$ and $e_{1}\cap e_{2}$ not contained within any edge of $\Delta_{2}$. 
Then push onto $L$ the hypergraphs:
\begin{itemize}
    \item The hypergraph induced by $\Delta\cup \{e_{1}\cup e_{2}\}$, where $e_{1}\cup e_{2}$ is assigned Type~3, see Definition~\ref{induced hypergraph};
    \item The hypergraph induced by $\Delta \cup \{e_{1}\cap e_{2}\}$, where $e_{1}\cap e_{2}$ is assigned Type~2, if $v\leq 3$.
\end{itemize}\vs

\item \label{step:rank4hyper:d}
{\bf Case~2:} $e_{1}\in \Delta_{3},e_{2}\in \Delta_{2}$ with $\size{e_{1}\cap e_{2}}\geq 2$ and $e_{2}\not \subset e_{1}$. 
Then push onto $L$ the hypergraphs:
\begin{itemize}
    \item The hypergraph induced by $\Delta\cup \{e_{1}\cup e_{2}\}$, where $e_{1}\cup e_{2}$ is assigned Type~3;
    
    \item $\widetilde{\Delta}_{\text{red}}$, where $\widetilde{\Delta}$ is the hypergraph induced by $\Delta\cup \{e_{1}\cap e_{2}\}$ and $e_{1}\cap e_{2}$ is assigned Type~1 in $\widetilde{\Delta}$, if $v=2$.
\end{itemize}\vs

\item \label{step:rank4hyper:e}
{\bf Case~3:} $e_{1},e_{2}\in \Delta_{2}$ with $\size{e_{1}\cap e_{2}}\geq 2$. Then push onto $L$ the  hypergraphs:
\begin{itemize}
    \item The hypergraph induced by $\Delta \cup \{e_{1}\cup e_{2}\}$, where $e_{1}\cup e_{2}$ is assigned Type~2;
    \item $\widetilde{\Delta}_{\text{red}}$, where $\widetilde{\Delta}$ is the hypergraph induced by $\Delta\cup \{e_{1}\cap e_{2}\}$ and $e_{1}\cap e_{2}$ is assigned Type~1 in $\widetilde{\Delta}$, if $v=2$
\end{itemize}\vs

\item \label{step:rank4hyper:f}
{\bf Case~4:} $e_{1},e_{2}\in \Delta_{2}$ with $\size{e_{1}\cap e_{2}}=1$ and $e_{1}\cup e_{2}$  not contained within any edge of $\Delta_{3}$.
Then push onto $L$ the hypergraph induced by $\Delta\cup \{e_{1}\cup e_{2}\}$, where $e_{1}\cup e_{2}$ is assigned Type~3.\vs

\item\label{step:rank4hyper:g} If we finish visiting all pairs of distinct edges $e_{1},e_{2}\in \Delta$ and none of these cases occurs, add the matroid $M_{\Delta}$ to 
$Y$. 
\end{enumerate}
\State Return the set $Z$ of maximal matroids among $Y$ using Algorithm~\ref{alg:algocompare5}.
\end{algorithmic}
\end{algorithm}

\paragraph{Correctness of Algorithm~\ref{alg:computeleaf}.}
We proceed by establishing the following two claims.
\begin{claim}\label{cla:future}
    Suppose that at step~\ref{step:rank4hyper:a}, $\Delta$ is the top hypergraph of $L$, and let $N\in \ABCD{v}$ be a matroid satisfying $N\geqhyp \Delta$. Then, either $\Delta$ satisfies the conditions of Lemma~\ref{condi matr} or there exists a hypergraph $\widetilde{\Delta}$ that will be visited in the future, with $N\geqhyp \widetilde{\Delta}$.
\end{claim}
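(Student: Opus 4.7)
The plan is to argue by a case analysis that mirrors the structure of the algorithm. First, if none of Cases~1--4 of Algorithm~\ref{alg:computeleaf} is triggered when iterating over the pairs of distinct edges of $\Delta$, then by direct inspection the negations of these four cases yield exactly the four conditions (i)--(iv) of Lemma~\ref{condi matr}. Hence $\Delta$ satisfies that lemma's hypotheses, and the first alternative of the claim holds.

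Otherwise, some Case is triggered by a pair $(e_1,e_2)$ of edges. In each case, I would invoke the corresponding part of Lemma~\ref{submo} to get a submodular dichotomy: either $\rank_N(e_1\cup e_2)$ is small, or $\rank_N(e_1\cap e_2)$ is small (Case~4 has no intersection alternative). The union alternative always yields $N\geqhyp \Delta\cup\{e_1\cup e_2\}$ with exactly the type prescribed by the algorithm, so the corresponding hypergraph pushed onto the stack plays the role of $\widetilde{\Delta}$. The intersection alternative, in the subcases where the algorithm does push it (Case~1 with $v\leq 3$, and Cases~2--3 with $v=2$), similarly gives $N\geqhyp \Delta\cup\{e_1\cap e_2\}$ with the appropriate type, invoking Lemma~\ref{lem:remove2points} to pass to the reduced hypergraph when needed (noting that $N$ is loopless whenever $v\geq 2$, so the reduction bijection applies).

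The main obstacle is to rule out the intersection alternative in precisely those subcases where the algorithm does \emph{not} push it. For Cases~2 and~3 with $v\geq 3$, the argument is direct: $\rank_N(e_1\cap e_2)\leq 1$ on a set of cardinality at least $2$ would produce a loop or a double point in $N$, contradicting $\Ccal_1(N)=\Ccal_2(N)=\emptyset$. The subtler situation is Case~1 with $v=4$: there, $\rank_N(e_1\cap e_2)\leq 2$ on a set of size at least $3$, combined with $N$ being simple, forces every $3$-subset of $e_1\cap e_2$ to be a circuit of $N$. Since $\Ccal_3(N)=\Ccal_3(M)$ by the definition of $\ABCD{4}$, these are also circuits of $M$, so $e_1\cap e_2$ has rank at most $2$ in $M$ and is a union of $3$-circuits of $M$; its closure is therefore a rank-$2$ cyclic flat of $M$, namely an edge of $(\Delta_M)_2\subset \Delta_2$ containing $e_1\cap e_2$. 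This contradicts the defining hypothesis of Case~1 that $e_1\cap e_2$ is not contained in any edge of $\Delta_2$, completing the case analysis.
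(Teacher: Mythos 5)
Your proof is correct and follows essentially the same route as the paper's: the same case analysis parallel to Cases~1--4 of Algorithm~\ref{alg:computeleaf}, the same invocation of Lemma~\ref{submo} to obtain the union/intersection dichotomy, and the same observation that the intersection branch is unavailable exactly when the stratum $\ABCD{v}$ forbids it (a loop/double point would arise for $v\geq 3$ in Cases~2--3, and a rank-$2$ intersection would force $e_1\cap e_2$ into an edge of $(\Delta_M)_2\subset\Delta_2$ for $v=4$ in Case~1, since $\Ccal_3(N)=\Ccal_3(M)$). The only cosmetic difference is that you run the $v=4$ argument by contradiction while the paper argues directly that $\rank_M(e_1\cap e_2)\geq 3$ and hence $\rank_N(e_1\cap e_2)\geq 3$; the mathematical content is identical.
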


Either $\Delta$ satisfies the conditions of Lemma~\ref{condi matr} or, by iterating over all distinct pairs of edges $e_{1},e_{2}\in \Delta$, one of the following cases occurs.

\medskip{\bf Case 1:} By Lemma~\ref{submo}(i), either
    $\rank_{N}(e_{1}\cap e_{2}) \leq 2$ or $\rank_{N}(e_{1}\cup e_{2}) \leq 3$.
Thus, if $v \leq 3$, at least one of the hypergraphs in step~\ref{step:rank4hyper:c} satisfies the claim.  
If $v = 4$, we show that the second inequality holds, so the first hypergraph in step~\ref{step:rank4hyper:c} satisfies the claim. Indeed, in this case, $e_{1} \cap e_{2}$ is not contained in any edge of $\Delta_{2}$, and therefore, not in any edge of $(\Delta_{M})_{2}$. Since $\size{e_{1} \cap e_{2}} \geq 3$, it follows that $\rank_{M}(e_{1} \cap e_{2}) = 3$. But since $N \in \ABCD{4}$, we have $\Ccal_{3}(N) = \Ccal_{3}(M)$, hence $\rank_{N}(e_{1} \cap e_{2}) = 3$. Therefore, the first inequality cannot occur.

\medskip{\bf Case  2:} By Lemma~\ref{submo}(ii), either
    $\rank_{N}(e_{1} \cap e_{2}) \leq 1$ or $\rank_{N}(e_{1} \cup e_{2}) \leq 3$.
If $v = 2$, at least one of the hypergraphs in step~\ref{step:rank4hyper:d} satisfies the claim.  
If $v \geq 3$, the first inequality cannot occur, as $N$ would exhibit a double point, given that $\size{e_{1} \cap e_{2}} \geq 2$. This is impossible since $N \in \ABCD{v}$ with $v \geq 3$. Hence, the first hypergraph in step~\ref{step:rank4hyper:d} satisfies the claim.

\medskip{\bf Case 3:} By Lemma~\ref{submo}(iii), either
    $\rank_{N}(e_{1} \cap e_{2}) \leq 1$ or $\rank_{N}(e_{1} \cup e_{2}) \leq 2$.
Therefore, if $v = 2$, at least one of the hypergraphs in step~\ref{step:rank4hyper:e} satisfies the claim. If $v \geq 3$, we conclude as in Case 2.

\medskip{\bf Case  4:} 
By Lemma~\ref{submo}(iv), $\rank_{N}(e_{1}\cup e_{2})\leq 3$, hence the hypergraph in step~\ref{step:rank4hyper:f} satisfies the claim.

\medskip
To conclude the proof of correctness, it suffices to verify the following claim.
\begin{claim}\label{cla:minimal}
    Let $N\in \ABCD{v}$, with $N\geqhyp \Lambda$. Then, there exists $N\rq \in Z$, such that $N\leq N\rq$.
\end{claim}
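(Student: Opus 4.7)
The plan is to combine Claim~\ref{cla:future} with the termination of the algorithm to produce, for each $N \in \ABCD{v}$ with $N \geqhyp \Lambda$, a finite path through the hypergraphs actually visited by the algorithm, ending at one whose associated matroid lies below $N$ in the dependency order. I would construct a sequence $\Lambda = \Delta^{(0)}, \Delta^{(1)}, \ldots, \Delta^{(s)}$ by induction on $k$: the base case $\Delta^{(0)} = \Lambda$ is popped from $L$ first and satisfies $N \geqhyp \Delta^{(0)}$ by hypothesis. Given $\Delta^{(k)}$ popped from $L$ with $N \geqhyp \Delta^{(k)}$, either $\Delta^{(k)}$ satisfies the conditions of Lemma~\ref{condi matr} and we stop, or Claim~\ref{cla:future} supplies a successor $\Delta^{(k+1)}$ \emph{among the hypergraphs pushed onto $L$ at this step} with $N \geqhyp \Delta^{(k+1)}$. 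Since every hypergraph pushed onto the stack is eventually popped, $\Delta^{(k+1)}$ will indeed be visited in turn.

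Termination of Algorithm~\ref{alg:computeleaf}, established by the same strictly increasing order on hypergraphs used for Algorithm~\ref{algo 21}, ensures this sequence is finite. Its final term $\Delta = \Delta^{(s)}$ therefore satisfies the conditions of Lemma~\ref{condi matr} together with $N \geqhyp \Delta$. By Lemma~\ref{un min}, the matroid $M_\Delta$ defined in Lemma~\ref{condi matr} is the unique minimal element of $\minabovehyp{\Delta}$, hence $N \geq M_\Delta$. Step~\ref{step:rank4hyper:g} of the algorithm then adds $M_\Delta$ to $Y$, and $Z$ is extracted from $Y$ as its set of minimal elements via Algorithm~\ref{alg:algocompare5}. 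Consequently, there exists $N' \in Z$ with $M_\Delta \geq N'$, and by transitivity $N \geq N'$, as claimed.

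The main obstacle is ensuring at each inductive step that the successor dictated by Lemma~\ref{submo} truly corresponds to one of the hypergraphs pushed by Algorithm~\ref{alg:computeleaf}, rather than to a merely hypothetical extension of $\Delta^{(k)}$. This is where the stratification parameter $v$ plays a decisive role: in Cases~2 and~3 with $v \geq 3$, the ``intersection'' branch is not pushed onto $L$ because it would introduce a double point, which is incompatible with membership of $N$ in $\ABCD{v}$; similarly, in Case~1 with $v = 4$, only the ``union'' branch is pushed, and one must invoke $\Ccal_3(N) = \Ccal_3(M)$ together with the hypothesis that $e_1 \cap e_2$ is not contained in any Type-$2$ edge of $\Delta^{(k)}$ to force $\rank_N(e_1 \cup e_2) \leq 3$. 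Although this verification is essentially a case check, it matches exactly the analysis performed in the proof of Claim~\ref{cla:future}, and can be invoked verbatim to close the argument.
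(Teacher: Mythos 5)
Your proof is correct and follows essentially the same path as the paper, which simply remarks that the argument is identical to that of Claim~\ref{cla:final}: iterate Claim~\ref{cla:future} until termination, apply Lemma~\ref{un min} to get $N \geq M_\Delta$, and finish by the minimality of $Z$ and transitivity. Your additional discussion of how the stratification parameter $v$ affects which branches are pushed is a faithful re-examination of what already occurs inside the proof of Claim~\ref{cla:future}, not a departure from it.
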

 The proof of Claim~\ref{cla:minimal} is identical to the proof of Claim~\ref{cla:final} for the general case of Algorithm~\ref{algo 21}, and is therefore omitted.\qed

\begin{example}\label{ex: hypergraph lambda}
Consider the labeled hypergraph $\Lambda=\Lambda_{2}\cup \Lambda_{3}$ on $[6]$, where
\[
\Lambda_{2}=\{\{1,3,4\}\}, \qquad 
\Lambda_{3}=\{\{1,2,5,6\},\{3,4,5,6\}\}.
\]
Applying Algorithm~\ref{alg:computeleaf} to $\Lambda$, we find that $\minabovehyp{\Lambda}$ consists of the three matroids depicted in Figure~\ref{figure maximals} (from left to right):
\begin{itemize}
\item a rank-three matroid $M_{1}$ in which all points lie on the same hyperplane,
\item a matroid $M_{2}$ in which $2$ is a coloop and both $\{1,3,4\}$ and $\{1,5,6\}$ are dependent,
\item a matroid $M_{3}$ in which $\{3,4\}$ forms a double point.
\end{itemize}
\end{example}

 \begin{figure}[H]
    \centering
    \begin{subfigure}[b]{0pt}
  \phantomsubcaption
  \label{fig:5a}
\end{subfigure}%
\begin{subfigure}[b]{0pt}
  \phantomsubcaption
  \label{fig:5b}
\end{subfigure}%
\begin{subfigure}[b]{0pt}
  \phantomsubcaption
  \label{fig:5c}
\end{subfigure}
    \includegraphics[width=0.7\textwidth, trim=0 0 0 0, clip]{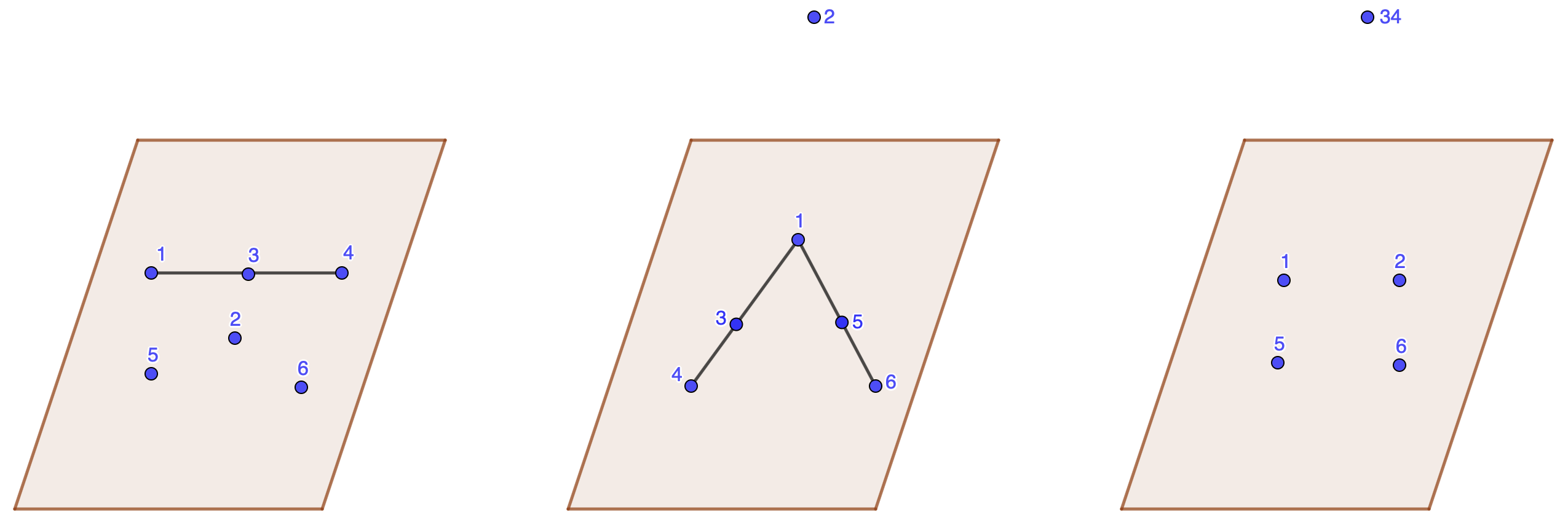}
    \caption{Matroids in $\minabovehyp{\Lambda}$ for the hypergraph $\Lambda$ from Example~\ref{ex: hypergraph lambda}: 
(a) $M_1$ (Left); 
(b) $M_2$ (Center); 
(c) $M_3$ (Right).}
    \label{figure maximals}
\end{figure}

\begin{remark}
For $v = 4$, each hypergraph $\Delta \in L$ preserves the same Type~2 edges as those in $\Delta_{M}$. As a result, Cases~3 and 4 do not arise. Thus, in this case, the stack $L$ consists of a single hypergraph throughout the entire process, and $Z$ contains a single matroid.
\end{remark}

\subsection{Algorithm for identifying $\minabove{M}$}
We present an algorithm to determine $\minabove{M}$. From Equation~\eqref{unio}, we have:
\[
    \minabove{M} = \minposet{\amalg_{1 \leq i \leq 4} \, \minposet{\ABCD{i}}}.
\]
The algorithm computes $\minposet{\ABCD{i}}$ for $1 \leq i \leq 4$ using Algorithm~\ref{alg:computeleaf}, then identifies the maximal matroids among these with Algorithm~\ref{alg:algocompare5}. A Python implementation is available at
\url{https://github.com/rprebet/maximal_matroids}.
We assume the input is a simple matroid of rank four, which is valid since any matroid $M$ can be reduced to a simple matroid $M_{\textup{red}}$ by removing loops and identifying double points (see Subsection~\ref{ssec:remove2loops}). As shown in Lemmas~\ref{lem:removeloops} and~\ref{lem:remove2points}, there is a correspondence between the maximal matroid degenerations of $M$ and $M_{\textup{red}}$, simplifying the problem.

\begin{algorithm}
\caption{Optimized maximal matroid degenerations}\label{algo 2}
\begin{algorithmic}[1]
\Require A simple matroid $M$ of rank 4.
\Ensure The set $\minabove{M}$\vs.
\State\label{step:algmin4:1} Compute $\minposet{\ABCD{1}}=\{M(i):i\in [d]\}$, using the notation of  Definition~\ref{def:designloop}.\vs

\State\label{step:algmin4:2} Compute $\minposet{\ABCD{v}}$ for each $v=2,3,4$ as follows:
\begin{enumerate}[label={(\alph*)}]
\item Construct the set $\Pcal_v$, of all $x\subset [d]$ of size $v$ such that $x\not \in \mathcal{D}(M)$.

\item For each $x\in \Pcal_v$, do the following:
\begin{enumerate}[label=(\roman*)]
\item Construct the labeled hypergraph $\Delta_{x}=(\Delta_{M}\cup \{x\})_{\text{red}}$ where $x$ is assigned Type~$v-1$.

\item Compute the set $W_x$ of maximal matroid degenerations of $\Delta_x$ in $\ABCD{v}$, by calling Algorithm~\ref{alg:computeleaf} on input $\Delta_x$ and $v$.
\end{enumerate}

\item Finally, using Algorithm~\ref{alg:algocompare5}, return the maximal matroids in the set 
$\cup_{x \in \Pcal_v} W_x$\vs
\end{enumerate}

\State\label{step:algmin4:3} Using Algorithm~\ref{alg:algocompare5} compute successively:
\begin{enumerate}[label=(\alph*)]
\item  $L_{3} = \left\{ N \in \minposet{\ABCD{3}}: 
\nexists N' \in \minposet{\ABCD{4}}
\hphantom{ \cup L_{3} \cup L_{2}}\,\,
,\; N\leq N\rq \right\}$;\vs

\item  $L_{2} = \left\{ N \in \minposet{\ABCD{2}}:
\nexists N' \in \minposet{\ABCD{4}}\cup L_{3}
\hphantom{\cup L_{2}}\;\,
,\; N\leq N\rq \right\}$;\vs

\item  $L_{1} = \left\{ N \in \minposet{\ABCD{1}}:
\nexists N' \in \minposet{\ABCD{4}} \cup L_{3} \cup L_{2}
,\; N\leq N\rq \right\}$.\vs

\end{enumerate}
\State Return $L_{1} \cup L_{2} \cup L_{3} \cup \minposet{\ABCD{4}}$
\end{algorithmic}
\end{algorithm}

\medskip
\noindent {\bf Correctness of Algorithm~\ref{algo 2}.} 
First, we show that each $\minposet{\ABCD{v}}$ is correctly computed at steps~\ref{step:algmin4:1} and~\ref{step:algmin4:2}.
The case for $\minposet{\ABCD{1}}$ is straightforward, as adding more than one loop to $M$ results in non-maximal elements, and the matroids obtained at step~\ref{step:algmin4:1} are not pairwise comparable.
For $v \geq 2$, we aim to establish the following equality:
\begin{equation}\label{sz2}
\textup{max} \{\bigcup_{x\in\Pcal_v}W_{x}\} = \minposet{\ABCD{v}}
\end{equation}
where 
$\Pcal_v$ is the set of all $x\subset [d]$ of size $v$ such that $x\not \in \mathcal{D}(M)$.

\medskip

To prove the inclusion $\supset$, let $N \in \minposet{\ABCD{v}}$. Since $N < M$, there exists $x \in \mathcal{D}(N) \setminus \mathcal{D}(M)$ with $\size{x} = v$. Then, $x \in \Pcal_v$ and $N \geqhyp \Delta_{x}$. Therefore, we have:
\[
N\in \minposet{\ABCD{v}}\cap \{N\rq:N\rq\geqhyp \Delta_{x}\}
\subset \minposet{N\rq\in \ABCD{v}:N\rq \geqhyp \Delta_{x}}=W_{x}.
\]
Moreover, since $W_x \subset \ABCD{v}$, it follows that $N\in \ABCD{v}$ belongs to $\textup{max} \{\cup_{x\in\Pcal_v} \,W_{x}\}$.

\smallskip

To establish the inclusion $\subset$, 
let $N \in \textup{max} \{\cup_{x\in\Pcal_v} \,W_{x}\}$. By contradiction, suppose that $N \not \in \minposet{\ABCD{v}}$. Then, there exists $N' \in \ABCD{v}$ satisfying $N < N' < M$.
Since $N\rq<M$, there exists $x\in \mathcal{D}(N\rq)\setminus \mathcal{D}(M)$ with $\size{x}=v$. Then, $x\in \Pcal_v$ and $N< N\rq \geqhyp \Delta_{x}$.
Hence, 
\[
    N\in W_{x}=\minposet{N^{\ast}\in \ABCD{v}:N^{\ast} \geqhyp \Delta_{x}},
\]
implying that $N\rq \leq N$, which contradicts $N < N'$. This shows that $N\in \minposet{\ABCD{v}}$.

\medskip
It remains to show that, after step~\ref{step:algmin4:3}, we have
   $ \minabove{M} = L_1 \cup L_2 \cup L_3 \cup \textup{max} \{\ABCD{4}\}$.
This follows directly from the definition of $\ABCD{v}$, as no matroid in $\ABCD{v}$ can be smaller than or equal to a matroid in $\ABCD{i}$ for any $i < v$. Specifically, we have $\Ccal_{j}(N) = \Ccal_{i}(M)$ for all $1 \leq j \leq i < v$. \qed

\begin{example}
Consider the simple rank-four matroid $M$ on $[6]$ whose set of circuits is
\[
\{\{1,2,3,4\},\{3,4,5,6\},\{1,2,5,6\}\}.
\]
Applying Algorithm~\ref{algo 2} to $M$, we obtain that $\minabove{M}$ consists of the following $10$ matroids:
\begin{itemize}
\item the uniform matroid of rank three $U_{3,6}$,
\item the matroid in which $2$ is a coloop and both $\{1,3,4\}$ and $\{1,5,6\}$ are dependent (see Figure~\ref{fig:5b}), together with the five analogous matroids obtained by declaring any of the points in $\{1,3,4,5,6\}$ to be a coloop,
\item the three matroids in which the pairs $\{3,4\}$, $\{1,2\}$, or $\{5,6\}$ form a double point (see Figure~\ref{fig:5c}), for the case in which $\{3,4\}$ is a double point).
\end{itemize}
\end{example}

\begin{remark}
In principle, this optimized algorithm could be extended from rank $n=4$ to arbitrary $n$. The main theoretical challenge lies in extending the approach used in Lemma~\ref{condi matr} to refine Lemma~\ref{def m} for arbitrary ranks. However, in the general case, the relevant cases to consider do not appear to be immediately clear. 
Furthermore, as discussed in Remark~\ref{rem:limits}, the proposed optimization does not appear to provide a substantial improvement in the algorithm’s efficiency for practical applications. This observation is supported by experimental results, which indicate similar performance even for rank 5.
\end{remark}

\section{Examples}\label{examples}

In this section, we apply our algorithm to several classical rank-four
configurations to illustrate its effectiveness in computing maximal matroid
degenerations and, consequently, the irreducible decompositions of their circuit
varieties.  These computations lie beyond the capabilities of current symbolic
and numerical algebra systems.  The proofs of the technical lemmas used here are
provided in Section~\ref{appen}.

Each example highlights a distinct structural phenomenon.  
The Vámos matroid illustrates extreme non-realizability; the Steiner system
$S(3,4,8)$ shows how high symmetry drastically reduces the number of maximal
degenerations; the dual of the Fano matroid connects our approach with
projective geometries; and the dual of $K_{3,3}$ marks the computational
boundary of the method.  
These examples also illustrate the practical efficiency of Algorithm~\ref{alg:computeleaf}: in all
cases, including the largest one, the dual of $K_{3,3}$, the algorithm terminates
quickly and produces the complete list of maximal degenerations.


\medskip
We begin by recalling the notion of the automorphism group of a matroid, which will be used in the examples of this section to group the maximal matroid degenerations into symmetry classes.

\begin{definition}
Let $M$ be a matroid on the ground set $[d]$. The {\em automorphism group} of $M$, denoted $\text{Aut}(M)$, is the subgroup of all permutations $\sigma\in \mathbb{S}_{d}$ that preserve dependent sets of $M$, meaning that $X\in \mathcal{D}(M)$ if and only if $\sigma(X)\in \mathcal{D}(M)$.
\end{definition}

\subsection{V\'{a}mos matroid}\label{Vamos}

Consider the {\bf V\'{a}mos matroid} $M_{\textup{V\'{a}mos}}$ depicted in Figure~\ref{fig:6a}. This matroid is a paving matroid of rank four that is not representable over any field. Its collection of dependent hyperplanes is given by:
\[\{\{1,2,3,4\},\{3,4,5,6\},\{5,6,7,8\},\{7,8,1,2\},\{3,4,7,8\}\}.\]
The non-realizability of $M_{\textup{V\'{a}mos}}$ arises from the absence of the dependency $\{1,2,5,6\}$. By incorporating this missing dependent hyperplane, we obtain a realizable matroid, denoted by $A$, as shown in Figure~\ref{fig:6b}.

\begin{figure}[H]
    \centering
    \begin{subfigure}[b]{0pt}
  \phantomsubcaption
  \label{fig:6a}
\end{subfigure}%
\begin{subfigure}[b]{0pt}
  \phantomsubcaption
  \label{fig:6b}
\end{subfigure}%
\begin{subfigure}[b]{0pt}
  \phantomsubcaption
  \label{fig:6c}
\end{subfigure}
    \includegraphics[width=0.7\textwidth, trim=0 0 0 0, clip]{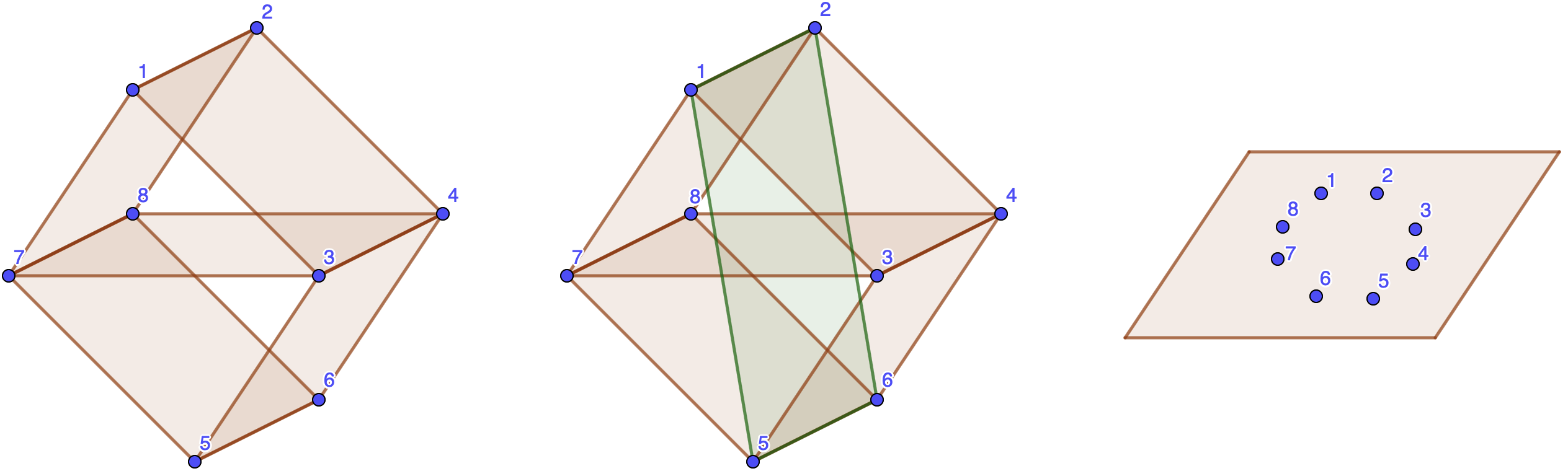}
    \caption{(a) V\'{a}mos matroid $M_{\textup{V\'{a}mos}}$ (Left); (b) Realizable degeneration $A$ of V\'{a}mos matroid (Center); (c) The uniform matroid $U_{3,8}$ (Right).}
    \label{figure 1}
\end{figure}

Next, we introduce several matroids that are smaller than $M_{\textup{V\'{a}mos}}$ in the 
weak order poset: 

\begin{itemize}
\item[{\rm (i)}] Let $B_{1}$ and $B_{2}$ denote the matroids in which the points $\{1,2,5,6,a,b\}$ lie on the same hyperplane, where $\{a,b\}=\{3,4\}$ for $B_{1}$ and $\{a,b\}=\{7,8\}$ for $B_{2}$; see Figure~\ref{fig:7a}.
\item[{\rm (ii)}] Let $C_{1}$ and $C_{2}$ denote the matroids in which the points $\{3,4,7,8,a,b\}$ lie on the same hyperplane, where $\{a,b\}=\{5,6\}$ for $C_{1}$ and $\{a,b\}=\{1,2\}$ for $C_{2}$; see Figure~\ref{fig:7b}.
\item[{\rm (iii)}] Let $D_{1}$ and $D_{2}$ denote the matroids obtained from $M_{\textup{V\'{a}mos}}$ by identifying the points $\{3,4\}$ and  $\{7,8\}$, respectively; see Figure~\ref{fig:7c}.
\item[{\rm (iv)}] Let $E_{1}$ and $E_{2}$ denote the matroids obtained from $M_{\textup{V\'{a}mos}}$ by identifying the points $\{1,2\}$ and  $\{5,6\}$, respectively; see Figure~\ref{fig:7d}.
\item[{\rm (v)}] Consider the matroid of rank four shown in Figure~\ref{figure 2} (Right), with the set of circuits of size three $\{\{1,3,4\},\{1,5,6\},\{1,7,8\}\}$ and set of circuits of size four $\{\{5,6,7,8\},\{3,4,5,6\},\{3,4,7,8\}\}$. We denote by $F_{i}$, with $i\in [8]$, the matroids obtained by applying an automorphism of $M_{\textup{V\'{a}mos}}$ to this matroid; see Figure~\ref{fig:7e}.

\end{itemize}

\begin{figure}[H]
    \centering
    \begin{subfigure}[b]{0pt}
  \phantomsubcaption
  \label{fig:7a}
\end{subfigure}%
\begin{subfigure}[b]{0pt}
  \phantomsubcaption
  \label{fig:7b}
\end{subfigure}%
\begin{subfigure}[b]{0pt}
  \phantomsubcaption
  \label{fig:7c}
\end{subfigure}
\begin{subfigure}[b]{0pt}
  \phantomsubcaption
  \label{fig:7d}
\end{subfigure}
\begin{subfigure}[b]{0pt}
  \phantomsubcaption
  \label{fig:7e}
\end{subfigure}
    \includegraphics[width=0.8\textwidth, trim=0 0 0 0, clip]{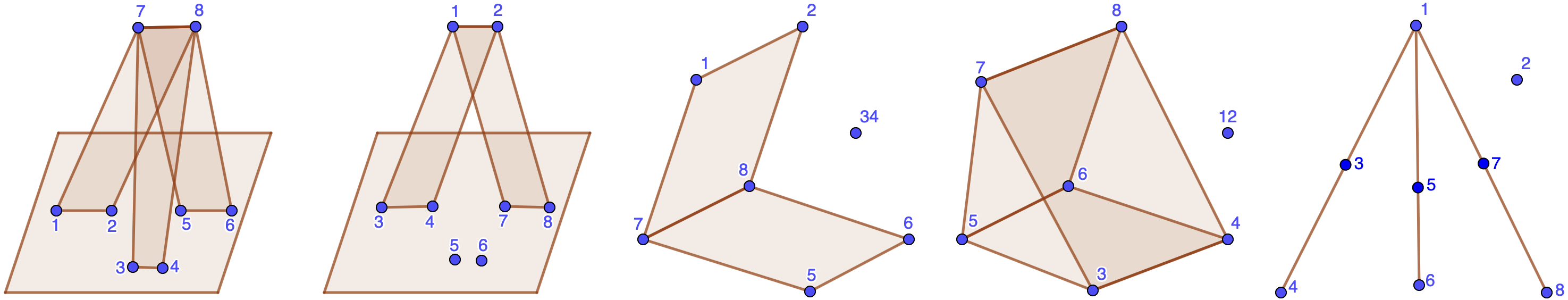}
    \caption{(a) Matroid $B_{1}$ (b) Matroid $C_{1}$ (c) Matroid $D_{1}$ (e) Matroid $E_{1}$ (f) Matroid $F_{1}$.}
    \label{figure 2}
\end{figure}

Observe that the matroids $C_{1}$ and $C_{2}$ satisfy the conditions of Theorem~\ref{teo ir} (ii), which implies $V_{\Ccal(C_{i})}=V_{C_{i}}\cup V_{U_{3,8}}$,
for $i=1,2$, where $U_{3,8}$ denotes the uniform matroid of rank three on $[8]$, see Figure~\ref{fig:6c}. On the other hand, the matroids $D_{1}$ and $D_{2}$ satisfy the conditions of Theorem~\ref{teo ir} (i), which implies that their matroid varieties and their circuit varieties coincide. We will also use the following lemma:

\begin{lemma}\label{lema red}
Let $A$ be the matroid in Figure~\ref{fig:6b}. The following statements hold:
\begin{itemize}
\item[{\rm (i)}] For $i=1,2$, $V_{\Ccal(E_{i})}\subset V_{A}\cup V_{\Ccal(C_{i})}$.
\item[{\rm (ii)}] We have 
\vspace{-5mm}\begin{equation}\label{fi}
\bigcup_{i=1}^{8}V_{\Ccal(F_{i})}\subset V_{A}\bigcup_{j=1}^{2}V_{\Ccal(D_{j})}\bigcup_{k=1}^{2}V_{\Ccal(E_{k})}.
\end{equation}
\item[{\rm (iii)}] We have 
\vspace{-5mm}\begin{equation}\label{bi}
\bigcup_{i=1}^{2}V_{\Ccal(B_{i})}\subset V_{A}\bigcup_{j=1}^{2}V_{\Ccal(C_{j})}\bigcup_{k=1}^{2}V_{\Ccal(D_{k})}\bigcup_{l=1}^{2}V_{\Ccal(E_{l})}\bigcup_{r=1}^{8}V_{\Ccal(F_{r})}.
\end{equation}
\end{itemize}
\end{lemma}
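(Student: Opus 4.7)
The plan is to prove each of the three inclusions by fixing a configuration $\gamma = (\gamma_1,\ldots,\gamma_8)$ of vectors in $\CC^4$ belonging to the left-hand side, and performing a case analysis based on the ranks of distinguished subconfigurations. In every case, either additional linear dependencies are forced so that $\gamma$ lies in one of the circuit varieties $V_{\Ccal(N)}$ appearing on the right, or we are in a ``generic'' case where $\gamma$ must be exhibited as a limit of realizations of $A$, hence lying in $V_A$. This bifurcation pattern is uniform across the three parts.

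For part (i), consider $\gamma \in V_{\Ccal(E_1)}$ (the case $i=2$ is symmetric). The defining feature of $E_1$ is that $\gamma_1$ and $\gamma_2$ are collinear, while all Vámos dependencies hold. The key case distinction is on the rank of the six vectors $\{\gamma_3,\gamma_4,\gamma_5,\gamma_6,\gamma_7,\gamma_8\}$. If this rank is at most three, then every four-subset of these vectors is linearly dependent, which is precisely the additional dependent hyperplane imposed by $C_1$; together with the Vámos dependencies this yields $\gamma \in V_{\Ccal(C_1)}$. Otherwise the rank is four, and I would construct a deformation $\gamma_2(t) = \gamma_2 + tv$ where $v$ is selected from the intersection, modulo the line spanned by $\gamma_1$, of the three planes $\sspan\{\gamma_1,\gamma_3,\gamma_4\}$, $\sspan\{\gamma_1,\gamma_5,\gamma_6\}$, and $\sspan\{\gamma_1,\gamma_7,\gamma_8\}$; for small $t \neq 0$, $\gamma(t)$ becomes an $A$-realization, placing $\gamma$ in $V_A$.

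Parts (ii) and (iii) follow the same template with richer case analysis. For (ii), each $F_i$ contains three three-element circuits sharing a common point (for the $F$ drawn in Figure~\ref{figure 2}, the circuits $\{1,3,4\}$, $\{1,5,6\}$, $\{1,7,8\}$); the cases to distinguish in $V_{\Ccal(F_i)}$ are whether any of the four pairs $\{1,2\}$, $\{3,4\}$, $\{5,6\}$, $\{7,8\}$ collapses to a double point. Each such collapse lands $\gamma$ in a corresponding $V_{\Ccal(D_j)}$ or $V_{\Ccal(E_k)}$; if no collapse occurs, an argument analogous to part (i), this time deforming $\gamma_1$ off the three coplanar lines, places $\gamma$ in $V_A$. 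For (iii), the condition $\gamma \in V_{\Ccal(B_1)}$ imposes the six-point dependent hyperplane $\{1,2,3,4,5,6\}$, and the case analysis must now encompass rank-three collapses on various six-element subsets (placing $\gamma$ in some $V_{\Ccal(C_j)}$), pairwise identifications (giving $V_{\Ccal(D_k)}$ or $V_{\Ccal(E_l)}$), intermediate coincidences yielding one of the $V_{\Ccal(F_r)}$, and finally a generic case again handled by deformation to an $A$-realization.

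The principal obstacle in each part is the construction of the deformation in the generic case. The allowed perturbation directions are constrained by the five Vámos hyperplanes together with the $A$-hyperplane $\{1,2,5,6\}$, and they must simultaneously break the extra dependency defining the LHS matroid. Showing that a nontrivial direction exists requires extracting the correct transversality statement from the rank hypotheses distinguishing the generic case; once such a direction is produced, the non-emptiness of $\Gamma_A$ guarantees that nearby $A$-realizations exist, and letting $t \to 0$ places $\gamma$ in $V_A$. The repeated appearance of this deformation argument across (i), (ii), and (iii) suggests a uniform reformulation, though carrying it out cleanly in the most delicate subcases of part (iii) is likely to require dedicated lemmas, which is presumably why the proofs are deferred to Section~\ref{appen}.
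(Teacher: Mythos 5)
Your overall shape (split into cases by rank conditions, then deform in the generic case) matches the paper's strategy, but there are concrete gaps in how you carry it out. In part~(i), your dichotomy on $\rank\{\gamma_3,\dotsc,\gamma_8\}$ is weaker than what the paper does: it first invokes Theorem~\ref{teo ir}(ii) on the six-point submatroid of $E_i$ (for $E_2$, the one on $\{1,2,3,4,7,8\}$ with hyperplanes $\{1,2,3,4\},\{3,4,7,8\},\{1,2,7,8\}$) to obtain the structural decomposition $V_{\Ccal(E_i)}\subset V_{E_i}\cup V_{\Ccal(C_i)}$, and only then has to prove the cleaner inclusion $V_{E_i}\subset V_A$. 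Without that step, the rank-four branch of your split still contains configurations realizing matroids strictly above $E_i$ (for instance with $\gamma_3\parallel\gamma_4$), for which a perturbation of $\gamma_2$ alone cannot yield an $A$-realization. More seriously, the deformation direction $v$ you want may simply not exist: the intersection $\sspan\{\gamma_1,\gamma_3,\gamma_4\}\cap\sspan\{\gamma_1,\gamma_5,\gamma_6\}\cap\sspan\{\gamma_1,\gamma_7,\gamma_8\}$ of three hyperplanes of $\CC^4$, each containing $\gamma_1$, is generically one-dimensional, i.e.\ equal to $\sspan\{\gamma_1\}$, in which case there is no nontrivial direction at all. Establishing that it is actually two-dimensional is the content of the argument, not a formality. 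The paper sidesteps this entirely by invoking Theorem~\ref{poli}: $\mathcal{M}(E_i)$ is written explicitly as a family of $4\times 8$ matrices, and a single entry is perturbed by $\epsilon$ so as to land visibly in the parametrization of $\mathcal{M}(A)$.

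The same criticism applies to~(ii) and~(iii), where in addition your proposed deformation does not match the one the paper actually uses. For part~(ii) the paper's case split is simply ``$\gamma_1=0$ or not''; in the non-degenerate case it reduces to a normal form where $\{\gamma_1,\gamma_3,\gamma_5,\gamma_7,\gamma_2\}$ is a frame and then perturbs $\gamma_4,\gamma_6,\gamma_8$ by correlated scalars $\epsilon_1,\epsilon_2,\epsilon_3$ chosen proportionally, not $\gamma_1$; perturbing $\gamma_1$ off the three concurrent lines, while keeping all of $\gamma_2,\dotsc,\gamma_8$ fixed, does not obviously land in $\Gamma_A$, since the $A$-hyperplanes $\{1,2,3,4\},\{1,2,5,6\},\{1,2,7,8\}$ constrain $\gamma_1$ in terms of $\gamma_2$. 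For~(iii) your rough taxonomy of subcases is in the right spirit, but the paper's argument is organized around very specific rank conditions (e.g.\ whether $\rank\{\gamma_1,\gamma_2,\gamma_5,\gamma_6,\gamma_7,\gamma_8,\gamma_3\}\leq 3$) and concluded with explicit matrix normal forms and column rescalings rather than an abstract transversality step. You correctly identify the deformation step as the bottleneck, but the missing ingredients are precisely the two tools the paper leans on: Theorem~\ref{teo ir} to make the case split structural, and Theorem~\ref{poli} to make the infinitesimal motion concrete.
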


Let $\ABCD{i}$ for $i\in [4]$ denote the collections of matroids defined as in Subsection~\ref{abcd}. We will need the following lemma:

\begin{lemma}\label{prop}
Let $M$ be a simple matroid of rank four. Then
\[V_{\Ccal(M)}=V_{M}\bigcup_{N\in \minposet{\cup_{i=1}^{3}\ABCD{i}}}V_{\Ccal(N)}\bigcup_{N\rq \in \ABCD{4}}V_{N\rq}.\]
\end{lemma}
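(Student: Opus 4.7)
My plan is to combine the general circuit-variety decomposition of Proposition~\ref{deco circ} with the stratification $\{N>M\}=\amalg_{i=1}^{4}\ABCD{i}$, which is exhaustive since $M$ is a simple matroid of rank four. The inclusion $\supseteq$ is immediate: $V_{M}\subseteq V_{\Ccal(M)}$ by definition; and for any $N\geq M$ one has $\mathcal{D}(M)\subseteq\mathcal{D}(N)$, so $V_{\Ccal(N)}\subseteq V_{\Ccal(M)}$, and in particular $V_{N'}\subseteq V_{\Ccal(N')}\subseteq V_{\Ccal(M)}$ for every $N'\in\ABCD{4}$.

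For the $\subseteq$ direction, I would argue pointwise. Given $\gamma\in V_{\Ccal(M)}$, let $N_{\gamma}$ denote the matroid on $[d]$ whose dependent sets are exactly those $S\subseteq[d]$ for which $\{\gamma_{i}\}_{i\in S}$ is linearly dependent. By construction $\gamma$ is a realization of $N_{\gamma}$, hence $\gamma\in V_{N_{\gamma}}$; and since $\gamma$ includes all the dependencies of $M$, we have $N_{\gamma}\geq M$. If $N_{\gamma}=M$, then $\gamma\in V_{M}$ and we are done. Otherwise $N_{\gamma}>M$ and belongs to a unique stratum $\ABCD{i}$. If $i=4$, then $\gamma\in V_{N_{\gamma}}\subseteq\bigcup_{N'\in\ABCD{4}}V_{N'}$. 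If $i\in\{1,2,3\}$, by finiteness of the matroid poset on $[d]$ one can choose $N^{\ast}\in\minposet{\cup_{j=1}^{3}\ABCD{j}}$ with $N^{\ast}\leq N_{\gamma}$; from $\mathcal{D}(N^{\ast})\subseteq\mathcal{D}(N_{\gamma})$ and the fact that $\gamma$ realizes $N_{\gamma}$, it follows that $\gamma$ includes the dependencies of $N^{\ast}$, so by Definition~\ref{cir} we have $\gamma\in V_{\Ccal(N^{\ast})}$.

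The main conceptual point, which also explains the asymmetric form of the statement, is why the $\ABCD{4}$-contribution appears as a union of matroid varieties over the full stratum, rather than circuit varieties over only its minimal elements. The reason is that when $N_{\gamma}\in\ABCD{4}$ we place $\gamma$ directly in the matroid variety $V_{N_{\gamma}}$ itself, so no further iteration of Proposition~\ref{deco circ} on $N_{\gamma}$ is needed; this is fortunate, since such an iteration could otherwise continue within $\ABCD{4}$ with no obvious termination. For the three lower strata $\ABCD{1}$, $\ABCD{2}$, $\ABCD{3}$, by contrast, the realizing matroid $N_{\gamma}$ can always be dominated by a single minimal element of $\cup_{i=1}^{3}\ABCD{i}$, which makes the switch back to circuit varieties there both valid and economical. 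Aside from this observation, the only routine step is matching the definition of $V_{\Ccal(N^{\ast})}$ with the condition "$\gamma$ includes the dependencies of $N^{\ast}$".
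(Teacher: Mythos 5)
Your proof is correct and follows essentially the same approach as the paper: decompose pointwise by the realized matroid $N_{\gamma}$, treat the case $N_{\gamma}=M$, pass to a minimal element of $\cup_{i=1}^{3}\ABCD{i}$ when $N_{\gamma}$ lies in one of the three lower strata, and place $\gamma$ directly in $V_{N_{\gamma}}$ when $N_{\gamma}\in\ABCD{4}$. The paper states the stratum membership via the condition $\Ccal_{i}(N_{\gamma})\supsetneq\Ccal_{i}(M)$ for some $i\leq 3$ rather than naming the strata explicitly, but the argument is the same.
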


By applying Lemma~\ref{prop} to $M_{\textup{V\'{a}mos}}$, we obtain
\begin{equation}\label{union}
V_{\Ccal(M_{\textup{V\'{a}mos}})}=\bigcup_{N\in \minposet{\cup_{i=1}^{3}\mathcal{S}_{i}(M_{\text{V\'{a}mos}})}}V_{\Ccal(N)}\bigcup_{N\rq \in \mathcal{S}_{4}(M_{\text{V\'{a}mos}})}V_{N\rq},
\end{equation}
Note that in this expression we are using the non-realizability of $M_{\textup{V\'{a}mos}}$. 
Furthermore, by applying Algorithm~\ref{algo 2}, we deduce that all the maximal matroids in $\cup_{i=1}^{3}\mathcal{S}_{i}(M_{\text{V\'{a}mos}})$ are smaller than or equal to some matroid from the set $\{B_{i},C_{i},D_{i},E_{i},F_{i}\}$. Using this, along with Equation~\eqref{union} and Lemma~\ref{lema red}, we obtain that
\begin{equation}\label{sec eq}
V_{\Ccal(M_{\textup{V\'{a}mos}})}=V_{A}\cup V_{U_{3,8}}\bigcup_{i=1}^{2}V_{C_{i}}\bigcup_{j=1}^{2}V_{D_{j}}\bigcup_{N \in \mathcal{S}_{4}(M_{\text{V\'{a}mos}})}V_{N}.
\end{equation}
Moreover, we have the following lemma:
\begin{lemma}\label{sec lem}
The matroid varieties $V_{N}$ for $N \in \mathcal{S}_{4}(M_{\textup{V\'{a}mos}})$ are redundant in~\eqref{sec eq}.
\end{lemma}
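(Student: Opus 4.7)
The plan is to enumerate the minimal elements of $\mathcal{S}_4(M_{\textup{V\'{a}mos}})$ using Algorithm~\ref{algo 2} with $v = 4$, and then show that $V_N \subset V_A \cup V_{U_{3,8}} \cup V_{C_1} \cup V_{C_2} \cup V_{D_1} \cup V_{D_2}$ for each such minimal $N$; non-minimal $N \in \mathcal{S}_4(M_{\textup{V\'{a}mos}})$ follow by the containment $\Gamma_N \subset V_{\Ccal(N')}$ for any minimal $N' \leq N$. Since $v = 4$ forbids creation of circuits of size $\leq 3$, only Case~1 of Algorithm~\ref{alg:computeleaf} can fire, which simplifies the enumeration considerably. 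Exploiting the automorphism group of $M_{\textup{V\'{a}mos}}$, we only need to treat a small number of orbit representatives for the starting independent 4-subset $x \subset [8]$.

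I would split the analysis into two cases. First, if $x$ meets every Vámos hyperplane in at most two elements, no cascade occurs and $N$ is obtained by simply adding $x$ as a new hyperplane. The choice $x = \{1,2,5,6\}$ yields $N = A$, so $V_N = V_A$ is in the target union. For any other such $x$, the matroid $N$ is non-representable: the Vámos non-representability theorem asserts that the five Vámos coplanarities, when realized in a rank-4 space, force $\{1,2,5,6\}$ to be coplanar as well; but $\{1,2,5,6\} \notin \mathcal{D}(N)$ by construction, so no realization can exist, giving $\Gamma_N = \emptyset$ and hence $V_N = \emptyset$. Second, if $x$ triggers Case~1, the submodularity cascade iteratively unites Type-3 edges into a larger rank-3 flat, terminating either in a 6-point flat matching one of $B_1, B_2, C_1, C_2$, or in an 8-point flat forcing $\rank(N) \leq 3$. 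In the latter case, $N \geq U_{3,8}$ and hence $V_N \subset V_{U_{3,8}}$. In the former, $N \geq K$ for some $K \in \{B_1, B_2, C_1, C_2\}$, so $\Gamma_N \subset V_{\Ccal(K)}$ and $V_N \subset V_{\Ccal(K)}$; applying Theorem~\ref{teo ir}(ii) when $K = C_i$, and Lemma~\ref{lema red}(iii) together with parts~(i)--(ii) of the same lemma and Theorem~\ref{teo ir} when $K = B_i$, each $V_{\Ccal(K)}$ embeds in the required union.

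The main obstacle is the combinatorial case analysis of the cascade. For each orbit representative $x$, one must trace the iterated applications of Case~1 and confirm that the resulting rank-3 flat always matches one of the listed configurations. The Vámos symmetry reduces the workload substantially, and the authors' implementation of Algorithm~\ref{algo 2} can automate the verification. A cleaner ingredient is the non-representability argument for non-cascading extensions different from $A$, which follows directly from the Vámos theorem applied to the five underlying coplanarities.
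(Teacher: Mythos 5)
Your proposal diverges from the paper's argument in a way that leaves a genuine gap. The paper does \emph{not} reduce to the minimal elements of $\mathcal{S}_4(M_{\textup{V\'{a}mos}})$. It argues directly for every $N \in \mathcal{S}_4(M_{\textup{V\'{a}mos}})$, splitting into two cases: (Case~1) some pair of V\'{a}mos hyperplanes collapse into a common dependent hyperplane of $N$, which forces $N \geq B_i$ or $N \geq C_j$, so $V_N \subset V_{\Ccal(N)} \subset V_{\Ccal(B_i)}$ or $V_{\Ccal(C_j)}$, and these land in the target by Lemma~\ref{lema red} and Theorem~\ref{teo ir}; (Case~2) no collapse occurs, in which case either $\{1,2,5,6\}$ is independent in $N$ and $N$ is non-realizable (the V\'{a}mos argument, as in your proposal), or $N \geq A$, and then an explicit parametrization of $\text{Gr}(A,\CC)$ combined with an infinitesimal perturbation of the extra vanishing minors proves $V_N \subset V_A$.

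The crux you are missing is this last step. When you pass from a non-minimal $N$ down to a minimal $N' \leq N$, you obtain $V_N \subset V_{\Ccal(N')}$, but the target union in \eqref{sec eq} consists of matroid varieties, not circuit varieties, so you then need $V_{\Ccal(N')}$ itself to lie in the target. For $N' \in \{B_1,B_2,C_1,C_2,U_{3,8}\}$ this can be arranged via Lemma~\ref{lema red} and Theorem~\ref{teo ir}, but for the minimal element $N' = A$ you would need to know that $V_{\Ccal(A)}$ is contained in $V_A \cup V_{U_{3,8}} \cup \bigcup V_{C_i} \cup \bigcup V_{D_j}$, which is essentially the proposition you are trying to prove and is therefore circular. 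The same problem arises for your non-realizable minimal $N'$: you correctly conclude $V_{N'} = \emptyset$, but that says nothing about $V_{\Ccal(N')}$, which is generically nonempty. The paper's moduli-space perturbation argument, which establishes $V_N \subset V_A$ directly for \emph{any} $N \in \mathcal{S}_4(M_{\textup{V\'{a}mos}})$ with $N \geq A$ and no collapsed hyperplanes, is precisely what closes this gap, and it is absent from your proposal. Your Case~1 cascade analysis and the V\'{a}mos non-realizability observation are sound and do appear in the paper's proof in similar form, but without the perturbation step the argument does not go through.
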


\begin{proposition}
The irreducible decomposition of the circuit variety of $M_{\textup{V\'{a}mos}}$ is 
\[V_{\Ccal(M_{\textup{V\'{a}mos}})}=V_{A}\cup V_{U_{3,8}}\bigcup_{i=1}^{2}V_{C_{i}}\bigcup_{j=1}^{2}V_{D_{j}}.\]
\end{proposition}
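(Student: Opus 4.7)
The plan is to combine Equation~\eqref{sec eq} with Lemma~\ref{sec lem}, and then verify that the resulting decomposition is indeed irreducible and irredundant.

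First, I would start from Equation~\eqref{sec eq}, which already expresses $V_{\Ccal(M_{\textup{V\'{a}mos}})}$ as a finite union of matroid varieties, namely $V_A$, $V_{U_{3,8}}$, the $V_{C_i}$ and $V_{D_j}$, together with extra terms $V_N$ indexed by $N \in \mathcal{S}_4(M_{\textup{V\'{a}mos}})$. Invoking Lemma~\ref{sec lem} to remove the latter, redundant, terms immediately yields the set-theoretic equality claimed in the proposition.

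Next, I would establish the irreducibility of each remaining matroid variety. For $V_{U_{3,8}}$ this is classical, as it is the Zariski closure of the rank-$3$ locus in $\CC^{3\times 8}$. For $V_{C_1}$, $V_{C_2}$, $V_{D_1}$, and $V_{D_2}$, I would apply Theorem~\ref{teo ir}(iii), first reducing each $D_j$ to a simple matroid by identifying the double point, and then producing explicit orderings of the ground set (starting from a basis) under which each successively added point has degree at most two in the corresponding submatroid. The presence of the additional hyperplane in $C_i$, and of the identification in $D_j$, both lower the degrees of the relevant points, making such orderings available.

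Finally, I would verify irredundancy by exhibiting, for each pair of components, a realization lying in one but not the other. Since each of the matroids $A$, $U_{3,8}$, $C_i$, and $D_j$ either imposes a distinct extra dependency or identifies distinct points, generic realizations of one do not satisfy the additional dependencies of another. The main obstacle is expected to be the irreducibility of $V_A$: because every point of $A$ lies in three dependent hyperplanes, no inductive-connectedness ordering exists, so a direct parametric argument --- choosing coordinates freely on a basis and then solving the linear-dependency conditions imposed by each hyperplane through the remaining points --- will be needed to prove that the realization space of $A$, and hence $V_A$, is irreducible.
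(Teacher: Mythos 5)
Your proposal matches the paper's proof in its overall structure: combine Equation~\eqref{sec eq} with Lemma~\ref{sec lem} to obtain the set-theoretic equality, establish irreducibility of each remaining $V_N$, and check non-redundancy. Your treatment of $U_{3,8}$, $C_i$, $D_j$ via Theorem~\ref{teo ir}(iii) (after reducing the $D_j$ to simple matroids) is the same as the paper's, and your observation that $A$ cannot be inductively connected is correct and well-justified: every point of $A$ has degree three, so the final point in any ordering would necessarily have degree three in the full restriction, violating Definition~\ref{induc}.

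The one place where you diverge from the paper is the irreducibility of $V_A$. The paper simply cites an external reference (\cite[Table~5.1]{corey2023singular}) rather than giving an argument. Your proposal of a direct parametric argument is a legitimate and in fact more self-contained alternative; it is essentially what the paper carries out in Remark~\ref{rem} for a different purpose. There, one parametrizes the moduli space of the inductively connected matroid with four hyperplanes, then imposes the two additional dependencies of $A$, which reduce to a single linear condition ($z=1$). This exhibits $\mathcal{M}(A)$ as an open subset of an affine space, whence $\Gamma_A$ and $V_A$ are irreducible. Your route buys self-containedness at the modest cost of having to actually carry out the parametrization; the paper's citation is shorter but opaque. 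In sum, the proposal is correct, identical in structure to the paper's proof, and replaces one black-box citation with a transparent (if unexecuted) argument.
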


\begin{proof}
Using Lemma~\ref{sec lem}, Equation~\eqref{sec eq} directly leads to the above decomposition.
We know that $V_{A}$ is irreducible, as indicated in \textup{\cite[Table~5.1]{corey2023singular}}. Since all matroids in this decomposition, except for $A$, are inductively connected, Theorem~\ref{teo ir} implies that their associated varieties are all irreducible. Moreover, it is easy to verify the non-redundancy of this decomposition. Thus, we have the irreducible decomposition of $V_{M_{\textup{V\'{a}mos}}}$. 
\end{proof}

\subsection{The unique $S(3,4,8)$}\label{uniq}

To introduce the matroid $M_{\textup{Steiner}}$, the focus of this section, we begin by recalling the notion of Steiner systems.

\begin{definition}\label{steiner}
A {\em Steiner system} with parameter dependencies $n\leq k\leq d$, denoted $S(n,k,d)$, consists of a collection of $k$-elements subsets of $[d]$, called {\em blocks}, such that every $n$-element subset of $[d]$ is contained in exactly one block. Each Steiner system $S(n-1,k,d)$ defines an $n$-paving matroid on $[d]$, where the blocks correspond to the dependent hyperplanes. For further details, see \cite{van2018number}.
\end{definition}

\begin{example}
The~collection~$\{1,2,4\},\{1,3,7\},\{1,5,6\},\{2,3,5\},\{4,5,7\},\{2,6,7\},\{3,4,\\ 6\}$ of subsets of $[7]$
constitutes the blocks of an $S(2,3,7)$ Steiner system. This system defines a point-line configuration, where each block corresponds to a line. The associated matroid is the \emph{Fano plane}, see Figure~\ref{fano}.
\end{example}

Consider the unique {\bf Steiner quadruple system} $S(3,4,8)$, which defines a paving matroid of rank four, denoted $M_{\textup{Steiner}}$, with the following dependent hyperplanes: 
\begin{equation*}
\begin{aligned}
\{&\{1,2,4,8\},\{2,3,5,8\},\{3,4,6,8\},\{4,5,7,8\},\{1,5,6,8\},\{2,6,7,8\},\{1,3,7,8\},\\
& \{3,5,6,7\},\{1,4,6,7\},\{1,2,5,7\},\{1,2,3,6\},\{2,3,4,7\},\{1,3,4,5\},\{2,4,5,6\}\},
\end{aligned}
\end{equation*}
This matroid can also be viewed as the set of points in the three-dimensional affine plane over $\mathbb{F}_{2}$.
Using Algorithm~\ref{algo 2}, we establish that the set $\minabove{M_{\textup{Steiner}}}$ consists of the following matroids: 

\begin{itemize}
\item[{\rm (i)}] The matroids obtained from $M_{\textup{Steiner}}$ by identifying the four points that lie in a dependent hyperplane; see Figure~\ref{fig:8a}.
\item[{\rm (ii)}] A matroid where $\rank(\{1,2,3,4,5,6,7\})=3$, and the circuits of size three are
\[\{1,2,4\},\{1,3,7\},\{1,5,6\},\{2,3,5\},\{4,5,7\},\{3,4,6\},\{2,6,7\},\]
along with all the matroids obtained from this by applying an automorphism; see Figure~\ref{fig:8b}. 
\item[{\rm (iii)}] The uniform matroid $U_{3,8}$; see Figure~\ref{fig:8c}.
\item[{\rm (iv)}] The matroids $M_{\textup{Steiner}}(i)$ for $i\in [8]$.
\end{itemize}

There are $14$ matroids of type  (i) and $8$ matroids of   type (ii). We label these as $A_{i}$ and $B_{j}$, respectively, where $i\in [14]$ and $j\in [8]$.

\begin{figure}[H]
    \centering
    \begin{subfigure}[b]{0pt}
  \phantomsubcaption
  \label{fig:8a}
\end{subfigure}%
\begin{subfigure}[b]{0pt}
  \phantomsubcaption
  \label{fig:8b}
\end{subfigure}%
\begin{subfigure}[b]{0pt}
  \phantomsubcaption
  \label{fig:8c}
\end{subfigure}
    \includegraphics[width=0.8\textwidth, trim=0 0 0 0, clip]{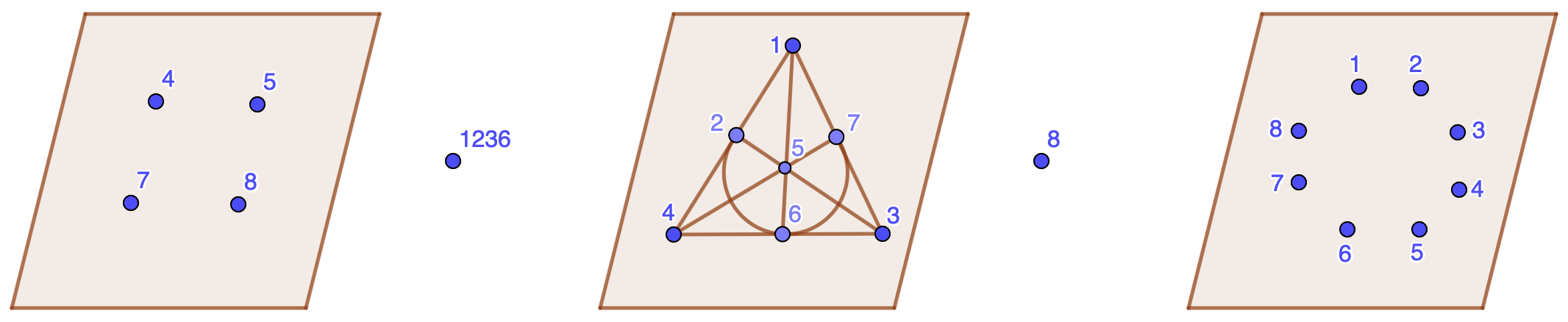}
    \caption{Maximal matroid degenerations 
    of $S(3,4,8)$ in items (i), (ii), and (iii) above: (a) $A_{1}$ (Left); (b) $B_{1}$ (Center); (c) $C_{1}$ (Right).  
    }
    \label{figure 3}
\end{figure}


\begin{lemma}\label{not real}
The matroids $M_{\text{Steiner}}(i)$ for $i\in [8]$ are not realizable.
\end{lemma}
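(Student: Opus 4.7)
The plan is to reduce the non-realizability of $M_{\textup{Steiner}}(i)$ to the classical non-realizability of the Fano plane over $\CC$, via an identification of $M_{\textup{Steiner}}\setminus i$ with the dual Fano matroid $F_7^*$. Since $M_{\textup{Steiner}}(i)$ is obtained from $M_{\textup{Steiner}}$ by declaring $i$ a loop, any realization must assign $\gamma_i = 0$ and restrict to a realization of $M_{\textup{Steiner}}\setminus i$, while conversely any realization of the latter extends by setting $\gamma_i = 0$. Hence realizability of $M_{\textup{Steiner}}(i)$ over $\CC$ is equivalent to that of $M_{\textup{Steiner}}\setminus i$. Moreover, as $M_{\textup{Steiner}}$ is the affine geometry matroid $AG(3,2)$, its automorphism group contains the affine group $\text{AGL}(3,\mathbb{F}_2)$, which acts transitively on the eight points. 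It therefore suffices to treat a single value, say $i = 8$.

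The heart of the argument lies in identifying $M_{\textup{Steiner}}\setminus 8$ with $F_7^*$. Since $8$ is not a coloop in $M_{\textup{Steiner}}$ (there exist bases of size four avoiding $8$), the deletion $M_{\textup{Steiner}}\setminus 8$ remains a paving matroid of rank four, now on $[7]$, whose four-element circuits are exactly the seven dependent hyperplanes of $M_{\textup{Steiner}}$ that avoid the point $8$:
\[
\{3,5,6,7\},\ \{1,4,6,7\},\ \{1,2,5,7\},\ \{1,2,3,6\},\ \{2,3,4,7\},\ \{1,3,4,5\},\ \{2,4,5,6\}.
\]
A direct inspection reveals that these are precisely the complements in $[7]$ of the seven lines of the Fano plane $\{\{1,2,4\},\{2,3,5\},\{3,4,6\},\{4,5,7\},\{1,5,6\},\{2,6,7\},\{1,3,7\}\}$. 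Since the circuits of $F_7^*$ are, by definition, the cocircuits of $F_7$, that is, the complements of the Fano lines, the two matroids share the same four-element circuits; together with the fact that both are paving matroids of rank four on $[7]$, this identification of circuits forces $M_{\textup{Steiner}}\setminus 8 \cong F_7^*$.

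It remains to invoke that the Fano plane is not representable over any field of characteristic different from two, and in particular not over $\CC$; since matroid duality preserves realizability over a fixed field, the dual matroid $F_7^*$ is also not realizable over $\CC$. Combined with the identification above, this shows that $M_{\textup{Steiner}}\setminus 8$, and hence $M_{\textup{Steiner}}(i)$ for every $i\in[8]$, is not realizable. The main verification is the combinatorial matching in the second paragraph between the seven anti-Fano four-subsets and the complements of the Fano lines; everything else reduces to standard applications of matroid duality and symmetry.
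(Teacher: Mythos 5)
Your proof is correct, but it proceeds by a genuinely different route than the paper's. The paper's argument for Lemma~\ref{not real} is a direct coordinate computation: it normalizes a putative realization of $M_{\textup{Steiner}}(5)$ to a $4\times 7$ matrix, uses the vanishing of the minors attached to dependent hyperplanes to force $x q r = 0$, and checks that each of $x=0$, $q=0$, $r=0$ contradicts the independence of some quadruple, then asserts the remaining indices follow analogously. You instead observe that a realization of $M_{\textup{Steiner}}(i)$ is precisely a realization of $M_{\textup{Steiner}}\setminus i$ together with the zero vector, reduce to $i=8$ via the transitive action of $\text{AGL}(3,\mathbb{F}_2) = \text{Aut}(AG(3,2))$, and then identify $M_{\textup{Steiner}}\setminus 8$ with $F_7^*$ by matching its seven $4$-circuits with the complements in $[7]$ of the seven Fano lines (both matroids are rank-four paving matroids on $[7]$, so the $4$-circuits determine them). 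Non-realizability over $\CC$ then follows from the classical fact that the Fano plane is representable only in characteristic two, together with the invariance of representability under duality. Your argument is more structural and slightly shorter if one is willing to cite these two standard facts; the paper's computation is more self-contained and does not presuppose them. The one fact you should verify explicitly, which you flag and which does check out, is the combinatorial match between the seven blocks of $S(3,4,8)$ avoiding $8$ and the complements of the Fano lines.
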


Observe that Lemma~\ref{not real} implies that the matroid $M_{\textup{Steiner}}$ is not realizable. Consequently, combining the above collection of maximal matroids together with Proposition~\ref{deco circ}, gives the following decomposition:

\begin{equation}\label{dec ste}
V_{\Ccal(M_{\textup{Steiner}})}=V_{\Ccal(U_{3,8})}\bigcup_{i=1}^{8}V_{\Ccal(M_{\textup{Steiner}}(i))}\bigcup_{j=1}^{14}V_{\Ccal(A_{j})}\bigcup_{k=1}^{8}V_{\Ccal(B_{k})}.
\end{equation}

The matroids $U_{3,8}$ and $A_{j}$ are nilpotent. Consequently, by Theorem~\ref{teo ir}, their matroid varieties coincide with their circuit varieties. On the other hand, each matroid $B_{k}$ is the direct sum of the trivial rank-one matroid on a single element and the Fano plane. Recall that the Fano plane, which we denote $M_{\textup{Fano}}$ and is depicted in Figure~\ref{fano}, is the point-line configuration from Example~\ref{example Fano}. 
In \cite{liwski2025minimal}, the irreducible decomposition of $V_{\Ccal(M_{\textup{Fano}})}$ was determined as follows:
\begin{equation}\label{dec fano}
V_{\Ccal(M_{\text{Fano}})}=V_{U_{2,7}}\bigcup_{i=1}^{7}V_{M_{\text{Fano}}(i)}\bigcup_{j=1}^{7}V_{A_{j}\rq}\bigcup_{k=1}^{7}V_{B_{k}\rq},\end{equation}
where the matroids $A_{j}\rq$ and $B_{k}\rq$ are defined in Example~\ref{example Fano} 
(see also 
Figure~\ref{figure 4}).

Equation~\eqref{dec fano} gives the irreducible decomposition of each $V_{\Ccal(B_{k})}$. Substituting this into Equation~\eqref{dec ste}, we have:
\begin{equation}\label{dec ste 2}
V_{\Ccal(M_{\textup{Steiner}})}=V_{U_{3,8}}\bigcup_{i=1}^{8}V_{\Ccal(M_{\textup{Steiner}}(i))}\bigcup_{j=1}^{14}V_{A_{j}}\bigcup_{k=1}^{56}V_{C_{k}}\bigcup_{l=1}^{28}V_{D_{l}}\bigcup_{r=1}^{8}V_{E_{r}}\bigcup_{s=1}^{56}V_{F_{s}},
\end{equation}
where the matroids $C_{k},D_{l},E_{r}$ and $F_{s}$ are defined as follows:

\begin{itemize}
\item {$C_{k}$ for $k\in [56]$:} These are the matroids obtained from $M_{\textup{Steiner}}$ by making one of its points a loop and another a coloop; see Figure~\ref{fig:9a}.
\item {$D_{l}$ for $l\in [28]$:} These are the matroids obtained from $M_{\textup{Steiner}}$ by making two of its points coloops; see Figure~\ref{fig:9b}.
\item {$E_{r}$ for $r\in [8]$:} These are the rank-three matroids obtained from $M_{\textup{Steiner}}$ by making one of its points a coloop, with the remaining points forming the uniform matroid $U_{2,7}$.
\item {$F_{s}$ for $s\in [56]$:} Consider the matroid obtained from $M_{\textup{Steiner}}$ by making the point $8$ a coloop and identifying the points $\{3,5,6,7\}$. The matroids $F_{s}$ are then those obtained by applying automorphisms of  $M_{\textup{Steiner}}$ to this matroid.
\end{itemize}

\begin{figure}[H]
    \centering
    \begin{subfigure}[b]{0pt}
  \phantomsubcaption
  \label{fig:9a}
\end{subfigure}%
\begin{subfigure}[b]{0pt}
  \phantomsubcaption
  \label{fig:9b}
\end{subfigure}%
\begin{subfigure}[b]{0pt}
  \phantomsubcaption
  \label{fig:9c}
\end{subfigure}
    \includegraphics[width=0.8\textwidth, trim=0 0 0 0, clip]{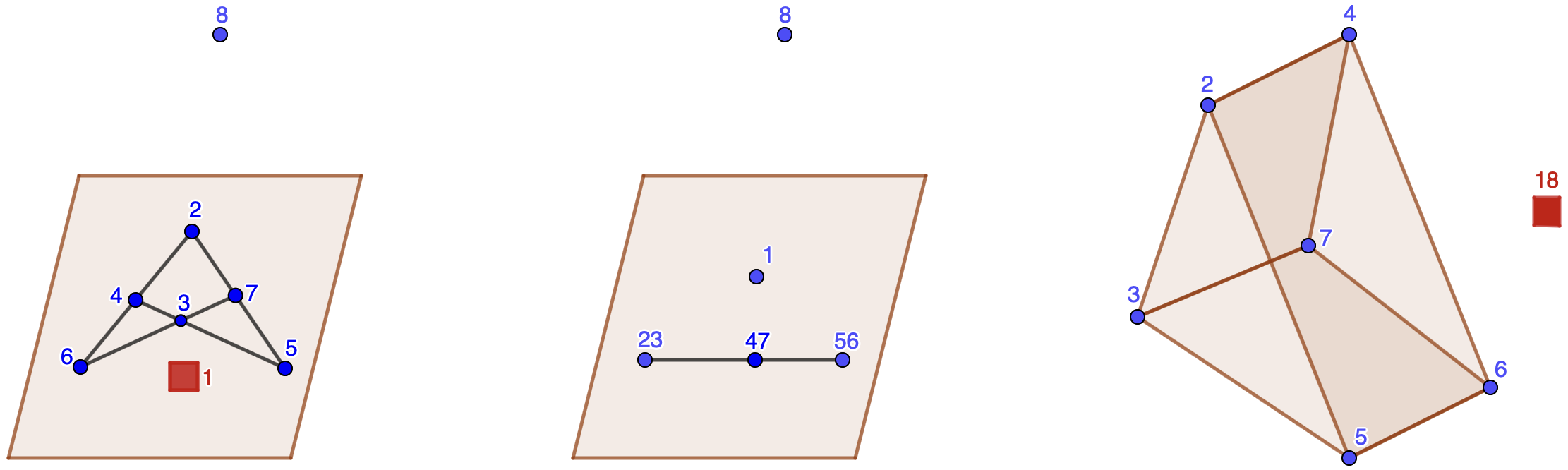}
    \caption{(a) Matroid $C_{1}$ (Left); (b) Matroid $D_{1}$ (Center); (c) Matroid $G_{1}$ (Right).}
    \label{figure 5}
\end{figure}

Observe that $E_{r}\leq U_{3,8}$ for each $r\in [8]$, and for each $s\in [56]$, there exists $j\in [14]$ with $F_{s}\leq A_{j}$. Hence, the varieties $V_{E_{r}}$ and $V_{F_{s}}$ are redundant in~\eqref{dec ste 2}, leading to:
\begin{equation}\label{dec ste 3}
V_{\Ccal(M_{\textup{Steiner}})}=V_{U_{3,8}}\bigcup_{i=1}^{8}V_{\Ccal(M_{\textup{Steiner}}(i))}\bigcup_{j=1}^{14}V_{A_{j}}\bigcup_{k=1}^{56}V_{C_{k}}\bigcup_{l=1}^{28}V_{D_{l}}.
\end{equation}
We also denote by $G_{p}$ for $p\in [28]$ the matroids obtained from $M_{\textup{Steiner}}$ by making two of its points into loops; see Figure~\ref{fig:9c}.  Under the notations above, we have:

\begin{proposition}
   The irreducible decomposition of the circuit variety of $M_{\textup{Steiner}}$ is 
\[V_{\Ccal(M_{\textup{Steiner}})}=V_{U_{3,8}}\bigcup_{j=1}^{14}V_{A_{j}}\bigcup_{k=1}^{56}V_{C_{k}}\bigcup_{l=1}^{28}V_{D_{l}}\bigcup_{p=1}^{28}V_{G_{p}}.\]
\end{proposition}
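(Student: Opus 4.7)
The plan is to refine equation~\eqref{dec ste 3} by further decomposing each of the eight varieties $V_{\Ccal(M_{\textup{Steiner}}(i))}$ appearing in it. Since each $M_{\textup{Steiner}}(i)$ is non-realizable by Lemma~\ref{not real}, Proposition~\ref{deco circ} yields
\[
    V_{\Ccal(M_{\textup{Steiner}}(i))} \;=\; \bigcup_{N \in \minabove{M_{\textup{Steiner}}(i)}} V_{\Ccal(N)},
\]
so it suffices to compute $\minabove{M_{\textup{Steiner}}(i)}$ for each $i$, and to show that each minimal extension either coincides with one of the matroids $G_p$ (two loops) or has a circuit variety subsumed by one of the components $V_{U_{3,8}}$, $V_{A_j}$, $V_{C_k}$, or $V_{D_l}$ already listed in~\eqref{dec ste 3}.

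First I would exploit the transitivity of $\text{Aut}(M_{\textup{Steiner}})$ on $[8]$ to reduce to a single case, say $i=8$. The stabilizer of $\{8\}$ still acts transitively on the remaining seven points, so the minimal matroid extensions partition into a small number of symmetry orbits, which I would enumerate by running Algorithm~\ref{algo 2} on $M_{\textup{Steiner}}(8)$. I expect three types of orbits: (a) extensions obtained by declaring a second point $j \in [7]$ to be a loop, which produce exactly the $28$ matroids $G_p$ indexed by pairs $\{i,j\} \subset [8]$; (b) extensions that impose a double point on some pair in $[7]$, together with the loop $\{8\}$, which after tracing the forced dependencies via Lemma~\ref{submo} reduce to matroids dominated by some $C_k$; (c) extensions that impose a new dependent $3$- or $4$-subset, which either collapse the rank and are dominated by $U_{3,8}$, or match the dependency pattern of some $A_j$ or $D_l$.

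For each non-$G_p$ orbit representative $N$, I would apply Algorithm~\ref{alg:algocompare5} to verify $N \geq N'$ for a suitable $N' \in \{U_{3,8}, A_j, C_k, D_l\}$, and hence $V_{\Ccal(N)} \subset V_{\Ccal(N')}$. Substituting these inclusions into~\eqref{dec ste 3} would give exactly the claimed decomposition. Irreducibility of each component then follows from Theorem~\ref{teo ir}(iii), since every matroid in the final list is inductively connected (or is the nilpotent uniform matroid $U_{3,8}$). Non-redundancy is verified by a final sweep with Algorithm~\ref{alg:algocompare5}, using the distinct combinatorial signatures of the listed matroids (their loop, coloop and double-point patterns, together with their cyclic flat structures) to conclude that no two of the associated matroid varieties are comparable.

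The main obstacle is the case analysis for orbits (b) and (c). Although Algorithm~\ref{algo 2} enumerates the minimal extensions mechanically, identifying precisely which dominating matroid in $\{A_j, C_k, D_l, U_{3,8}\}$ is reached by each non-$G_p$ extension, and certifying via Algorithm~\ref{alg:algocompare5} that no orbit contributes a genuinely new irreducible component, is combinatorially intensive. The high symmetry of $M_{\textup{Steiner}}$, in particular its $3$-transitive automorphism group on blocks, is the crucial ingredient that keeps the number of orbits manageable and makes the bookkeeping tractable.
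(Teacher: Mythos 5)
Your proposal follows the paper's own strategy: invoke Lemma~\ref{not real} and Proposition~\ref{deco circ} to replace each $V_{\Ccal(M_{\textup{Steiner}}(i))}$ in~\eqref{dec ste 3} by the circuit varieties of its minimal extensions (computed via Algorithm~\ref{algo 2}), observe that every such extension is dominated by a matroid already in the list $\{U_{3,8}, A_j, C_k, G_p\}$, then obtain irreducibility from Theorem~\ref{teo ir} (inductive connectedness) and non-redundancy from pairwise incomparability. The added automorphism-orbit bookkeeping you propose, and the slightly larger dominating set including the $D_l$, are harmless refinements, not a different route.
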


\begin{proof}
Each of the matroids $G_{p}$ satisfies the conditions of Theorem~\ref{teo ir} (ii), implying that
$V_{\Ccal(G_{p})}\subset V_{G_{p}}\cup V_{U_{3,8}}$.
Applying Algorithm~\ref{algo 2} to each matroid $M_{\textup{Steiner}}(i)$, we find that each of their maximal matroid degenerations is smaller than or equal to some matroid in $\{U_{3,8},A_{j},C_{k},G_{p}\}$. Using this, along with the non-realizability of each matroid $M_{\textup{Steiner}}(i)$, Equation~\eqref{dec ste 3} becomes:
\begin{equation}\label{dec ste 4}
V_{\Ccal(M_{\textup{Steiner}})}=V_{U_{3,8}}\bigcup_{j=1}^{14}V_{A_{j}}\bigcup_{k=1}^{56}V_{C_{k}}\bigcup_{l=1}^{28}V_{D_{l}}\bigcup_{p=1}^{28}V_{G_{p}}.
\end{equation}
Since all matroids in this decomposition are inductively connected, by Theorem~\ref{teo ir}, their corresponding varieties are irreducible. Furthermore, since no two of these matroids are comparable with respect to the weak order, 
the decomposition is non-redundant. Hence, this is the irreducible decomposition of $V_{\Ccal(M_{\textup{Steiner}})}$.
\end{proof}

\subsection{Fano dual}

To introduce the matroid $M_{\text{Fano}}^{\ast}$, which is the focus of this section, we first recall the notion of the dual of a matroid:

\begin{definition}
Let $M$ be a matroid on $[d]$. The \emph{dual matroid} $M^{\ast}$ is the matroid on $[d]$ whose independent sets are precisely those subsets of $[d]$ that are disjoint from some basis of $M$.
\end{definition}

Consider the dual of the {\bf Fano plane}, denoted $M_{\text{Fano}}^{\ast}$. This is the paving matroid of rank four on $[7]$, with the following set of dependent hyperplanes:
\[\{4,5,6,7\},\{2,3,5,6\},\{2,3,4,7\},\{1,3,5,7\},\{1,3,4,6\},\{1,2,4,5\},\{1,2,6,7\}.\]
Applying Algorithm~\ref{algo 2}, we obtain that the set 
$\minabove{\Dual}$ consists of the following matroids; see Figure~\ref{figure 6}, from left to right:
\begin{itemize}
\item[{\rm (i)}] The matroids $M_{\text{Fano}}^{\ast}(i)$ for $i\in [7]$.
\item[{\rm (ii)}] A matroid obtained by identifying 3 points outside a dependent hyperplane of $M_{\text{Fano}}^{\ast}$. 
\item[{\rm (iv)}] A matroid that is the direct sum of a quadrilateral set and the trivial rank-one matroid on a single element. 
\item[{\rm (i)}] The uniform matroid $U_{3,7}$.
\end{itemize}

\begin{figure}[H]
    \centering
    \begin{subfigure}[b]{0pt}
  \phantomsubcaption
  \label{fig:10a}
\end{subfigure}%
\begin{subfigure}[b]{0pt}
  \phantomsubcaption
  \label{fig:10b}
\end{subfigure}%
\begin{subfigure}[b]{0pt}
  \phantomsubcaption
  \label{fig:10c}
\end{subfigure}
\begin{subfigure}[b]{0pt}
  \phantomsubcaption
  \label{fig:10d}
\end{subfigure}
    \includegraphics[width=0.8\textwidth, trim=0 0 0 0, clip]{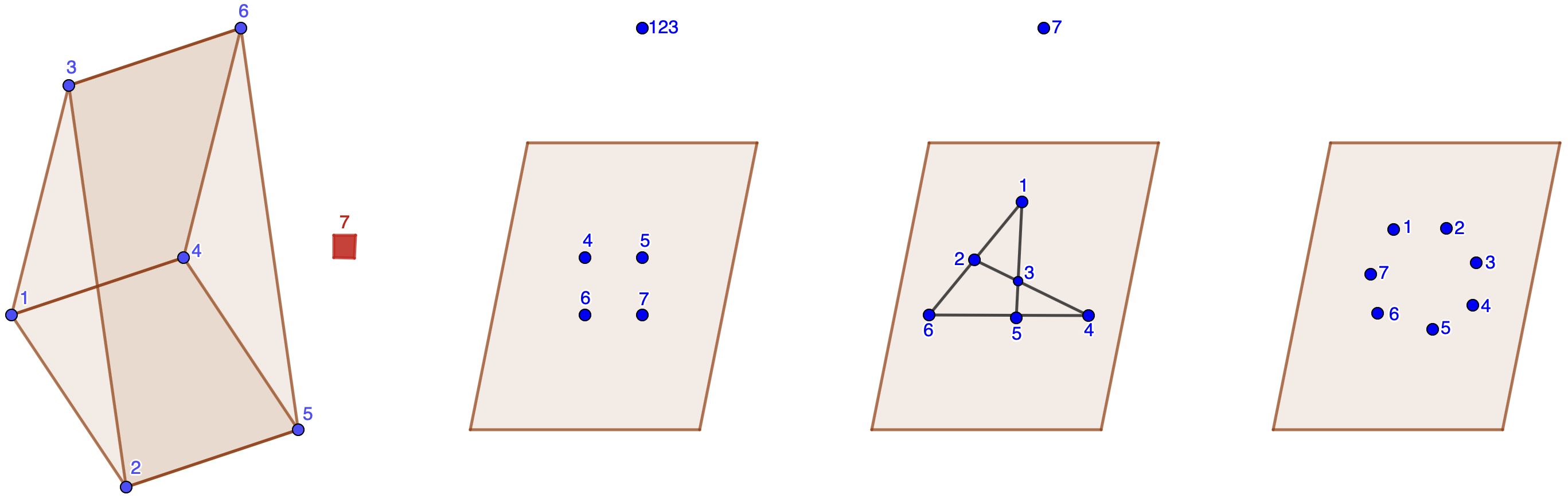}
    \caption{Maximal matroid degenerations  $\Dual(7),A_{1},B_{1},U_{3,7}$ of $\Dual$.}
    \label{figure 6}
\end{figure}

There are seven matroids of type (ii), each corresponding to a dependent hyperplane of $\Dual$, and seven matroids of third (iii), each determined by the choice of a coloop. We denote these matroids by
$A_{j}$ and $B_{k}$, 
for $j,k\in [7]$.  Under the notations above, we have:

\begin{proposition}
   The irreducible decomposition of the circuit variety of $\Dual$ is 
\[V_{\Ccal(\Dual)}=V_{U_{3,7}}\bigcup_{i=1}^{7}V_{\Dual(i)}\bigcup_{j=1}^{7}V_{A_{j}}\bigcup_{k=1}^{7}V_{B_{k}}.\]
\end{proposition}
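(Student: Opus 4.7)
The plan mirrors the strategies used for the V\'{a}mos matroid and $S(3,4,8)$. First, I would apply Proposition~\ref{deco circ}: since the Fano matroid is representable only in characteristic two, its dual $\Dual$ is not realizable over $\CC$, so $V_{\Dual} = \emptyset$, and combined with the enumeration of $\minabove{\Dual}$ already established by Algorithm~\ref{algo 2}, one obtains
\[
V_{\Ccal(\Dual)} = V_{\Ccal(U_{3,7})} \cup \bigcup_{i=1}^{7} V_{\Ccal(\Dual(i))} \cup \bigcup_{j=1}^{7} V_{\Ccal(A_j)} \cup \bigcup_{k=1}^{7} V_{\Ccal(B_k)}.
\]

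Next, I would analyze each circuit variety on the right-hand side in turn. The matroid $U_{3,7}$ is nilpotent, so Theorem~\ref{teo ir}(i) gives $V_{\Ccal(U_{3,7})} = V_{U_{3,7}}$. Each $A_j$ has a simple reduction that is a paving matroid whose proper submatroids are nilpotent, so Theorem~\ref{teo ir}(ii) yields $V_{\Ccal(A_j)} = V_{A_j} \cup V_{U_{3,7}}$, the uniform piece being absorbed by the first term on the right. Each $B_k$ is the direct sum of a quadrilateral set $\textup{QS}$ and a free rank-one element; by Example~\ref{ej quad}, any point of $V_{\Ccal(B_k)}$ either realizes $B_k$ itself or has its six $\textup{QS}$-points collapsed to a common plane, in which case all seven vectors lie in a rank-three subspace, giving $V_{\Ccal(B_k)} \subset V_{B_k} \cup V_{U_{3,7}}$. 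For each $\Dual(i)$, I would apply Algorithm~\ref{algo 2} once more and verify that every minimal extension is either non-realizable over $\CC$ or dominates in the dependency order one of the matroids $U_{3,7}, A_j, B_k$ already present; this forces $V_{\Ccal(\Dual(i))}$ to collapse to $V_{\Dual(i)}$ modulo the listed terms.

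Finally, I would establish irreducibility and non-redundancy. Each of $U_{3,7}, \Dual(i), A_j, B_k$ (or its simple reduction) is inductively connected and realizable, so Theorem~\ref{teo ir}(iii) guarantees that each matroid variety on the right-hand side is irreducible. Pairwise incomparability in the dependency order, verified via Algorithm~\ref{alg:algocompare5}, rules out containments among these components and yields the claimed irreducible decomposition.

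The main obstacle will be the recursive analysis of the $\Dual(i)$: unlike the Steiner case, where the analogous matroids $M_{\textup{Steiner}}(i)$ produced genuinely new irreducible components, here the statement claims that $V_{\Dual(i)}$ alone suffices. Verifying this requires carefully enumerating $\minabove{\Dual(i)}$ and, for each candidate, either invoking a non-realizability argument in the spirit of Lemma~\ref{not real} or showing via the redundancy techniques of Section~\ref{appen} that the associated circuit variety is already contained in $V_{U_{3,7}} \cup \bigcup_j V_{A_j} \cup \bigcup_k V_{B_k}$.
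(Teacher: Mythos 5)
Your proposal mirrors the paper's proof in its overall architecture (apply Proposition~\ref{deco circ} using non-realizability of $\Dual$ to get the first union; process each circuit variety via Theorem~\ref{teo ir} or Example~\ref{ej quad}; conclude irreducibility via inductive connectedness and non-redundancy via incomparability). However, there is one substantive divergence that makes your plan significantly more laborious than needed, and one minor misclassification.

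\textbf{The $\Dual(i)$ step.} You flag the analysis of $\Dual(i)$ as ``the main obstacle'' and propose to recurse via Algorithm~\ref{algo 2}, then dispose of every minimal extension of each $\Dual(i)$ by either a non-realizability argument \`a la Lemma~\ref{not real} or a containment argument from Section~\ref{appen}. The paper instead observes that (the simple reduction of) each $\Dual(i)$ is a paving matroid with all points of degree at most two, all of whose proper submatroids are nilpotent, so Theorem~\ref{teo ir}(ii) gives $V_{\Ccal(\Dual(i))} \subset V_{\Dual(i)} \cup V_{U_{3,7}}$ in one line. Concretely, deleting a point of $\Dual$ leaves a rank-4 paving matroid on six elements with exactly three dependent hyperplanes, every point of degree two, and removing any further point collapses it to at most one dependent hyperplane, hence nilpotent. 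Your recursive route could in principle be carried out, but it replaces a one-line application of a known theorem with an open-ended enumeration and case analysis, and you yourself present it only as a plan rather than a completed argument. The crucial missing observation is that this configuration is exactly of the type Theorem~\ref{teo ir}(ii) was built to handle, precisely because $\Dual(i)$ is realizable (in contrast to the $M_{\textup{Steiner}}(i)$), so no further recursion is needed.

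\textbf{The $A_j$ step.} You assert the $A_j$ satisfy the hypotheses of Theorem~\ref{teo ir}(ii); the paper states they are nilpotent and uses part~(i), giving $V_{\Ccal(A_j)} = V_{A_j}$ with no extra uniform term. Your version produces a superfluous $V_{U_{3,7}}$ (which is harmless since that piece is already present), but more importantly it rests on the unverified claim that every proper submatroid of $A_j$ is nilpotent — a condition you do not check and which is neither needed nor asserted by the paper. Part~(i) is the clean route here.

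Everything else — non-realizability of $\Dual$ via duality, the $U_{3,7}$ and $B_k$ cases, irreducibility via Theorem~\ref{teo ir}(iii), and non-redundancy via pairwise incomparability — matches the paper.
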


\begin{proof}
Using the above collection of maximal matroids together with Proposition~\ref{deco circ} and using the non-realizability of $\Dual$, we obtain the following decomposition:
\begin{equation}\label{equ du}
V_{\Ccal(\Dual)}=V_{\Ccal(U_{3,7})}\bigcup_{i=1}^{7}V_{\Ccal(\Dual(i))}\bigcup_{j=1}^{7}V_{\Ccal(A_{j})}\bigcup_{k=1}^{7}V_{\Ccal(B_{k})}.
\end{equation}
Observe that the matroids $U_{3,7}$ and $A_{j}$ are nilpotent. Consequently, by Theorem~\ref{teo ir} (i), their matroid varieties coincide with their circuit varieties. On the other hand, each matroid $\Dual(i)$ satisfies the conditions of Theorem~\ref{teo ir} (ii), implying that
$V_{\Ccal(\Dual(i))}\subset V_{\Dual(i)}\cup V_{U_{3,7}}$.
Furthermore, each matroid $B_{k}$ is the direct sum of the trivial rank-one matroid on a single element and the quadrilateral set $\text{QS}$ from Example~\ref{ej quad}. 
By Example~\ref{ej quad}, we have $V_{\Ccal(\text{QS})}=V_{\text{QS}}\cup V_{U_{2,7}}$,
which implies that
$V_{\Ccal(B_{k})}\subset V_{B_{k}}\cup V_{U_{3,7}}$,
for each $k\in [7]$. 
Using this, Equation~\eqref{equ du} becomes:
\begin{equation*}
V_{\Ccal(\Dual)}=V_{U_{3,7}}\bigcup_{i=1}^{7}V_{\Dual(i)}\bigcup_{j=1}^{7}V_{A_{j}}\bigcup_{k=1}^{7}V_{B_{k}}.
\end{equation*}
All matroids in this decomposition are inductively connected, and thus, by Theorem~\ref{teo ir}, their corresponding varieties are irreducible. Moreover, since no two matroids are comparable with respect to the weak order, 
the decomposition is non-redundant. Therefore, we conclude that this is the irreducible decomposition of $V_{\Ccal(\Dual)}$.
\end{proof}

 \subsection{Dual of $M(K_{3,3})$}\label{k33}

To introduce the matroid $M_{3,3}$, the focus of this section, we begin by recalling the notions of \emph{truncation} and \emph{erection}:

\begin{definition}
Let $M$ ba a matroid of rank $n$ on $[d]$.
\begin{itemize}
 \item The {\em truncation} of $M$ is the matroid of rank $n-1$ on $[d]$, whose independent sets are those of $M$ with size at most $n-1$.\vs
\item A matroid $N$ is called an {\em erection} of $M$, if $M$ is the truncation of $N$. Among all erections of $M$ there exists a unique matroid with the fewest dependent sets, known as the \emph{free erection} of $M$. For further details, see \cite{crapo1970erecting}.
\end{itemize}
\end{definition}

 Consider the graphic matroid $M(K_{3,3})$ associated with the bipartite graph $K_{3,3}$, and let $\K$ denote its dual. This matroid has rank four and contains the following  $3$-circuits:
 \begin{equation}\label{lines}
 \{1,2,3\},\{4,5,6\},\{7,8,9\},\{1,4,7\},\{2,5,8\},\{3,6,9\},
 \end{equation}
 as well as the following collection of $4$-circuits:
 \[\{2,3,4,7\},\{1,3,5,8\},\{1,2,6,9\},\{1,5,6,7\},\{2,4,6,8\},\{3,4,5,9\},\{1,4,8,9\},\{2,5,7,9\},\{3,6,7,8\}.\]
Alternatively, this matroid can be described as the free erection of the $3\times 3$ grid. The $3\times 3$ grid is the point-line configuration on $[9]$ with the set of lines as given in~\eqref{lines}. 
Applying Algorithm~\ref{algo 2}, we obtain that the set $\minabove{\K}$ consists of the following matroids:

\begin{itemize}
\item[{\rm (i)}] The truncation of $\K$, referred to as the $3\times 3$ grid, and denoted by $A$; see Figure~\ref{fig:11a}.
\item[{\rm (ii)}] The matroid obtained from $\K$ by identifying the points $\{5,6,8,9\}$ (see Figure~\ref{fig:11b}), along with all the matroids obtained from this by applying an automorphism of $\K$.
\item[{\rm (iii)}] The matroid obtained from $\K$ by identifying the pairs of points $\{1,4\},\{2,5\}$ and $\{3,6\}$ (see Figure~\ref{fig:11c}), and all the matroids obtained from this by applying an automorphism of~$\K$.
\item[{\rm (iv)}] The matroid obtained from $\K$ by identifying the pairs of points $\{2,3\}$ and $\{4,7\}$, where the points $\{2,3,4,5,6,7,8,9\}$ form a hyperplane (see Figure~\ref{fig:11d}), along with all matroids obtained from this construction through automorphisms of $\K$.
\item[{\rm (v)}] The matroids $\K(i)$ for $i\in [9]$.
\end{itemize}

\begin{figure}[H]
    \centering
       \begin{subfigure}[b]{0pt}
  \phantomsubcaption
  \label{fig:11a}
\end{subfigure}%
\begin{subfigure}[b]{0pt}
  \phantomsubcaption
  \label{fig:11b}
\end{subfigure}%
\begin{subfigure}[b]{0pt}
  \phantomsubcaption
  \label{fig:11c}
\end{subfigure}
\begin{subfigure}[b]{0pt}
  \phantomsubcaption
  \label{fig:11d}
\end{subfigure}
    \includegraphics[width=0.9\textwidth, trim=0 0 0 0, clip]{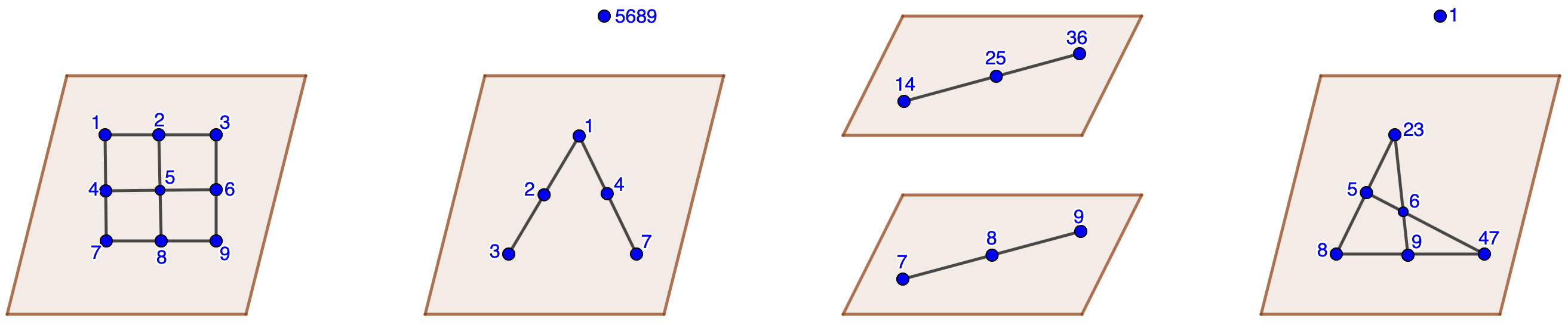}
    \caption{(a) Matroid $A$; (b) Matroid $B_{1}$; (c) Matroid $C_{1}$; (d) Matroid $D_{1}$.}  
    \label{figure 7}
\end{figure}

We denote the matroid of the first type as $A$.
There are $9$ matroids of the second type, $6$ matroids of the third type, and $9$ matroids of the fourth type. We label these matroids as $B_{i},C_{j}$ and $D_{k}$ for $j\in [6]$ and $i,k\in [9]$. 
In the following lemma, we will show how the matroid variety and the circuit variety of the above matroids are related.

\begin{lemma}\label{lema k}
The following statements hold:
\begin{itemize}
\item[{\rm (i)}] $V_{\Ccal(A)}=V_{A}$.
\item[{\rm (ii)}] For $i\in [9]$, $V_{\Ccal(B_{i})}=V_{B_{i}}\subset V_{\K}$.
\item[{\rm (iii)}] For $j\in [6]$, $V_{\Ccal(C_{j})}=V_{C_{j}}\subset V_{\K}$.
\item[{\rm (iv)}] For $k\in [9]$, $V_{\Ccal(D_{k})}\subset V_{\K}\cup V_{A}$.
\item[{\rm (v)}] For $l\in [9]$, the varieties $V_{\Ccal(\K(l))}$ are redundant in~\eqref{deco k}.
\end{itemize}
\end{lemma}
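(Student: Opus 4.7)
The plan is to treat the five items in turn, using Theorem~\ref{teo ir} to convert circuit varieties into matroid varieties whenever possible, specialization arguments for the required inclusions, and Algorithm~\ref{algo 2} for the most intricate cases.

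For (i), the grid $A$ is a rank-$3$ paving matroid in which every point has degree exactly $2$. First I would check that every proper submatroid is nilpotent: removing a single point collapses the two lines through it, leaving four degree-$1$ points, so restricting to the degree-$\geq 2$ subset produces a free rank-$3$ matroid on four elements, terminating the nilpotent chain. Theorem~\ref{teo ir}(ii) then yields $V_{\Ccal(A)} = V_A \cup V_{U_{2,9}}$. To conclude, I would establish $V_{U_{2,9}} \subset V_A$ by an explicit degeneration: starting from a generic $A$-realization, I slide its three row-lines continuously until they coincide on a single line, producing any prescribed rank-$\leq 2$ configuration as a limit of $A$-realizations.

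For (ii) and (iii), both $B_i$ and $C_j$ are obtained from $\K$ by identifying sets of $\K$-independent points, which raises the degrees enough that the nilpotent chain terminates after a few iterations. Theorem~\ref{teo ir}(i) then gives $V_{\Ccal(B_i)} = V_{B_i}$ and $V_{\Ccal(C_j)} = V_{C_j}$. The inclusions $V_{B_i}, V_{C_j} \subset V_\K$ follow from a specialization argument: every realization of $B_i$ (respectively $C_j$) arises as a limit of $\K$-realizations by continuously forcing the prescribed pairs of points to coincide, and $V_\K$ is Zariski-closed.

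For (iv), the plan is to apply Proposition~\ref{deco circ} to $D_k$ together with Algorithm~\ref{algo 2}, and then to analyze each configuration $\gamma \in V_{\Ccal(D_k)}$ according to where its eight hyperplane points sit. The $8$-point rank-$\leq 3$ hyperplane in $D_k$ forces at least eight of the nine vectors of $\gamma$ to lie in a common $3$-space. If all nine vectors lie in such a $3$-space, then $\gamma$ is rank-$\leq 3$ and fits the grid pattern, placing it in $V_A$ by a degeneration argument analogous to (i). Otherwise the remaining vector is in general position, and $\gamma$ is a limit of $\K$-realizations obtained by merging the prescribed pairs, hence lies in $V_\K$. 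Item (v) is handled similarly: a generic configuration of $V_{\Ccal(\K(l))}$ satisfies $\gamma_l = 0$ together with the dependencies of $\K \setminus \{l\}$, and is a Zariski limit of $\K$-realizations obtained by shrinking $\gamma_l$ to the origin, so already belongs to $V_\K$; to absorb possibly degenerate strata, I would run Algorithm~\ref{algo 2} on $\K(l)$ and check that every minimal extension lies above $\K$, above $A$, or above one of the previously treated $B_i, C_j, D_k$, so that Proposition~\ref{deco circ} gives the redundancy claim.

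The hard step will be (iv): enumerating the minimal extensions of $D_k$ via Algorithm~\ref{algo 2} and, in each case, producing explicit degenerations that realize the configuration as a specialization of either a $\K$- or an $A$-realization. The inclusion into $V_A$ for configurations whose nine vectors all collapse into a common $3$-space is the most delicate, since one must carefully recover the grid structure from the dependencies prescribed by $D_k$.
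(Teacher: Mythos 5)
Your proposal correctly identifies the two ingredients that the paper's proof uses — Theorem~\ref{teo ir} to collapse circuit varieties into matroid varieties, and specialization arguments for the containments into $V_{\K}$ and $V_{A}$ — but it treats the second ingredient as routine when it is in fact the entire content of the lemma. In general, $N \geq M$ does \emph{not} imply $V_N \subseteq V_M$; this is exactly Question~\ref{question} raised in Section~\ref{appen}, and it has a nontrivial answer. Saying that ``every realization of $B_i$ arises as a limit of $\K$-realizations by continuously forcing the prescribed pairs of points to coincide'' begs the question: what needs to be shown is that every $B_i$-realization can be \emph{unmerged}, i.e.\ perturbed to a configuration that realizes $\K$ (with all of $\K$'s independences intact), and this is not automatic. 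The paper resolves this by putting each $\Gamma_{B_i}$, $\Gamma_{C_j}$, $\Gamma_{D_k}$ into an explicit normal form using Theorem~\ref{poli}, and then producing an explicit one-parameter perturbation matrix (e.g.\ an $\epsilon$-deformation of the columns) that lands inside $\Gamma_{\K}$. Those explicit matrices are the core of the argument, and they are absent from your proposal.

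Two more points of divergence. For (i), the paper simply cites~\cite{Fatemeh3}, which already proves $V_{\Ccal(A)} = V_A$ for the $3\times 3$ grid; your route via Theorem~\ref{teo ir}(ii) requires you to additionally prove $V_{U_{2,9}} \subseteq V_A$, which is a genuinely nontrivial inclusion. The sliding degeneration you describe (collapsing the three horizontal lines onto a single line) does not obviously yield an arbitrary rank-$\leq 2$ configuration of nine points: as the lines merge, the column triples on each vertical line collapse to a single point, so the naive limit has only three distinct points. So your sketch would need substantial repair even though the stated inclusion is true. For (iv), the paper does not invoke Algorithm~\ref{algo 2} on $D_k$ at all; instead it observes that the restriction of $D_k$ to six of the nine points is isomorphic to the quadrilateral set, uses the known decomposition from Example~\ref{ej quad} to obtain $V_{\Ccal(D_k)} \subseteq V_{D_k} \cup V_A$, and then again gives an explicit $\epsilon$-perturbation to show $V_{D_k} \subseteq V_{\K}$. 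Your rank-dichotomy case analysis is in the right spirit, but the claim that ``the remaining vector is in general position, and $\gamma$ is a limit of $\K$-realizations'' requires exactly the kind of explicit verification you defer, and the claim that the rank-$\leq 3$ case ``fits the grid pattern'' is unjustified without showing that the restricted configuration satisfies the $3\times 3$ grid dependencies.
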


\begin{proposition}
    The irreducible decomposition of the circuit variety of $\K$ is $V_{\Ccal(\K)}=V_{\K}\cup V_{A}$, where $A$ is the $3\times 3$ grid in Figure~\ref{fig:11a}.
\end{proposition}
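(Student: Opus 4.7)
The plan is to start from the general decomposition given by Proposition~\ref{deco circ}, applied to $\K$, using the explicit enumeration of its $34$ minimal matroid extensions listed earlier in Subsection~\ref{k33}:
\begin{equation*}
\minabove{\K} = \{A\} \cup \{B_i\}_{i=1}^9 \cup \{C_j\}_{j=1}^6 \cup \{D_k\}_{k=1}^9 \cup \{\K(l)\}_{l=1}^9.
\end{equation*}
This yields
\begin{align*}
V_{\Ccal(\K)} = V_{\K} \cup V_{\Ccal(A)} &\cup \bigcup_{i=1}^9 V_{\Ccal(B_i)} \cup \bigcup_{j=1}^6 V_{\Ccal(C_j)} \\
&\cup \bigcup_{k=1}^9 V_{\Ccal(D_k)} \cup \bigcup_{l=1}^9 V_{\Ccal(\K(l))}.
\end{align*}

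Next, I would collapse this expression by invoking each part of Lemma~\ref{lema k} in turn. Part~(i) identifies $V_{\Ccal(A)}$ with $V_A$. Parts~(ii) and~(iii) absorb all the $V_{\Ccal(B_i)}$ and $V_{\Ccal(C_j)}$ into $V_{\K}$. Part~(iv) shows each $V_{\Ccal(D_k)} \subset V_{\K} \cup V_A$, so those terms contribute nothing new. Finally, part~(v) discards the loop extensions $V_{\Ccal(\K(l))}$ as redundant. The substitution yields the desired equality
\begin{equation*}
V_{\Ccal(\K)} = V_{\K} \cup V_A.
\end{equation*}

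To conclude that this is the \emph{irreducible} decomposition, it remains to check irreducibility of the two components and their non-redundancy. For irreducibility, I would verify that both $\K$ and $A$ are realizable and inductively connected, by explicitly exhibiting an admissible ordering of $[9]$ starting from a basis; then Theorem~\ref{teo ir}(iii) applies to both. For non-redundancy, the observation is that $A$ is the rank-$3$ truncation of $\K$, so $A > \K$ strictly: generic points of $V_{\K}$ span $\CC^4$ and therefore fail to lie in the proper linear subspace that every point of $V_A$ must inhabit, giving $V_{\K} \not\subset V_A$; conversely, every configuration in $V_A$ has all $4$-subsets dependent, a condition that fails at a generic point of $V_{\K}$, so $V_A \not\subset V_{\K}$.

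The main obstacle is the inductive connectedness check for $\K$, since this matroid carries $6$ three-circuits together with $9$ additional four-circuits and a rigid symmetry under its automorphism group, so a suitable ordering of the points must be identified so that each new point belongs to at most two subspaces of the already-built submatroid. Once such an ordering is exhibited (the same argument handling the simpler case of the $3\times 3$ grid $A$), all remaining verifications reduce to routine combinatorial checks, and the proposition follows.
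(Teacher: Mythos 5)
Your proposal follows the same route as the paper: you invoke Proposition~\ref{deco circ} with the explicit list of $34$ minimal extensions from Subsection~\ref{k33}, then collapse the resulting union using Lemma~\ref{lema k} (i)--(v), and finally apply Theorem~\ref{teo ir}(iii) to the two surviving inductively connected matroids. That much matches the paper's argument step for step.

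The one place where you add content beyond what the paper writes (the paper simply asserts non-redundancy is ``easy to see'') is also where there is a genuine logical gap. Your argument for $V_{\K}\not\subset V_A$ is fine in spirit: $V_A$ sits inside the determinantal variety of $4\times 9$ matrices of rank at most three, while a generic point of $V_{\K}$ has rank four, so $V_{\K}$ cannot be contained in $V_A$. (You should say ``determinantal locus of rank-$\leq 3$ matrices'' rather than ``proper linear subspace'' -- there is no single linear subspace containing all of $V_A$, since the column span varies from point to point.) The converse direction, however, does not follow from what you wrote. You observe that every configuration in $V_A$ has all $4$-subsets dependent and that this fails at a generic point of $V_{\K}$, and conclude $V_A\not\subset V_{\K}$. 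That inference is invalid: nothing in that observation prevents $V_A$ from being contained in the non-generic (lower-dimensional, closed) part of $V_{\K}$. Since $A$ is the truncation of $\K$, we have $A>\K$ in the dependency order, so the necessary condition for $V_A\subset V_{\K}$ from \textup{\cite[Lemma~7.2]{liwski2025minimal}} is actually satisfied; ruling out that containment requires a positive argument, for instance exhibiting a realization of $A$ that is not a limit of realizations of $\K$, or a dimension count showing $\dim V_A>\dim V_{\K}$. As written, that half of your non-redundancy check is a gap.
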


\begin{proof}
Using the above collection of maximal matroids together with Proposition~\ref{deco circ} and Lemma~\ref{lema k} (i), we obtain the following decomposition:
\begin{equation}\label{deco k}
V_{\Ccal(\K)}=V_{\K}\cup V_{A}\bigcup_{i=1}^{9}V_{\Ccal(B_{i})}\bigcup_{j=1}^{6}V_{\Ccal(C_{j})}\bigcup_{k=1}^{9}V_{\Ccal(D_{k})}\bigcup_{l=1}^{9}V_{\Ccal(\K(l)).}
\end{equation}
Using Lemma~\ref{lema k}, Equation~\eqref{deco k} becomes:
$V_{\Ccal(\K)}=V_{\K}\cup V_{A}$.
Both matroids in this decomposition are inductively connected. Thus, by Theorem~\ref{teo ir}, both matroid varieties are irreducible. Additionally, it is easy to see that the decomposition is non-redundant. Therefore, this is the irreducible decomposition of $V_{\Ccal(\K)}$.
\end{proof}

\section{Maximal matroid degenerations of Steiner systems}\label{furt}
In this section, we focus on matroids arising from Steiner systems and propose a conjecture concerning the structure of their maximal matroid degenerations. Our computations suggest that these degenerations follow a specific and regular pattern. Recall from Definition~\ref{steiner} that every Steiner system $S(n{-}1, k, d)$ gives rise to an $n$-paving matroid on the ground set $[d]$, with the blocks of the system corresponding to the dependent hyperplanes of the matroid.

\medskip

To formulate our conjecture, we begin with the following definition. 

\begin{definition}
Let $M$ be an $n$-paving matroid on $[d]$ associated with a Steiner system $S(n{-}1,k,d)$, denoted by $S$, and let $B$ denote its set of blocks. We define a family of matroids that are (strictly) less than $M$ in the 
weak order poset as follows.
\begin{itemize}
\item For each block $b \in B$, define $M_b$ to be the matroid of rank $n$ on $[d]$ obtained by collapsing (or identifying) all elements outside of $b$. More precisely, $M_b$ is the matroid whose set of circuits of size at most $n$ is given by 
$\textstyle{\binom{b}{n} \cup \binom{[d] \setminus b}{2}}$.\vs

\item For each element $i \in [d]$, let $B_i$ denote the set of blocks in $B$ that contain $i$. Then $\{b \setminus \{i\} : b \in B_i\}$ forms a Steiner system of type $S(n{-}2, k{-}1, d{-}1)$ on the ground set $[d] \setminus \{i\}$. Let $\widetilde{M_i}$ denote the $(n{-}1)$-paving matroid associated with this Steiner system. We define $M_i$ to be the direct sum
\[
M_i := \{i\} \oplus \widetilde{M_i},
\]
where $\{i\}$ is the rank-one matroid on the singleton $\{i\}$.
\end{itemize}
\end{definition}

\begin{example}
Consider the Steiner system $S(3,4,8)$ from \S\ref{uniq}. The matroids $M_b$ and $M_i$ for $b \in B$, $i \in [8]$ are shown in Figures~\ref{fig:8a} and~\ref{fig:8b}, respectively. 
\end{example}
\begin{example}\label{ex steiner}
We present two classical families of Steiner systems:
\begin{itemize}
\item A finite \emph{projective plane} $\mathbb{P}G(2,q)$ of order $q$ corresponds to a Steiner system of type $S(2, q+1, q^{2} + q + 1)$, where the blocks are the lines of the plane.
\item A finite \emph{affine plane} $\mathbb{A}G(2,q)$ of order $q$ corresponds to a Steiner system of type $S(2, q, q^{2})$. An affine plane of order $q$ can be obtained from a projective plane of the same order by deleting a single line along with all the points incident to it.
\end{itemize}
\end{example}

Applying our algorithm for computing maximal matroid degenerations to the family of matroids associated with projective and affine planes yields the following result.

\begin{theorem}
Consider the following spaces:
\begin{itemize}
\item The projective planes $\mathbb{P}G(2,2)$ and $\mathbb{P}G(2,3)$ of orders two and three. 
\item The affine planes $\mathbb{A}G(2,3)$ and $\mathbb{A}G(2,4)$ of orders three and four.
\end{itemize}
Let $S$ denote any of the associated Steiner systems, and let $B$ represent its set of blocks. Furthermore, let $M$ denote the matroid constructed from $S$ on the ground set $[d]$. Then, the set $\minabove{M}$ consists of the following collection of matroids:
\begin{itemize}
\item The matroids $M_{b}$ for $b \in B$.
\item The matroids $M_{i}$ for $i \in [d]$.
\item The matroids $M(i)$ for $i \in [d]$.
\item The uniform matroid $U_{2,d}$.
\end{itemize}
\end{theorem}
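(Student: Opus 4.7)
My plan is to prove the theorem in two stages: first, verify that each matroid in the four listed families lies in $\minabove{M}$; second, apply Algorithm~\ref{211} to rule out any other minimal extension. Since each Steiner system here produces a rank-three matroid, I use the general algorithm rather than the rank-four optimized Algorithm~\ref{algo 2}; the large automorphism groups of projective and affine planes (e.g.\ $\mathrm{PGL}_3(\mathbb{F}_q)$ and $\mathrm{AGL}_2(\mathbb{F}_q)$) reduce the case analysis to a small number of orbits, keeping the computation tractable.

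For the first stage, the inequality $N > M$ is immediate in each case: $M(i)$ carries a new loop, $U_{2,d}$ has strictly lower rank, $M_b$ contains the $2$-circuits $\binom{[d]\setminus b}{2}$, and $M_i$ carries the additional dependencies of the derived Steiner system on $[d]\setminus\{i\}$. Minimality of $M(i)$ is clear because adjoining a single loop to a simple matroid forces no other dependencies. Minimality of $M_b$ and $M_i$ requires more work: whenever $M < N \leq M_b$ and $N$ contains some new $2$-circuit $\{p,q\} \subset [d]\setminus b$, the Steiner axiom---every pair lies in a unique block---together with the propagation cases in Algorithm~\ref{algo 21} force the remaining elements of $[d]\setminus b$ to collapse to the same point, so $N = M_b$; an analogous argument via the blocks through $i$ handles $M_i$. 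Minimality of $U_{2,d}$ above $M$ is the most delicate and will be handled as a byproduct of the second-stage analysis, since $U_{2,d}$ has no $2$-circuits and so cannot lie above any $M_b$ or $M_i$.

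For the second stage, Lemma~\ref{lema} reduces the task to computing $\minabovehyp{\Delta_e}$ for each nonempty $e \notin \mathcal{D}(M)$ of size at most $3$. The group $\mathrm{Aut}(M)$ acts transitively on each relevant orbit---singletons, non-collinear pairs, and non-collinear triples of each configuration type---so only a handful of representatives must be processed. For each representative I run Algorithm~\ref{algo 21} and verify that its output consists entirely of matroids from the four families; the Python implementation referenced in the introduction performs this verification explicitly for the four Steiner systems.

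The main obstacle is the bookkeeping in the propagation analysis for $M_b$ and $M_i$: I must carefully track all branches of the submodularity-driven cases in Algorithm~\ref{algo 21} and confirm that each terminates in one of the four predicted families, rather than producing an unexpected intermediate minimal candidate. For $\mathbb{A}G(2,4)$ with its $16$ points and $20$ blocks, the branching is most substantial, but the high symmetry of $\mathrm{AGL}_2(\mathbb{F}_4)$ on the relevant configurations keeps the number of orbits of intermediate hypergraphs manageable, and the outcome conforms to the uniform four-family structure predicted by the conjecture of Section~\ref{furt}.
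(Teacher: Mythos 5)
The paper itself offers no analytic proof of this theorem: the justification given is simply ``Applying our algorithm for computing minimal matroid extensions $\ldots$ yields the following result,'' i.e.\ a computational verification using the implementation. Your proposal is ultimately of the same nature --- run Algorithm~\ref{algo 21} on orbit representatives and check the output --- so in that respect it follows the paper's route. Your remark that these matroids have rank three (not four) and hence require the generic algorithm rather than Algorithm~\ref{algo 2} is correct and worth noting, though the paper does not make this distinction explicit.

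There is, however, a genuine gap in your first stage. You claim that ``minimality of $M(i)$ is clear because adjoining a single loop to a simple matroid forces no other dependencies.'' That observation only shows that $M(i)$ is the \emph{minimum} element of the stratum $\ABCD{1}$, i.e.\ of the matroids above $M$ in which $i$ is a loop. It does not rule out a matroid $N$ with $M < N < M(i)$ in which $i$ is \emph{not} a loop. Such an $N$ can in principle exist: since $\mathcal{D}(M(i)) = \{S : i \in S\} \cup \mathcal{D}(M)$, any $N$ with all new dependencies containing $i$ but with $\{i\}$ still independent would satisfy $M < N < M(i)$. Ruling this out actually uses the Steiner structure: if one tries to create a new circuit $\{i,a,b\}$ with $\{a,b\}$ lying on a block $\ell$ of $M$, the circuit-elimination cascade with the (necessarily distinct) blocks through $i$ and each point of $\ell$ forces a collapse to a much larger dependency class (ultimately to one of the other listed families or to $U_{2,d}$), so no strictly intermediate $N$ survives. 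This is precisely the cross-stratum comparison that the paper's Algorithm~\ref{algo 2} (step~\ref{step:algmin4:3}) performs explicitly, and which your sketch omits. The analogous comparisons for $M_b$ and $M_i$ are also only gestured at; as written they do not constitute a proof independent of the algorithmic check. Since you do fall back on the algorithm for the second stage, the overall conclusion still holds, but the asserted ``clear'' shortcut for $M(i)$ is not clear and should be removed or replaced by the cross-stratum argument.
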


From this result, we propose the following conjecture, which  provides insights into the decomposition of circuit varieties of the affine or projective planes.

\begin{conjecture}
Let $M$ be a matroid constructed from an affine or projective plane. Then, the set $\minabove{M}$ consists precisely of the collection of matroids $\{M_{b}, M_{i}, M(i), U_{2,d}\}$.
\end{conjecture}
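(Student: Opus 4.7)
The plan is to adapt the stratification strategy of Section~\ref{abcd} to the rank-three setting, since any matroid $M$ built from a Steiner system $S(2,k,d)$ has rank three. I will partition $\{N:N>M\}$ into three strata $\mathcal{S}_1\sqcup\mathcal{S}_2\sqcup\mathcal{S}_3$, indexed by the size of the smallest new circuit---the stratum $\mathcal{S}_4$ is empty for rank three because every $4$-subset of $[d]$ is already dependent in $M$. Then I will identify the minimal elements of $\minabove{M}$ stratum by stratum and verify pairwise incomparability across strata. Stratum $\mathcal{S}_1$ immediately contributes $\{M(i):i\in[d]\}$, one minimal extension per choice of loop.

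The technical core is a \emph{cascade lemma} for projective and affine planes. Using Proposition~\ref{prop:submodularity} together with the two distinguishing structural features of these planes---the unique-line property and the fact that the lines through any fixed point cover all of $[d]$---I will show that declaring a single new double point $\{i,j\}$ or a single non-collinear triple $\{i,j,k\}$ dependent forces, via iterated submodularity on lines through $i$, $j$, and $k$, the closure of this set to expand to a rank-$2$ flat containing all of $[d]$. In the projective case this is immediate, since any two lines meet; in the affine case, parallel classes are handled by routing the cascade through intersecting lines and exploiting shared triples among ``big flats'' derived from non-parallel pairs. The upshot is that any such extension already contains $U_{2,d}$, and hence is not minimal in $\minabove{M}$ unless it \emph{is} $U_{2,d}$ itself.

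Consequently, the unique minimal element of $\mathcal{S}_3$ is $U_{2,d}$, while a rank-$3$ extension in $\mathcal{S}_2$ can avoid falling below $U_{2,d}$ only by carrying enough additional double points that the crucial intersections in the submodularity step have rank one rather than two. A case analysis of such \emph{blocking configurations}---driven by lines through a fixed point, respectively by lines disjoint from a fixed block---should show that the minimal blocking patterns are exactly $M_i$ (whose double points group $[d]\setminus\{i\}$ into the parallel classes $b\setminus\{i\}$ for each block $b\ni i$) and $M_b$ (whose double points collapse all of $[d]\setminus b$ into a single class); any other attempted blocking either dominates one of these or fails and falls back to $U_{2,d}$. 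Finally, I will verify that $M(i), M_i, M_b, U_{2,d}$ are pairwise incomparable, and that removing any defining dependency from $M_i$ or $M_b$ re-activates the cascade, confirming strict minimality.

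The main obstacle will be the classification of minimal blocking configurations in $\mathcal{S}_2$. While the unblocked cascade is essentially forced by submodularity, proving that $M_i$ and $M_b$ are the \emph{only} minimal blocking patterns for arbitrary order $q$ will require a combinatorial argument leveraging the high symmetry of $M$---in particular, the transitivity of $\text{Aut}(M)$ on incident point-line pairs in the desarguesian case---to reduce the analysis to finitely many orbits, together with a careful submodularity-based exclusion ruling out any further minimal pattern. The theorem stated just above the conjecture, verified via Algorithm~\ref{algo 2}, supplies a strong base case and heuristic evidence, but extending rigorously to all planes of arbitrary order is the core difficulty.
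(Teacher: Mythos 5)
The statement you have chosen is a \emph{conjecture} in the paper, not a theorem: the authors verify it computationally via Algorithm~\ref{algo 2} only for the four cases $\mathbb{P}G(2,2)$, $\mathbb{P}G(2,3)$, $\mathbb{A}G(2,3)$, $\mathbb{A}G(2,4)$, and explicitly leave the general statement open. There is therefore no proof in the paper to compare against, and what you have written is a strategy sketch for an open problem---you yourself flag that the core step, classifying the minimal blocking configurations in $\mathcal{S}_2$, is still to be done. The structural skeleton (stratify by smallest new circuit size, note $\mathcal{S}_4=\emptyset$ in rank three, read off $M(i)$ from $\mathcal{S}_1$, reduce $\mathcal{S}_3$ to $U_{2,d}$ by a submodularity cascade, attack $\mathcal{S}_2$) is a sensible adaptation of the rank-four machinery in Sections~\ref{abcd}--\ref{algorithm}, and it is consistent with the shape of the minimal extensions found for $S(3,4,8)$ and $\Dual$.

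Beyond the gap you acknowledge, there are two concrete problems. First, your ``cascade lemma'' is convincing for projective planes, where any two lines meet and hence iterated submodularity from one new rank-two triple really does propagate to the whole ground set; but in the affine case the line through $a,b$ has an entire parallel class of lines disjoint from it, and ``routing the cascade through intersecting lines'' is precisely where a proof must do nontrivial work, not a remark to defer. Second, and more seriously, the plan to classify minimal blocking configurations ``leveraging the transitivity of $\text{Aut}(M)$ on incident point-line pairs in the Desarguesian case'' does not cover the conjecture as stated: the conjecture quantifies over \emph{all} affine and projective planes, non-Desarguesian planes exist from order $9$ onward, and their automorphism groups need not be flag-transitive (some are quite small). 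An orbit-reduction argument that relies on $P\Gamma L(3,q)$-transitivity simply does not apply there, so you would either need to weaken the claim to Desarguesian planes or replace the symmetry argument with a purely combinatorial one using only the Steiner-system axioms (unique line through two points, constant block size, parallelism in the affine case). As written, the proposal is a reasonable attack plan, but it would not compile into a proof without closing these gaps.
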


\section{Appendix}\label{appen}

In this section, we develop methods for verifying redundancy among matroid varieties and provide proofs for the lemmas that were stated without proof in Section~\ref{examples}.

\subsection{Main tool for verifying redundancy}

In this subsection, we present the main tool for investigating the remaining  Question~\ref{question} that we recall below. 

\begin{questione}[\ref{question}]
Given two realizable matroids $M$ and $N$ on the same ground set, under what conditions does the inclusion $V_N \subseteq V_M$ hold between their associated varieties?
\end{questione}

A necessary condition for the inclusion $V_N \subseteq V_M$ to hold is that $N \leq M$, 
as established in~\textup{\cite[Lemma~7.2]{liwski2025minimal}}. To introduce the main tool we use in analyzing this question, we begin with the following definition.

\begin{definition}\normalfont
Let $M$ be a matroid of rank $n$ on $[d]$. 
The  projective realization space $\mathcal{R}(M)$ of $M$ is the set of all the tuples of points $\gamma=(\gamma_{1},\ldots,\gamma_{d})$ in $\mathbb{P}^{n-1}$ that satisfy:
\begin{equation*}
\{\gamma_{i_{1}},\ldots,\gamma_{i_{k}}\}\ \text{are linearly dependent} \Longleftrightarrow \{i_{1},\ldots,i_{k}\} \ \text{is dependent in $M$}.
\end{equation*}
The {\em moduli space} $\mathcal{M}(M)$ of $M$ is defined as the quotient of $\mathcal{R}(M)$ by the action of the projective general linear group $\text{PGL}_{n}(\CC)=\text{GL}_{n}(\CC)/\CC^{\ast}$. The {\em matroid stratum} $\text{Gr}(M,\CC)$ is defined as the quotient of $\Gamma_{M}$ by the action of the general linear group $\text{GL}_{n}(\CC)$.
\end{definition}

Suppose that $M$ contains a circuit of size $n+1$, which we may assume without loss of generality to be $\{1, \ldots, n+1\}$. Then, each isomorphism class in $\mathcal{M}(M)$ admits a unique representative $\gamma \in \mathcal{R}(M)$ satisfying the following condition:
\begin{equation}\label{ecua matr}(\gamma_{1},\gamma_{2},\ldots,\gamma_{n},\gamma_{n+1})=
(e_{1},e_{2},\ldots,e_{n},e_{1}+\ldots+e_{n}),
\end{equation}
where $\{e_{1},\ldots,e_{n}\}$ denotes the canonical basis of $\CC^{n}$. Consequently,  
$\mathcal{M}(M)$ can be characterized as the set of all tuples of points  $\gamma=(\gamma_{1},\ldots,\gamma_{d})$ in $\mathbb{P}^{n-1}$ such that:
\begin{center}
 Equation~\eqref{ecua matr} holds for $(\gamma_{1},\ldots,\gamma_{n+1})$,
\quad \text{and}\quad $\gamma \in \mathcal{R}(M)$.
\end{center}

To describe $\text{Gr}(M,\mathbb{C})$, we fix a reference basis $\lambda \in \mathcal{B}(M)$, which we assume, without loss of generality, to be $\{1, \ldots, n\}$. For each isomorphism class in $\text{Gr}(M,\mathbb{C})$, there exists a unique representative $\gamma \in \Gamma_M$ such that
$(\gamma_1, \ldots, \gamma_n) = (e_1, \ldots, e_n)$.
Thus, the matroid stratum $\text{Gr}(M,\mathbb{C})$ can be characterized as the set of all tuples of vectors $\gamma =(\gamma_1, \ldots, \gamma_d)$ in $\mathbb{C}^n$ satisfying the following conditions:
\begin{center} $(\gamma_1, \ldots, \gamma_n)= (e_1, \ldots, e_n)$ \quad\text{and}\quad
$\gamma \in \Gamma_M$.
\end{center}

We recall the following result from \cite{Fatemeh6}, which will be used in addressing Question~\ref{question}.

\begin{theorem}[\textup{\cite[Theorem~4.15]{Fatemeh6}}]\label{poli}
Let $M$ be an inductively connected matroid. Then, there exists $d\in \mathbb{N}$, and an open subset $U\subset \CC^{d}$ such that $\Gamma_{M}\cong U$.
\end{theorem}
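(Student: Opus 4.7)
The plan is to proceed by induction on $d-n$, the number of ground-set elements beyond a fixed basis. For the base case $d=n$, the matroid is just a rank-$n$ matroid on $n$ elements, so $\Gamma_M$ is $\text{GL}_n(\mathbb{C})$, which is an open (complement of the determinant hypersurface) subset of $\mathbb{C}^{n^2}$.

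For the inductive step, let $w=(j_1,\dotsc,j_d)$ be a witness of the inductive connectivity of $M$, and set $M' = M|\{j_1,\dotsc,j_{d-1}\}$. The truncated ordering $(j_1,\dotsc,j_{d-1})$ shows that $M'$ is again inductively connected, so by the inductive hypothesis $\Gamma_{M'} \cong U'$ for some open $U' \subset \mathbb{C}^{d'}$. I will study the forgetful morphism
\[ p : \Gamma_M \longrightarrow \Gamma_{M'},\qquad (\gamma_{j_1},\dotsc,\gamma_{j_d}) \longmapsto (\gamma_{j_1},\dotsc,\gamma_{j_{d-1}}), \]
and show that it is a Zariski-locally trivial fibration whose fiber is isomorphic to an open subset of some $\mathbb{C}^k$. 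Given such a trivialization $\Gamma_M \cong \Gamma_{M'} \times V$ with $V \subset \mathbb{C}^k$ open, the conclusion follows by combining with $\Gamma_{M'} \cong U'$, yielding $\Gamma_M$ open in $\mathbb{C}^{d'+k}$.

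The key point is that $\deg(j_d) \leq 2$ constrains $\gamma_{j_d}$ to lie in a linear subspace $W(u) \subset \mathbb{C}^n$ whose dimension is determined purely by the matroid. Concretely, in the rank-$n$ realization over a base point $u \in \Gamma_{M'}$:
\begin{itemize}
\item If $\deg(j_d)=0$, then $\gamma_{j_d}$ is unconstrained by dependencies of size $\leq n$, so $W(u) = \mathbb{C}^n$.
\item If $\deg(j_d)=1$ with $j_d$ in the subspace $l$ of rank $k$, then $W(u)$ is the $k$-dimensional linear span of $\{\gamma_i(u) : i \in \overline{l}\setminus\{j_d\}\}$; choosing a $k$-element independent subset $\{i_1,\dotsc,i_k\}$ of $\overline{l}$ (which exists since $\text{rk}(\overline{l})=k$), the vectors $\gamma_{i_1}(u),\dotsc,\gamma_{i_k}(u)$ form a basis of $W(u)$ varying polynomially in $u$.
\item If $\deg(j_d)=2$, then $W(u) = \overline{l_1}(u)\cap\overline{l_2}(u)$; one can likewise extract an algebraic basis from the intersection using the ranks of $l_1,l_2$ and $\overline{l_1}\cap\overline{l_2}$ prescribed by $M$.
\end{itemize}
The trivialization is then obtained by writing $\gamma_{j_d}(u,t) = \sum_\alpha t_\alpha b_\alpha(u)$ for the algebraic basis $b_\alpha(u)$ of $W(u)$. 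The additional constraints on $\gamma_{j_d}$ — namely that it must \emph{not} create unintended dependencies (i.e., lie in none of the finitely many proper flats it is supposed to avoid) — translate into nonvanishing of finitely many polynomials in $(u,t)$, hence carve out an open subset of the product $U' \times \mathbb{C}^k$.

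The main obstacle will be the rigorous construction of an \emph{algebraic} basis of $W(u)$ in the degree-two case, since intersections of linear subspaces can jump in dimension on closed subloci. The inductive connectivity hypothesis, together with submodularity of the rank function (Proposition~\ref{prop:submodularity}), should guarantee that the dimension is constant on $\Gamma_{M'}$, and one can then produce such a basis either by determinantal Cramer-type formulas or by identifying, within the data of the matroid $M$, a subset of ground elements whose realizations span the intersection for every $u$. A secondary but routine obstacle is checking surjectivity of $p$: given $u \in \Gamma_{M'}$, one must verify that some valid $\gamma_{j_d}$ exists, equivalently that the open conditions cut out on $W(u)$ are not vacuous, which follows from $W(u)$ being of strictly positive dimension.
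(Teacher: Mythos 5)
The paper itself does not prove this statement: it is quoted from \cite{Fatemeh6} (Theorem~4.15 there), and the only internal hint at its proof is the reference to Procedure~1 of that work, which constructs a realization one point at a time along the inductively connected ordering. Your induction on the ordering $w$, via the forgetful map $p\colon \Gamma_{M}\to\Gamma_{M'}$ and a parametrization of the admissible positions of $\gamma_{j_d}$, is exactly that strategy, so the route is the right one; but as written it has two genuine gaps.

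First, the degree-two case. Constancy of $\dim\bigl(\operatorname{span}(\gamma_{A_1})\cap\operatorname{span}(\gamma_{A_2})\bigr)$ over $\Gamma_{M'}$, where $A_i=\overline{l_i}\setminus\{j_d\}$, is automatic: each of $\dim\operatorname{span}(\gamma_{A_1})$, $\dim\operatorname{span}(\gamma_{A_2})$, $\dim\operatorname{span}(\gamma_{A_1\cup A_2})$ equals a rank of $M'$, so the intersection has dimension $\rank(A_1)+\rank(A_2)-\rank(A_1\cup A_2)$ at every $u$ (submodularity is not the point). But constant dimension only gives a kernel subbundle, which need not be globally trivial, and your fallback of finding ground elements spanning the intersection fails in general: the intersection of two flats of $M'$ need not contain any point of the configuration (e.g.\ two lines of a rank-three configuration meeting outside it). What closes the gap, and must be stated, is that on $\Gamma_{M'}$ every minor indexed by an independent set of $M'$ is nonzero at \emph{every} point: choosing independent spanning subsets $I_1\subset A_1$ and $I_2\subset A_2$, the column matroid of $[\gamma_{I_1}\mid-\gamma_{I_2}]$ is determined by $M'$ alone, so one fixed choice of pivot columns yields a single Cramer chart, regular on all of $\Gamma_{M'}$, hence a global algebraic frame of the kernel and of $W(u)$; only with this do you land in one open subset of $\CC^{d'+k}$ rather than a merely locally trivial object. (Relatedly, the intermediate claim ``$\Gamma_M\cong\Gamma_{M'}\times V$'' is not what you prove; the correct conclusion is the one you state afterwards, namely that $\Gamma_M$ is isomorphic to an open subset of $U'\times\CC^{k}$.) Second, your surjectivity remark is false as stated: $W(u)$ having positive dimension does not prevent it from being contained in $\operatorname{span}(\gamma_S)$ for some $S$ with $S\cup\{j_d\}$ independent in $M$, in which case the fibre over $u$ is empty. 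Fortunately surjectivity of $p$ is not needed: the theorem only asserts $\Gamma_M\cong U$ with $U$ open (possibly empty, or missing part of $U'$ --- note that realizability is a separate hypothesis in Theorem~\ref{teo ir}(iii)), so you should delete that argument rather than rely on it.
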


Moreover, by applying Procedure~1 from \cite{Fatemeh6}, we can explicitly determine the polynomials defining the open set $U$ in Theorem~\ref{poli}. This procedure can also be adapted to describe $\mathcal{M}(M)$ and $\text{Gr}(M, \CC)$.

\begin{example}\label{exa}
Consider the paving matroid $M$ of rank four on $[8]$ with the set of dependent hyperplanes:
$\{\{1,2,3,4\},\{3,4,5,6\},\{5,6,7,8\},\{1,2,7,8\}\}$.
The permutation $w=(1,3,5,7,2,4,6,8)$ satisfies the conditions of Definition~\ref{induc}, verifying that $M$ is inductively connected. 
Following \textup{\cite[Procedure~(1)]{Fatemeh6}}, we find that the moduli space $\mathcal{M}(M)$ is composed by all matrices of the form:
\begin{equation}\label{matri}
\begin{pmatrix}
1 & 0 & 0 & 0 & 1 & x & x & xz^{-1}\\
0 & 1 & 0 & 0 & 1 & y & z & 1\\
0 & 0 & 1 & 0 & 1 & 1 & w & 1\\
0 & 0 & 0 & 1 & 1 & 1 & 1 & s
\end{pmatrix},
\end{equation}
where the columns correspond to the points $(1,3,5,7,2,4,6,8)$, respectively, and all minors associated with bases are nonzero.
\end{example}

\begin{remark}\label{rem}
Theorem~\ref{poli} can be utilized to describe the moduli spaces of more general classes of matroids. For instance, consider the matroid $A$ from Subsection~\ref{Vamos}, which is a paving matroid of rank four on $[8]$ with set of dependent hyperplanes:
\[\{\{1,2,3,4\},\{3,4,5,6\},\{5,6,7,8\},\{7,8,1,2\},\{3,4,7,8\},\{1,2,5,6\}\}.\]
This is the matroid obtained from the matroid $M$ in Example~\ref{exa} by adding the dependencies $\{3,4,7,8\}$ and $\{1,2,5,6\}$. To describe $\mathcal{M}(A)$, we note that these additional dependencies impose conditions on the minors of the matrix in~\eqref{matri}, specifically requiring them to vanish for the specified quadruples. Imposing this vanishing condition is equivalent to setting $z=1$. As a result, the moduli space $\mathcal{M}(A)$ consists of all matrices of the form:
\begin{equation*}
\begin{pmatrix}
1 & 0 & 0 & 0 & 1 & x & x & x\\
0 & 1 & 0 & 0 & 1 & y & 1 & 1\\
0 & 0 & 1 & 0 & 1 & 1 & w & 1\\
0 & 0 & 0 & 1 & 1 & 1 & 1 & s
\end{pmatrix},
\end{equation*}
where the columns correspond to the points $(1,3,5,7,2,4,6,8)$, respectively, and all minors associated with bases are nonzero.
\end{remark}

In the next section, we will implicitly apply the same approach as in this remark to derive explicit descriptions of similar moduli spaces.

\subsection{Proofs of lemmas from Section~\ref{examples}}

This subsection presents the proofs of the lemmas from Section~\ref{examples}. In all these proofs we will use the notion of an {\em infinitesimal motion}, which we will now define.

\begin{definition} 
An {\em infinitesimal motion} refers to a perturbation that can be made arbitrarily small. Typically, we aim to show that a given element $x$ lies in the closure of a set $S$. Rather than explicitly stating that, for every $\epsilon>0$, there exists a perturbation of $x$ of distance at most $\epsilon$ landing in $S$, we will simply say that an infinitesimal motion (or infinitesimal perturbation) can be applied to $x$ to obtain an element of $S$.
\end{definition}

\begin{proof}[{\bf Proof of Lemma~\ref{lema red}.}]
By Remark~\ref{rem}, the set 
$\mathcal{M}(A)$ consists of all matrices of the form:
\begin{equation}\label{matrix 5}
\begin{pmatrix}
1 & 0 & 0 & 0 & 1 & x & x & x\\
0 & 1 & 0 & 0 & 1 & y & 1 & 1\\
0 & 0 & 1 & 0 & 1 & 1 & w & 1\\
0 & 0 & 0 & 1 & 1 & 1 & 1 & s
\end{pmatrix},
\end{equation}
where the columns correspond to the points $(1,3,5,7,2,4,6,8)$, respectively, and all minors associated with bases are nonzero.

(i) To prove the claim, we show that $V_{\Ccal(E_{2})}\subset V_{A}\cup V_{\Ccal(C_{2})}$. The argument for the matroid $E_{1}$ follows analogously. Consider the submatroid $E_{2}\rq$ of $E_{2}$ on $\{1,2,3,4,7,8\}$, with the dependent hyperplanes
$\{1,2,3,4\}$, $\{3,4,7,8\}$ and $\{1,2,7,8\}$.
Since $E_{2}\rq$ satisfies the conditions of Theorem~\ref{teo ir} (ii), we have $V_{\Ccal(E_{2}\rq)}=V_{E_{2}\rq}\cup V_{U_{3,6}}$, and so  $V_{\Ccal(E_{2})}\subset V_{E_{2}}\cup V_{\Ccal(C_{2})}$.

To prove the claim, it suffices to show that $V_{E_{2}}\subset V_{A}$. We will show that for any $\gamma \in V_{E_{2}}$, its vectors can be infinitesimally perturbed to obtain $\widetilde{\gamma}\in V_{A}$. By applying Theorem~\ref{poli}, we find that $\mathcal{M}(E_{2})$ consists of all matrices of the form:

\begin{equation*}\label{matrix 4}
\begin{pmatrix}
1 & 0 & 0 & 0 & 1 & x & 0 & x\\
0 & 1 & 0 & 0 & 1 & y & 0 & 1\\
0 & 0 & 1 & 0 & 1 & 1 & 1 & 1\\
0 & 0 & 0 & 1 & 1 & 1 & 0 & s
\end{pmatrix},
\end{equation*}
where the columns correspond to the points $(1,3,5,7,2,4,6,8)$, respectively, and all minors associated with bases are nonzero. Thus, by applying a projective transformation, we may assume that $\gamma$ is of this form. By choosing $\epsilon$ infinitesimally close to $0$ and perturbing the vector $\gamma_{6}=(0,0,1,0)$ to $(\epsilon x, \epsilon, 1, \epsilon)$, we obtain a tuple of vectors $\widetilde{\gamma}$ as in~\eqref{matrix 5}, which represents a realization of $A$. 

\smallskip
(ii) To show the inclusion in~\eqref{fi}, we will show that
\begin{equation}\label{f1}
V_{\Ccal(F_{1})}\subset V_{A}\cup V_{\Ccal(E_{1})},
\end{equation} 
where $F_{1}$ is the rank-four matroid in Figure~\ref{figure 2} (Right), characterized by the following circuits of sizes three and four:
\[\Ccal_{3}(F_{1})=\{\{1,3,4\},\{1,5,6\},\{1,7,8\}\},\quad \text{and} \quad \Ccal_{4}(F_{1})=\{\{5,6,7,8\},\{3,4,5,6\},\{3,4,7,8\}\}.\] It is sufficient to establish the claim for $F_{1}$ since the argument extends analogously to the remaining matroids $F_{i}$. To prove the inclusion in~\eqref{f1}, we show that any $\gamma \in V_{\Ccal(F_{1})}$ lies in the union on the right-hand side of the equation. We consider the following cases:

\smallskip
{\bf Case~1.} Suppose $\gamma_{1}=0$. Then $\{\gamma_{1},\gamma_{2}\}$ is dependent, implying $\gamma \in V_{\Ccal(E_{1})}$.

\smallskip
{\bf Case~2.} Suppose $\gamma_{1}\neq 0$. In this case, we observe that the vectors in $\gamma$ can be infinitesimally perturbed such that the set  $\{\gamma_{1},\gamma_{3},\gamma_{5},\gamma_{7},\gamma_{2}\}$ forms a frame in $\CC^{4}$ while preserving the tuple of vectors in $V_{\Ccal(F_{1})}$. Therefore, we may assume without loss of generality that this set of vectors constitutes a frame in $\CC^{4}$. By applying a suitable projective transformation, we can assume that $\gamma$ is of the form:

\begin{equation*}\label{matris}
\begin{pmatrix}
1 & 0 & 0 & 0 & 1 & x_{1} & x_{3} & x_{5}\\
0 & 1 & 0 & 0 & 1 & x_{2} & 0 & 0\\
0 & 0 & 1 & 0 & 1 & 0 & x_{4} & 0\\
0 & 0 & 0 & 1 & 1 & 0 & 0 & x_{6}
\end{pmatrix},
\end{equation*}
where the columns correspond to the points $(1,3,5,7,2,4,6,8)$, respectively. Moreover, by applying a small perturbation to the values $x_{i}$ we can assume that $\gamma$ realizes $F_{1}$. Consider values $\epsilon_{1},\epsilon_{2},\epsilon_{3}$ infinitesimally close to zero, chosen such that
\[\frac{\epsilon_{1}}{x_{1}}=\frac{\epsilon_{2}}{x_{3}}=\frac{\epsilon_{3}}{x_{5}},\]
and denote this common value by $\epsilon$. Using these parameters, we perturb the vectors of $\gamma$ to obtain a new tuple of vectors $\widetilde{\gamma}$, represented by the following matrix:
\begin{equation}\label{matris 2}
\begin{pmatrix}
1 & 0 & 0 & 0 & 1 & x_{1} & x_{3} & x_{5}\\
0 & 1 & 0 & 0 & 1 & x_{2} & \epsilon_{2} & \epsilon_{3}\\
0 & 0 & 1 & 0 & 1 & \epsilon_{1} & x_{4} & \epsilon_{3}\\
0 & 0 & 0 & 1 & 1 & \epsilon_{1} & \epsilon_{2} & x_{6}
\end{pmatrix},
\end{equation}
where the columns correspond to the points $(1,3,5,7,2,4,6,8)$. This collection represents an infinitesimal perturbation of $\gamma$. Rescaling the last three columns of the matrix in~\eqref{matris 2} by the scalars $\epsilon_{1}^{-1},\epsilon_{2}^{-1},\epsilon_{3}^{-1}$, we obtain a matrix that realizes the same matroid as $\widetilde{\gamma}$:
\begin{equation*}\label{matris 3}
\begin{pmatrix}
1 & 0 & 0 & 0 & 1 & \epsilon^{-1} & \epsilon^{-1} & \epsilon^{-1}\\
0 & 1 & 0 & 0 & 1 & x_{2}\epsilon_{1}^{-1} & 1 & 1\\
0 & 0 & 1 & 0 & 1 & 1 & x_{4}\epsilon_{2}^{-1} & 1\\
0 & 0 & 0 & 1 & 1 & 1 & 1 & x_{6}\epsilon_{3}^{-1}
\end{pmatrix},
\end{equation*}
where the columns correspond to the points $(1,3,5,7,2,4,6,8)$, respectively. This matrix has the same structure as the one in \eqref{matrix 5}, hence representing a realization of $A$. Thus, 
 $\widetilde{\gamma}\in \Gamma_{A}$. Since $\widetilde{\gamma}$ represents an infinitesimal motion of $\gamma$, it follows that $\gamma\in V_{A}$.

\smallskip
(iii) To prove the inclusion in~\eqref{bi}, it is sufficient to prove that
\begin{equation}\label{bi 2}
V_{\Ccal(B_{2})}\subset V_{A}\bigcup_{j=1}^{2}V_{\Ccal(C_{j})}\bigcup_{k=1}^{2}V_{\Ccal(D_{k})}\bigcup_{l=1}^{2}V_{\Ccal(E_{l})}\bigcup_{r=1}^{8}V_{\Ccal(F_{r})}.
\end{equation}
The argument for $B_{1}$ follows analogously, so proving the claim for $B_{2}$ will suffice. To establish the inclusion in~\eqref{bi 2}, we show that any $\gamma\in V_{\Ccal(B_{2})}$ lies in the union on the right-hand side of this equation. The proof is divided into the following cases:

\smallskip
{\bf Case~1.} Suppose that among the pairs of vectors $\{\gamma_{1},\gamma_{2}\},\{\gamma_{5},\gamma_{6}\},\{\gamma_{3},\gamma_{4}\},\{\gamma_{7},\gamma_{8}\}$, at least one is dependent. In such cases, we have
$\textstyle{\gamma\in \bigcup_{i=1}^{2}V_{\Ccal(D_{i})}\bigcup_{j=1}^{2}V_{\Ccal(E_{j})}}$,
as desired. 

\smallskip
{\bf Case~2.} Suppose one of the following conditions holds:
\[\rank\{\gamma_{1},\gamma_{2},\gamma_{5},\gamma_{6},\gamma_{7},\gamma_{8},\gamma_{3}\}\leq 3, \quad \text{or} \quad \rank\{\gamma_{1},\gamma_{2},\gamma_{5},\gamma_{6},\gamma_{7},\gamma_{8},\gamma_{4}\}\leq 3.\]
We may assume without loss of generality that the first condition is satisfied.

\smallskip
{\bf Case~2.1.} Suppose the following triples of vectors have rank at most two: 
\begin{equation}\label{triples}
\{\gamma_{3},\gamma_{1},\gamma_{2}\},\{\gamma_{3},\gamma_{5},\gamma_{6}\}, \{\gamma_{3},\gamma_{7},\gamma_{8}\}.
\end{equation}
 In this case, the matroid associated with $\gamma$ is smaller than or equal to $F$, where $F$ is the rank-four matroid defined by the following circuits of sizes three and four:
\[\Ccal_{3}(F)=\{\{3,1,2\},\{3,5,6\},\{3,7,8\}\},\quad \text{and} \quad \Ccal_{4}(F)=\{\{5,6,7,8\},\{1,2,5,6\},\{1,2,7,8\}\}.\]
Since $\gamma\in V_{\Ccal(F)}$ and $F$ is one of the matroids $F_{i}$, $\gamma$ belongs to 
 the right-hand side of~\eqref{bi 2}.

\smallskip
{\bf Case~2.2.} Suppose at least one of the triples of vectors in~\eqref{triples} has rank three. Without loss of generality, assume this triple is $\{\gamma_{1},\gamma_{2},\gamma_{3}\}$. Since $\rank\{\gamma_{1},\gamma_{2},\gamma_{3}\}=3$ and $\{\gamma_{1},\gamma_{2},\gamma_{3},\gamma_{4}\}$ is dependent, it follows that $\rank \{\gamma_{i}:i\in [8]\}\leq 3$. Consequently, 
$\gamma\in V_{U_{3,8}}$, implying that $\gamma$ belongs to the union on the right-hand side of~\eqref{bi 2}.

\smallskip
{\bf Case~3.} If 
neither of the previous cases applies, then the vectors $\{\gamma_{1},\gamma_{2},\gamma_{5},\gamma_{6},\gamma_{7},\gamma_{8}\}$ span a hyperplane $H\subset \CC^{3}$ with $\gamma_{3},\gamma_{4}\not \in H$. Additionally, the quadruples of vectors \[\{\gamma_{3},\gamma_{4},\gamma_{1},\gamma_{2}\},\quad \{\gamma_{3},\gamma_{4},\gamma_{5},\gamma_{6}\},\quad \{\gamma_{3},\gamma_{4},\gamma_{7},\gamma_{8}\}\] span hyperplanes
$H_{1},H_{2},H_{3}\neq H$ in $\CC^{4}$, respectively. To prove that $\gamma$ lies in the right-hand side of~\eqref{bi 2}, we will show that $\gamma \in V_{A}$. To do so, we will see that $\gamma$ can be infinitesimally perturbed to produce a tuple of vectors that realizes $A$.

\smallskip
{\bf Claim.} The vectors of $\gamma$ can be infinitesimally perturbed to produce $\widetilde{\gamma}\in V_{\Ccal(B_{2})}$, where $\{\widetilde{\gamma}_{3},\widetilde{\gamma}_{4},\widetilde{\gamma}_{1},\widetilde{\gamma}_{5},\widetilde{\gamma}_{7}\}$ forms a frame of $\CC^{4}$.

\smallskip
To prove the claim, note that since $\gamma_{3}, \gamma_{4} \notin H$, we may infinitesimally perturb the vectors $\gamma_{1}, \gamma_{5}, \gamma_{7}$ within $H$ to obtain  $\widetilde{\gamma}_{1},\widetilde{\gamma}_{5},\widetilde{\gamma}_{7}$ such that $\{\gamma_{3},\gamma_{4},\widetilde{\gamma}_{1},\widetilde{\gamma}_{5},\widetilde{\gamma}_{7}\}$ forms a frame of $\CC^{4}$. Let $H_{1}\rq,H_{2}\rq,H_{3}\rq $ denote the hyperplanes spanned by the perturbed triples
\[\{\gamma_{3},\gamma_{4},\widetilde{\gamma}_{1}\},\{\gamma_{3},\gamma_{4},\widetilde{\gamma_{5}}\},\{\gamma_{3},\gamma_{4},\widetilde{\gamma_{7}}\},\]
which represent an infinitesimal motion of $H_{1},H_{2},H_{3}$, respectively. The vectors of $\widetilde{\gamma}$ are completed as follows:
\begin{itemize}
\item $(\widetilde{\gamma}_{3},\widetilde{\gamma}_{4})=(\gamma_{3},\gamma_{4})$.
\item The vectors $\widetilde{\gamma}_{2},\widetilde{\gamma}_{6},\widetilde{\gamma}_{8}$ are chosen to lie in the subspaces $H\cap H_{1},H\cap H_{2},H\cap H_{3}$, respectively, and are selected to be infinitesimally close to $\gamma_{2},\gamma_{6},\gamma_{8}$.
\end{itemize}
It is easy to verify that $\widetilde{\gamma}$ satisfies the required conditions.

\smallskip
Since $\widetilde{\gamma}$ is an infinitesimal motion of $\gamma$, it suffices to show that $\widetilde{\gamma}\in V_{A}$. Given that $\{\widetilde{\gamma}_{3},\widetilde{\gamma}_{4},\widetilde{\gamma}_{1},\widetilde{\gamma}_{5},\widetilde{\gamma}_{7}\}$ forms a frame of $\CC^{4}$, by applying a suitable projective transformation, we may assume that $\widetilde{\gamma}$ is represented by the following matrix:
\begin{equation*}\label{matris 31}
\begin{pmatrix}
1 & 0 & 0 & 0 & 1 & x_{1} & x_{3} & x_{5}\\
0 & 1 & 0 & 0 & 1 & 0 & 0 & 0\\
0 & 0 & 1 & 0 & 1 & x_{2} & x_{4} & x_{5}\\
0 & 0 & 0 & 1 & 1 & x_{2} & x_{3} & x_{6}
\end{pmatrix},
\end{equation*}
where the columns correspond to the points $(1,3,5,7,4,2,6,8)$. Since the pairs of vectors $\{\widetilde{\gamma}_{1},\widetilde{\gamma}_{2}\},\{\widetilde{\gamma}_{5},\widetilde{\gamma}_{6}\}$ and $\{\widetilde{\gamma}_{7},\widetilde{\gamma}_{8}\}$ are linearly independent, it follows that $x_{2},x_{3},x_{5}\neq 0$. By rescaling the last three columns of the matrix by  $x_{1}^{-1},x_{2}^{-1},x_{3}^{-1}$, we obtain a matrix realizing the same matroid as $\widetilde{\gamma}$:
\begin{equation*}\label{matris 32}
\begin{pmatrix}
1 & 0 & 0 & 0 & 1 & x_{1}x_{2}^{-1} & 1 & 1\\
0 & 1 & 0 & 0 & 1 & 0 & 0 & 0\\
0 & 0 & 1 & 0 & 1 & 1 & x_{4}x_{3}^{-1} & 1\\
0 & 0 & 0 & 1 & 1 & 1 & 1 & x_{6}x_{5}^{-1}
\end{pmatrix},
\end{equation*}
where the columns correspond to the points $(1,3,5,7,4,2,6,8)$. We select $\epsilon$ infinitesimally close to $0$ and perturb $\widetilde{\gamma}$ infinitesimally to obtain $\gamma\rq$, represented by the following matrix:
\begin{equation*}\label{matris 33}
\begin{pmatrix}
1 & 0 & 0 & 0 & 1 & x_{1}x_{2}^{-1} & 1 & 1\\
0 & 1 & 0 & 0 & 1 & \epsilon & \epsilon & \epsilon\\
0 & 0 & 1 & 0 & 1 & 1 & x_{4}x_{3}^{-1} & 1\\
0 & 0 & 0 & 1 & 1 & 1 & 1 & x_{6}x_{5}^{-1}
\end{pmatrix},
\end{equation*}
where the columns correspond to the points $(1,3,5,7,4,2,6,8)$.
This matrix has the same structure as the one in \eqref{matrix 5}, up to exchanging the roles of the pairs $\{1,2\}$ and $\{3,4\}$. Hence, it represents a realization of $A$. Thus, 
 $\gamma \rq \in \Gamma_{A}$. Since $\gamma \rq$ represents an infinitesimal motion of $\gamma$, it follows that $\gamma\in V_{A}$. 
\end{proof}

\begin{proof}[{\bf Proof of Lemma~\ref{prop}.}] The inclusion $\supset$ is clear. To prove the reverse inclusion, let $\gamma$ be a tuple of vectors in $V_{C(M)}$. This tuple defines a matroid  $N(\gamma)\leq M$, for which $\gamma \in \Gamma_{N(\gamma)}$. We proceed by considering three cases:

\begin{itemize}
\item If $N(\gamma)=M$, then $\gamma \in V_{M}$.

\item If $N(\gamma)<M$ and $\Ccal_{i}(N)\supsetneq \Ccal_{i}(M)$ for some $i\in \{1,2,3\}$, then there exists $N\in \minposet{\cup_{i=1}^{3}\ABCD{i}}$ such that $N(\gamma)< N$, implying $\gamma \in V_{\Ccal(N)}$.

\item If $N(\gamma)< M$ and $\Ccal_{i}(N)= \Ccal_{i}(M)$ for each $i\in \{1,2,3\}$, then there exists $N\in \ABCD{4}$ such that $N(\gamma)=N$, implying $\gamma \in V_{N}$.
\end{itemize}
Therefore, the reverse inclusion is proven, completing the proof.
\end{proof}

\begin{proof}[{\bf Proof of Lemma~\ref{sec lem}.}] Let $N\in \mathcal{S}_{4}(M_{\text{V\'{a}mos}})$. By definition, $N$ is a paving matroid that satisfies $N<M_{\textup{V\'{a}mos}}$. To prove the lemma, we will prove the following inclusion: 
\begin{equation}\label{incl}
V_{N}\subset V_{A}\bigcup_{i=1}^{2}V_{\Ccal(B_{i})}\bigcup_{j=1}^{2}V_{\Ccal(C_{j})}.
\end{equation}
We consider the following cases:

\smallskip
{\bf Case~1.} Suppose two dependent hyperplanes in $M_{\text{V\'{a}mos}}$ are contained in the same dependent hyperplane of $N$. In this case, one of the following occurs:
\begin{itemize}
\item $\rank_{N}\{1,2,5,6,3,4\}\leq 3$.
\item $\rank_{N}\{1,2,5,6,7,8\}\leq 3$.
\item $\rank_{N}\{3,4,7,8,5,6\}\leq 3$.
\item $\rank_{N}\{3,4,7,8,1,2\}\leq 3$.
\end{itemize}
These cases imply that $N\leq B_{1}$, $N\leq B_{2}$, $N\leq C_{1}$ or $N\leq C_{2}$, respectively. Consequently, for the associated circuit varieties, we have $\textstyle{V_{\Ccal(N)}\subset \bigcup_{i=1}^{2}V_{\Ccal(B_{i})}\bigcup_{j=1}^{2}V_{\Ccal(C_{j})}},$
which implies the desired inclusion in \eqref{incl}.

\smallskip
{\bf Case~2.} Suppose there are no pairs of dependent hyperplanes in $M_{\text{V\'{a}mos}}$ that collapse into the same dependent hyperplane in  $N$. If $\{1,2,5,6\}$ is not a dependent set in $N$, then $N$ is not realizable, and the inclusion holds trivially. Now suppose $\{1,2,5,6\}$ is dependent in $N$, implying $N\leq A$.
By applying Theorem~\ref{poli} to $\text{Gr}(A,\CC)$, we observe that this space consists of all matrices of the form:

\begin{equation}\label{matrix}
\begin{pmatrix}
1 & 0 & 0 & 0 & p & 1 & 1 & w\\
0 & 1 & 0 & 0 & q & x & x & xw-xp+q\\
0 & 0 & 1 & 0 & r & y & 0 & r\\
0 & 0 & 0 & 1 & 1 & 0 & z & 1
\end{pmatrix},
\end{equation}
where the columns correspond to the points $(1,2,3,5,7,4,6,8)$, respectively, and all minors associated with the bases of $A$ are non-zero. Since $N$ is obtained from $A$ by adding or enlarging dependent hyperplanes, the space $\text{Gr}(N,\CC)$ is characterized by matrices of the same form, with the additional condition that the minors corresponding to the non-bases of $N$ vanish, while those corresponding to the bases of $N$ remain non-zero. To prove the inclusion in~\eqref{incl}, we will show that $V_{N}\subset V_{A}$. To see this, we will demonstrate that any $\gamma\in \Gamma_{N}$ can be infinitesimally perturbed to produce a tuple of vectors $\widetilde{\gamma}\in \Gamma_{A}$. Consider $\gamma\in \Gamma_{N}$. By applying a suitable linear transformation, we can assume that $\gamma$ takes the form of~\eqref{matrix}, with specific values
$\widetilde{p},\widetilde{q},\widetilde{r},\widetilde{x},\widetilde{y},\widetilde{z},\widetilde{w} \in \CC$.

Now, examine all minors of~\eqref{matrix} that correspond to the non-bases of $A$. These minors are polynomials in the variables  $p,q,r,x,y,z,w$, and they are non-zero. By infinitesimally perturbing $\widetilde{p},\widetilde{q},\widetilde{r},\widetilde{x},\widetilde{y},\widetilde{z},\widetilde{w}$ to ensure that these minors do not vanish, we obtain a tuple $\widetilde{\gamma}\in \Gamma_{A}$. This completes the proof.
\end{proof}

\begin{proof}[{\bf Proof of Lemma~\ref{not real}.}] We will show that the matroid $M_{\text{Steiner}}(5)$ is not realizable; the other cases follow by analogous arguments. Let $N$ denote this matroid, defined on the ground set $\{1,2,3,4,6,7,8\}$ with the following collection of dependent hyperplanes:
\[\{1,2,4,8\},\{3,4,6,8\},\{2,6,7,8\},\{1,3,7,8\},\{1,4,6,7\},\{1,2,3,6\},\{2,3,4,7\}.\]
Suppose by contrary that there exists $\gamma \in \Gamma_{N}$. Applying a suitable linear transformation, we may assume that $\gamma$ is represented by the matrix:
\begin{equation}\label{matrix 2}
\begin{pmatrix}
1 & 0 & 0 & 0 & x_{1} & x_{5} & x_{9} \\
0 & 1 & 0 & 0 & x_{2} & x_{6} & x_{10} \\
0 & 0 & 1 & 0 & x_{3} & x_{7} & x_{11} \\
0 & 0 & 0 & 1 & x_{4} & x_{8} & x_{12} 
\end{pmatrix},
\end{equation}
where the columns correspond to the points $(1,3,4,7,2,6,8)$, respectively. Since the sets $\{1,3,7,8\},\{1,4,7,6\},\{2,3,4,7\}$ are dependent in $N$, it follows that $x_{1}=x_{6}=x_{11}=0$. Furthermore, as the sets $\{1,3,4,2\},\{1,3,4,6\}$ and $\{1,3,5,8\}$ are independent in $N$, the corresponding minors are nonzero, which implies $x_{4},x_{8},x_{12}\neq 0$. By rescaling the last three columns, we may assume $x_{4}=x_{8}=x_{12}=1$. Additionally, since $\{1,2,3,6\}$ is dependent in $N$, the vanishing of the corresponding minor implies $x_{3}=x_{7}$. Thus, $\gamma$ takes the form:
\begin{equation}\label{matrix 21}
\begin{pmatrix}
1 & 0 & 0 & 0 & 0 & x & y \\
0 & 1 & 0 & 0 & q & 0 & z \\
0 & 0 & 1 & 0 & r & r & 0 \\
0 & 0 & 0 & 1 & 1 & 1 & 1 
\end{pmatrix},
\end{equation}
where the columns correspond to the points $(1,3,4,7,2,6,8)$.
The dependencies $\{1,2,4,8\},\{3,4,6,8\},\{2,6,7,8\}$ in $N$ yield the following vanishing conditions:
\[\det \begin{pmatrix}
q & z\\
1 & 1
\end{pmatrix}=
\det \begin{pmatrix}
x & y\\
1 & 1
\end{pmatrix}=
\det \begin{pmatrix}
0 & x & y\\
q & 0 & z\\
r & r & 0
\end{pmatrix}=0.
\]
The first two equalities give $z=q$ and $y=x$, which, when substituted into the third, yields $xqr=0$. Thus $x=0$, $q=0$ or $r=0$. 

\smallskip  
\noindent  
\textbf{Case $x=0$:} The vanishing of the minor on $\{3,4,6,7\}$ contradicts its independence. 

\smallskip  
\noindent  
\textbf{Case $q=0$:} The minor on $\{1,2,4,7\}$ vanishes, violating its independence in $N$.

\smallskip  
\noindent  
\textbf{Case $r=0$:} The vanishing of the minor on $\{1,3,6,7\}$ yields a similar contradiction.

\smallskip  
\noindent  
In all cases, we reach a contradiction, implying that $N$ is not realizable.
\end{proof}

\begin{proof}[{\bf Proof of Lemma~\ref{lema k}.}]

(i) The matroid $A$ corresponds to the point-line configuration defined by the set of lines in~\eqref{lines}. This configuration is commonly referred to as the $3 \times 3$ grid. As shown in~\cite{Fatemeh3}, the matroid and circuit varieties associated to $A$ coincide.

\smallskip
(ii) Consider the matroid $N$ obtained from $\K$ by identifying the points $\{5,6,8,9\}$, one of the matroids $B_i$. As the argument is analogous for all $B_i$, it suffices to show that 
\begin{equation}\label{vn2}
V_{\Ccal(N)}=V_{N}\subset V_{\K}.
\end{equation}
Since $N$ satisfies the conditions of Theorem~\ref{teo ir} (i), 
$V_{\Ccal(N)}=V_{N}$. To prove that $V_{N}\subset V_{\K}$, we will see that any $\gamma\in \Gamma_{N}$ can be perturbed infinitesimally to obtain a tuple of vectors realizing $\K$. Following the same procedure as in Theorem~\ref{poli}, we find that any realization of $N$ is projectively equivalent to the realization given by the matrix:

\begin{equation*}\label{matrix 2111}
\begin{pmatrix}
1 & 0 & 0 & 0 & 0 & 1 & 1 & 0 & 0 \\
0 & 1 & 0 & 0 & 0 & 1 & 0 & 0 & 0\\
0 & 0 & 1 & 0 & 0 & 0 & 1 & 0 & 0\\
0 & 0 & 0 & 1 & 1 & 0 & 0 & 1 & 1 
\end{pmatrix},
\end{equation*}
where the columns correspond to 
$(1,2,4,5,9,3,7,6,8)$. We select $\epsilon$ infinitesimally close to zero, and perturb $\gamma$ infinitesimally to obtain $\widetilde{\gamma}$, represented by the matrix:
\begin{equation*}\label{matrix 2112}
\begin{pmatrix}
1 & 0 & 0 & 0 & \epsilon & 1 & 1 & 0 & 0 \\
0 & 1 & 0 & 0 & \epsilon & 1 & 0 & 0 & \epsilon\\
0 & 0 & 1 & 0 & \epsilon & 0 & 1 & \epsilon & 0\\
0 & 0 & 0 & 1 & 1 & 0 & 0 & 1 & 1 
\end{pmatrix},
\end{equation*}
where the columns correspond to the points $(1,2,4,5,9,3,7,6,8)$. It is easy to verify that $\widetilde{\gamma}$ represents a realization of $\K$. Hence, it follows that $\gamma \in V_{\K}$.

\smallskip
(iii) Consider the matroid $N$ obtained from $\K$ by identifying the points lying within each pair $\{4,7\},\{5,8\}$ and $\{6,9\}$, which is one of the matroids $C_{i}$. Since the argument is analogous for all matroids $C_{i}$ it suffices to show that 
\begin{equation}\label{vn}
V_{\Ccal(N)}=V_{N}\subset V_{\K}.
\end{equation}
Since $N$ satisfies the conditions of Theorem~\ref{teo ir} (i), $V_{\Ccal(N)}=V_{N}$. To prove that $V_{N}\subset V_{\K}$, we will see that any $\gamma\in \Gamma_{N}$ can be perturbed infinitesimally to obtain a tuple of vectors realizing $\K$. Following the same procedure as in Theorem~\ref{poli}, we find that any realization of $N$ is projectively equivalent to the realization given by the matrix:

\begin{equation*}\label{matrix 2113}
\begin{pmatrix}
1 & 0 & 1 & 0 & 1 & 1 & 1 & 1 & 0 \\
0 & 1 & 0 & 1 & 1 & 1 & 0 & 1 & 1\\
0 & 0 & 1 & 0 & 1 & 0 & 1 & 1 & 0\\
0 & 0 & 0 & 1 & 1 & 0 & 0 & 1 & 1 
\end{pmatrix},
\end{equation*}
where the columns correspond to 
$(1,2,4,5,9,3,7,6,8)$. We select $\epsilon$ infinitesimally close to zero, and perturb $\gamma$ infinitesimally to obtain $\widetilde{\gamma}$, represented by the matrix:
\begin{equation*}\label{matrix 2114}
\begin{pmatrix}
1 & 0 & \epsilon & 0 & 1 & 1 & 1 & \epsilon & 0 \\
0 & 1 & 0 & \epsilon & 1 & 1 & 0 & \epsilon & 1\\
0 & 0 & 1 & 0 & 1 & 0 & 1 & 1 & 0\\
0 & 0 & 0 & 1 & 1 & 0 & 0 & 1 & 1 
\end{pmatrix},
\end{equation*}
where the columns correspond to the points $(1,2,4,5,9,3,7,6,8)$. It is easy to verify that $\widetilde{\gamma}$ represents a realization of $\K$. Hence, it follows that $\gamma \in V_{\K}$.

\smallskip
(iv) Let $N$ denote the matroid obtained from $\K$ by identifying the points within the pairs $\{2,3\}$ and $\{4,7\}$, where the points $\{2,3,4,5,6,7,8,9\}$ form a hyperplane. This matroid corresponds to one of the $D_{i}$, and is depicted in Figure~\ref{fig:11d}. Since the argument is analogous for all $D_{i}$, it suffices to show that 
\begin{equation}\label{vn 4}
V_{\Ccal(D_{i})}\subset V_{\K}\cup V_{A}.
\end{equation}
Consider the submatroid $N\rq$ of $N$ induced on the points $\{2,4,5,6,8,9\}$. Observe that $N\rq$ is isomorphic to the matroid $\text{QS}$ from Example~\ref{ej quad}. From this example, we know that 
$V_{\Ccal(N\rq)}=V_{N\rq}\cup V_{U_{2,6}}$. 
Thus,
$V_{\Ccal(N)}\subset V_{N}\cup V_{\Ccal(A)}=V_{N}\cup V_{A}$. 
To complete the proof, we show that $V_{N}\subset V_{\K}$. Specifically, we show that any realization $\gamma\in \Gamma_{N}$ can be infinitesimally perturbed to obtain a realization of $\K$. Following the procedure outlined in Theorem~\ref{poli}, any realization of $N$ is projectively equivalent to the realization given by the matrix:

\begin{equation*}\label{matrix 21131}
\begin{pmatrix}
0 & 0 & 0 & 0 & 0 & 0 & 0 & 0 & 1 \\
0 & 1 & 0 & 1 & 1 & 1 & 0 & 0 & 0\\
0 & 1 & 1 & 0 & 0 & 0 & 1 & 1 & 0\\
1 & 1 & 1 & 1 & 0 & 0 & 0 & 0 & 0 
\end{pmatrix},
\end{equation*}
where the columns correspond to the points $(5,9,6,8,2,3,4,7,1)$. To construct a realization of $\K$, we choose $z_{1},z_{2},z_{3},z_{4}\in \CC$ infinitesimally close to zero, and perturb $\gamma$ infinitesimally to obtain $\widetilde{\gamma}$, represented by the matrix
\begin{equation*}\label{matrix 211312}
\begin{pmatrix}
z_1 & z_2 & z_3 & z_4 & z_4-z_1 & z_2-z_3 & z_3-z_1 & z_2-z_4 & 1 \\
0 & 1 & 0 & 1 & 1 & 1 & 0 & 0 & 0\\
0 & 1 & 1 & 0 & 0 & 0 & 1 & 1 & 0\\
1 & 1 & 1 & 1 & 0 & 0 & 0 & 0 & 0 
\end{pmatrix},
\end{equation*}
where the columns correspond to the points $(5,9,6,8,2,3,4,7,1)$. It is easy to verify that $\widetilde{\gamma}$ represents a realization of $\K$. Hence, it follows that $\gamma \in V_{\K}$.

\smallskip
(v) The proof follows by applying the same arguments as in the previous lemmas.
\end{proof}

\vspace{-3mm}

\bibliographystyle{abbrv}

\bibliography{Citation}

\end{document}